\newtheorem{theorem}{Theorem}[section]
\newtheorem{conjecture}{Conjecture}[section]
\newtheorem{corollary}{Corollary}[theorem]
\newtheorem{lemma}[theorem]{Lemma}
\newtheorem{definition}{Definition}[section]
\theoremstyle{definition}
\theoremstyle{remark}
\newtheorem*{remark}{Remark}
\newcommand{\eps}{\varepsilon}
\title[Strict Inequalities]{Strict Inequalities for the $n$-crossing Number}
\author[N. Hagedorn]{Nicholas Hagedorn}
\begin{document}

\begin{abstract}

In 2013, Adams introduced the $n$-crossing number of a knot $K$, denoted by $c_n(K)$. Inequalities between the $2$-, $3$-, $4$-, and $5$-crossing numbers have been previously established. We prove $c_9(K)\leq c_3(K)-2$ for all knots $K$ that are not the trivial, trefoil, or figure-eight knot. We show this inequality is optimal and obtain previously unknown values of $c_9(K)$. We generalize this inequality to prove that $c_{13}(K) < c_{5}(K)$ for a certain set of classes of knots.
\end{abstract}

\maketitle

\section{Introduction}

In knot theory, a standard crossing in a knot diagram is when one strand passes over another. In \cite{adams2013}, Adams introduced an $n$-crossing projection, which is a knot diagram where at each crossing $n$ strands intersect. Every knot has an $n$-crossing projection for all $n\geq 2$. One can then define $c_n(K)$ as the smallest number of crossings an $n$-crossing projection of $K$ can contain.\\

Inequalities between these $n$-crossing numbers have been studied: The construction in Figure~\ref{increasebytwofirsttime} shows that every crossing of $n$ strands can be turned into one with $n+2$ strands. Thus, an $n$-crossing can be made into an $(n+2)$-crossing and $c_{n+2}(K)\leq c_{n}(K)$. Interestingly, $c_3(K)\leq c_2(K)-1$ for 2-braid knots $K$ and $c_3(K)\leq c_2(K)-2$ for all other non-trivial knots \cite{adams2013}. For a non-trivial knot $K$, $c_4(K)\leq c_2(K) - 1$ \cite{fourcrossing} and $c_5(K)\leq c_3(K)-1$ \cite{tripmoves}.\\

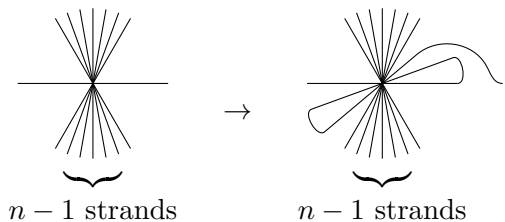
\begin{figure}[!htb]
        \center{
        \center{\begin{tabular}{lll}
    \raisebox{-.5\height}{
\begin{tikzpicture}
\coordinate (Center) at (0, 0);

\coordinate (1) at (0:1);
\coordinate (2) at (60:1);
\coordinate (3) at (120:1);
\coordinate (4) at (180:1);
\coordinate (5) at (240:1);
\coordinate (6) at (300:1);
\node at (Center) [below = 10mm of Center] {$\underbrace{\quad\quad}$};
\node at (Center) [below = 14mm of Center] {$n-1$ strands};

\coordinate(n-0) at (70:1);
\coordinate(n-1) at (80:1);
\coordinate(n-2) at (90:1);
\coordinate(n-3) at (100:1);
\coordinate(n-4) at (110:1);

\coordinate(n-5) at (250:1);
\coordinate(n-6) at (260:1);
\coordinate(n-7) at (270:1);
\coordinate(n-8) at (280:1);
\coordinate(n-9) at (290:1);

\draw (n-0) -- (n-5);
\draw (n-1) -- (n-6);
\draw (n-2) -- (n-7);
\draw (n-3) -- (n-8);
\draw (n-4) -- (n-9);

\draw (Center) -- (1);
\draw (Center) -- (2);
\draw (Center) -- (3);
\draw (Center) -- (4);
\draw (Center) -- (5);
\draw (Center) -- (6);

\end{tikzpicture}

} & $\to$  & \raisebox{-.5\height}{

\begin{tikzpicture}
\coordinate (Center) at (0, 0);

\coordinate (1) at (0:1);
\coordinate (2) at (60:1);
\coordinate (3) at (120:1);
\coordinate (4) at (180:1);
\coordinate (5) at (240:1);
\coordinate (6) at (300:1);
\coordinate (loop1) at (20:1);
\coordinate (loop2) at (200:1);
\coordinate (loop3) at (220:1);
\coordinate (loop4) at (40:0.6);
\coordinate (loop5) at (10:1.4);
\coordinate (loop6) at (0:1.6);

\node at (Center) [below = 10mm of Center] {$\underbrace{\quad\quad}$};
\node at (Center) [below = 14mm of Center] {$n-1$ strands};

\coordinate(n-0) at (70:1);
\coordinate(n-1) at (80:1);
\coordinate(n-2) at (90:1);
\coordinate(n-3) at (100:1);
\coordinate(n-4) at (110:1);

\coordinate(n-5) at (250:1);
\coordinate(n-6) at (260:1);
\coordinate(n-7) at (270:1);
\coordinate(n-8) at (280:1);
\coordinate(n-9) at (290:1);

\draw (n-0) -- (n-5);
\draw (n-1) -- (n-6);
\draw (n-2) -- (n-7);
\draw (n-3) -- (n-8);
\draw (n-4) -- (n-9);

\draw (Center) -- (1);
\draw (Center) -- (2);
\draw (Center) -- (3);
\draw (Center) -- (4);
\draw (Center) -- (5);
\draw (Center) -- (6);

\draw (1) to [out=0,in=20] (loop1) to (loop2) to [out=20+180, in=40+180] (loop3) to (loop4) to [out=40, in=180-60](loop5) to [out=-60, in=180](loop6);

\end{tikzpicture}}\\
\end{tabular}}
        }
        \caption{\label{increasebytwofirsttime} Turning an $n$-crossing into an $(n+2)$-crossing}
\end{figure}

Until now, specific $n$-crossing numbers for $n>5$ have not been studied. In Section~\ref{sectionclassification}, we deduce upper bounds on $c_9(K)$ for knots with certain 5-crossing projections and obtain new values of $c_9(K)$. We prove that $c_9(K)\leq c_3(K)-2$ for all knots $K$ that are not the trivial, trefoil, or figure-eight knot in Section~\ref{section9crossing}. We also show this inequality is optimal. In Section~\ref{section13crossing}, we classify all knots based on their crossing connections. We then show $c_{13}(K)<c_5(K)$ for a majority of these classifications.\\

\section{Classification of 5-crossing Knots}\label{sectionclassification}

Adams introduced the notion of a crossing covering circle in \cite{adams2013}. For some knot projection $P$, a crossing covering circle is a topological circle in the projection plane that only intersects $P$ at crossings, and, when it intersects a crossing, it passes straight through the crossing. We introduce two definitions that generalize this concept.

\begin{definition}[Crossing Segment]
Take an $n$-crossing knot projection $P$. Define a crossing segment to be a topological line segment in the projection plane that only intersects $P$ at crossings. When the crossing segment does intersect $P$, it must pass directly through the crossing, with $n$ strands of the crossing on either side of $P$. Lastly, a crossing segment cannot intersect a crossing more than once.
\end{definition}

\begin{definition}[Crossing Connected]
For a given $n$-crossing knot projection $P$, we say two faces in $P$ are crossing connected via $\alpha$ if they have common vertex $\alpha$ (a crossing) and can be connected by a crossing segment that does not intersect $\alpha$. Faces are adjacent crossing connected (ACC) via $\alpha$ if they are crossing connected via $\alpha$ and both border a strand that connects $\alpha$ to another crossing. Faces are opposite crossing connected (OCC) via $\alpha$ if they are crossing connected via $\alpha$ and are on opposite sides of $\alpha$.
\end{definition}

Note that the crossing segment connecting two opposite crossing connected faces is equivalent to a crossing covering circle by extending the crossing segment through the crossing. Next, we will need the following lemma to gain insights into ACC and OCC faces.

\begin{lemma}\label{2colorable}
If all vertices of a planar graph have even degrees, then the graph's dual graph is $2$-colorable.
\end{lemma}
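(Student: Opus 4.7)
The plan is to show that the dual graph $G^*$ is bipartite, which is equivalent to being $2$-colorable. Since a graph is bipartite if and only if it contains no odd cycle, it suffices to prove that every simple cycle in $G^*$ has even length.

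The main tool is planar cycle--cut duality. Given any simple cycle $C^*$ in $G^*$, I would realize it as a simple closed curve $\gamma$ in the plane that crosses each primal edge corresponding to an edge of $C^*$ transversally and exactly once, and avoids all primal vertices. By the Jordan curve theorem, $\gamma$ separates the plane into two regions; let $S\subseteq V(G)$ be the set of primal vertices lying in one of them. The primal edges crossed by $\gamma$ are precisely those in the edge cut $\delta(S)=\{uv\in E(G):|\{u,v\}\cap S|=1\}$, so $|C^*|=|\delta(S)|$.

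Next, I would bound $|\delta(S)|$ using the parity hypothesis. Double-counting edge-endpoints at vertices of $S$ gives
\[
\sum_{v\in S}\deg(v) \;=\; 2\,e(S)\,+\,|\delta(S)|,
\]
where $e(S)$ is the number of edges with both endpoints in $S$. Every $\deg(v)$ is even, so the left-hand side is even, forcing $|\delta(S)|$, and hence $|C^*|$, to be even. Thus every simple cycle of $G^*$ has even length, so $G^*$ is bipartite and admits a $2$-coloring.

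The main obstacle is handling degenerate features of the dual construction: a loop would arise in $G^*$ from any cut edge (bridge) of $G$, and one must also worry about disconnected primals. However, if $e=uv$ were a cut edge in a connected component $H$ of $G$, then deleting $e$ would leave the piece of $H$ containing $u$ with exactly one odd-degree vertex, contradicting the handshake lemma applied to that piece. Hence $G$ has no cut edges, $G^*$ has no loops, and the parity argument applies component-by-component to yield the desired $2$-coloring.
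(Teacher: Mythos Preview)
Your proof is correct. Both you and the paper reduce to showing that the dual $G^*$ contains no odd cycle, but the executions differ. The paper takes a hypothetical minimum odd cycle in $G^*$, overlays it on the drawing of $G$, and builds an auxiliary planar graph $H$ whose vertices are the dual-cycle vertices $d_i$, the points $h_i$ where dual-cycle edges cross primal edges, and the primal vertices enclosed by the cycle; each $h_i$ then has degree $3$ in $H$, so the handshaking lemma forces some enclosed primal vertex to have odd degree, a contradiction. Your argument reaches the same parity contradiction through the standard cycle--cut duality: a simple dual cycle determines a primal cut $\delta(S)$, and $\sum_{v\in S}\deg(v)=2e(S)+|\delta(S)|$ forces $|\delta(S)|$ even. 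This is essentially the same count packaged in cleaner language, avoiding the explicit auxiliary-graph construction; you also dispose of the loop case (bridges in $G$) explicitly, which the paper's argument leaves implicit.
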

\begin{proof}
Let $D$, the dual graph of the planar graph $G$, be embedded in the same plane as $G$. Since a graph is 2-colorable precisely when there does not exist an odd cycle, it is enough to show that $D$ does not contain an odd cycle. We will assume such a cycle exists, and take $(d_1, d_2, \dots, d_n, d_1)$ to be the cycle in $D$ with minimum odd $n$. Consider the union of cycle $(d_1, d_2, \dots, d_n, d_1)$ with $G$. Let vertex $h_i$ denote the intersection of the edge $(d_i, d_{i+1})$ with $G$ for $1\leq i<n$, and let $h_n$ denote the intersection of edge $(d_n, d_1)$ with $G$. \\

We now define a new undirected graph $H$. Let the vertices of $H$ consist of the vertices $d_i$, $h_i$, $1\leq i\leq n$, and all the vertices of $G$ that lie inside the cycle $d=(d_1, d_2, \dots, d_n, d_1)$. Each vertex $h_i$ divides an edge of $d$ into two edges $e_{i,1}$, $e_{i,2}$.  It also divides an edge of $G$ into two parts, one of which lies inside of $d$. This edge will be called $e_{i,3}$. Let the edges of $H$ consist of the edges $e_{i,j}$ and the edges of $G$ lying inside $d$. \\

The handshaking lemma gives that $\sum_{v\in H}\deg(v)$ is even. But each vertex $h_i$ has degree 3. With an odd number of vertices $h_i$, at least one of the vertices $v\in G$ must have an odd degree. This violates the given, the desired contradiction.
\end{proof}

\begin{lemma}\label{oddcrossinters}
Let $n$ be odd. In an $n$-crossing projection of a knot, a crossing segment connecting two ACC or OCC faces passes through an odd number of crossings.
\end{lemma}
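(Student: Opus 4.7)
The plan is to apply Lemma~\ref{2colorable} to $P$, regarded as a planar graph in which every crossing is a vertex of degree $2n$. Since $2n$ is even, the dual graph is $2$-colorable; I fix a $2$-coloring of the faces of $P$ and use the standard fact that two faces have different colors if and only if any curve between them in general position with respect to $P$ crosses an odd number of edges.

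Next I would determine the colors of $F_1$ and $F_2$, the two faces joined by the crossing segment. The $n$ strands through $\alpha$ divide a neighborhood of $\alpha$ into $2n$ sectors. In the ACC case, $F_1$ and $F_2$ both border the same strand leaving $\alpha$, so they correspond to adjacent sectors separated by a single edge, and therefore receive different colors. In the OCC case, $F_1$ and $F_2$ are opposite sectors at $\alpha$, separated on each side by $n$ edges; a short arc around $\alpha$ from $F_1$ to $F_2$ meets an odd number of edges (since $n$ is odd), so again $F_1$ and $F_2$ receive different colors.

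I would then translate the crossing segment into an edge-crossing count. Let $k$ denote the number of crossings the segment passes through. At each such crossing $\beta$, perturb the segment slightly so that it is pushed just off one side of $\beta$; the definition of crossing segment guarantees that exactly $n$ strand-pieces lie on each side of $\beta$, so the local perturbation replaces the passage through $\beta$ with exactly $n$ transverse edge-crossings. The perturbed curve from $F_1$ to $F_2$ thus meets $P$ transversely in exactly $kn$ points. Combining with the previous paragraph, $kn$ must be odd, and since $n$ is odd this forces $k$ to be odd.

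The main conceptual step is the OCC case of the coloring analysis, where one must use the ``$n$ strands on each side'' clause in the definition of a crossing segment to pin down that opposite sectors at $\alpha$ are separated by exactly $n$ edges; after that, the proof is essentially bookkeeping on parities and the $2$-colorability of the dual graph.
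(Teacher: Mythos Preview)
Your proposal is correct and follows essentially the same approach as the paper: apply Lemma~\ref{2colorable} to obtain a checkerboard coloring, verify that ACC and OCC faces at $\alpha$ receive different colors (using that adjacent sectors differ by one edge and opposite sectors differ by $n$ edges with $n$ odd), and then observe that each passage of the crossing segment through a crossing flips the color. The only difference is cosmetic: where the paper says in one line that ``face color alternates when the crossing segment passes through a crossing,'' you make this explicit via a perturbation that converts each passage into $n$ transverse edge-crossings and then argue with the parity of $kn$.
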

\begin{proof}
Every crossing in an $n$-crossing knot projection has degree $2n$. So, Lemma \ref{2colorable} gives that there exists a checkerboard coloring for the faces of any $n$-crossing knot projection. It is easy to see that opposite and adjacent sides are different colors for odd $n$. And since, for odd $n$, face color alternates when the crossing segment passes through a crossing, a crossing segment connecting two ACC or OCC faces passes through an odd number of crossings.
\end{proof}

\begin{theorem}\label{accocctheorem}
Given an $n$-crossing projection of knot $K$ with odd $n$ and two adjacent or opposite crossing connected faces, there exists a $(2n-1)$-crossing diagram of $K$ with one fewer crossing.
\end{theorem}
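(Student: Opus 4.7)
The plan is to exhibit the $(2n-1)$-crossing diagram by an explicit isotopy built out of the crossing segment supplied by the hypothesis. Write $F_1,F_2$ for the two faces crossing-connected via $\alpha$ and let $\gamma$ be a connecting crossing segment; by Lemma~\ref{oddcrossinters}, $\gamma$ passes through an odd number of crossings $c_1,\dots,c_{2m+1}$, which I list in order along $\gamma$ starting from $F_1$.

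I first handle the OCC case. Since $F_1$ and $F_2$ sit on opposite sides of $\alpha$, I close $\gamma$ off by an arc through $\alpha$ to obtain a simple closed crossing-covering curve $\Gamma$ passing through $\alpha$ and each $c_i$. Consider the sub-arc of $\gamma$ between $\alpha$ and the nearest crossing $c_1$: it lies entirely in one face of the projection. Along this sub-arc I perform an ambient isotopy of the plane that slides $\alpha$ until its crossing point coincides with that of $c_1$, dragging the $n$ strands incident at $\alpha$ alongside. Because at $c_1$ the segment $\gamma$ has $n$ strand-endpoints on either side, and the slide moves $\alpha$'s incident strands in parallel across that single face, no new crossings with the rest of the diagram are introduced. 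When $\alpha$ and $c_1$ have been superimposed, their strands coalesce into a single multi-crossing; tracking the $2n$ incident strands, exactly one pair aligns along $\gamma$ and collapses, producing a crossing with $n+n-1=2n-1$ strands, while every other crossing of the projection is left alone.

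The ACC case reduces to the same picture. The common bordering strand $s=\alpha\beta$ forces $\beta\in\{c_1,\dots,c_{2m+1}\}$: the segment $\gamma$ travels from $F_1$ to $F_2$, which lie on opposite sides of $s$, without passing through $\alpha$, so it must meet $s$ at a crossing, and the only crossings lying on $s$ are $\alpha$ and $\beta$. Now $\beta$ plays the role of the nearest target, and the same sliding isotopy --- carried out along the portion of $\gamma$ between $\alpha$ and $\beta$, aided by the bounding strand $s$ --- merges $\alpha$ with $\beta$ into a $(2n-1)$-crossing.

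The main obstacle is justifying the strand count at the merged crossing, namely that exactly one pair of strands collapses into a single strand while the other $2n-2$ remain distinct. The parity hypotheses --- oddness of $n$ together with the odd number of crossings on $\gamma$ from Lemma~\ref{oddcrossinters} --- are precisely what force the alignment of the $n$ strands of $\alpha$ with those of the target crossing to be consistent across $\gamma$; with the opposite parity the strands would no longer match up and one would instead get a double cancellation or none at all. A secondary but genuine point is checking that the isotopy preserves the knot type and leaves the crossings $c_2,\dots,c_{2m+1}$ untouched; both follow from taking the isotopy to be supported in a disk neighborhood of the sub-arc between $\alpha$ and the target crossing.
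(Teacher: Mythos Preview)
Your construction has a fundamental gap: it does not produce a $(2n-1)$-crossing diagram. You merge $\alpha$ with a single neighboring crossing ($c_1$ in the OCC case, $\beta$ in the ACC case) and then explicitly state that the isotopy ``leaves the crossings $c_2,\dots,c_{2m+1}$ untouched.'' But those crossings, and every crossing of the diagram not on $\gamma$, are still $n$-crossings. The theorem demands a diagram in which \emph{every} crossing has $2n-1$ strands, so your output is a mixed diagram, not a $(2n-1)$-projection. You never explain how the remaining $n$-crossings get upgraded.

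The merge step itself is also problematic. Superimposing the crossing point of $\alpha$ onto that of $c_1$ by an ambient planar isotopy generically puts $2n$ strands through one point, not $2n-1$; your claim that ``exactly one pair aligns along $\gamma$ and collapses'' is not justified, since $\gamma$ is transverse to every strand at both crossings and there is no strand lying along $\gamma$ to collapse. Moreover, because your move is purely local between $\alpha$ and its nearest neighbor, the global parity of the number of crossings on $\gamma$ plays no role---yet you invoke it as ``precisely what force the alignment.'' That the parity hypothesis is idle in your argument is a sign that the mechanism is wrong.

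The paper's argument is different in kind: rather than merging $\alpha$ with one crossing, it selects $n-1$ of the $n$ strands at $\alpha$ and drags that entire bundle along the \emph{whole} crossing segment $\gamma$, threading it through each of $c_1,\dots,c_{2m+1}$ in turn. This simultaneously dissolves $\alpha$ (the one remaining strand is no longer a crossing) and promotes every $c_i$ from an $n$-crossing to an $(n+(n-1))$-crossing. Here the oddness of $2m+1$ is genuinely needed: it guarantees that after passing through all the $c_i$ the bundle lands in the correct face so that the move closes up in the ACC or OCC configuration. The crossings not lying on $\gamma$ are then handled separately by applying the trivial $n\to n+2$ loop move $\tfrac{n-1}{2}$ times each. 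Your proposal contains neither the global strand-pull nor the clean-up step, and both are essential.
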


\begin{proof}
Regardless of whether there is an ACC or OCC, we know the crossing segment intersects an odd number of crossings due to Lemma~\ref{oddcrossinters}. Then, if two faces are ACC via a crossing $\alpha$, we can pull the $n-1$ strands of $\alpha$ that don't border both ACC faces around the crossing segment. This move is shown in Figure~\ref{accineq} for the $n=5$ case. Similarly, if two faces are OCC via a crossing $\alpha$, we can pull the $n-2$ strands of $\alpha$ that don't border both OCC faces and one of $\alpha$'s two strands that do border $\alpha$ around the crossing segment. This move is shown in Figure~\ref{occineq} for the $n=5$ case. Both moves eliminate a crossing while turning the $n$-crossings along the crossing segment into $(2n-1)$-crossings. We can then perform the move shown in Figure~\ref{increasebytwofirsttime} a total of $\frac{n-1}{2}$ times on all remaining crossings to convert them from $n$-crossings to $(2n-1)$-crossings. This construction provides a $(2n-1)$-projection of $K$ with one fewer crossing than the original $n$-crossing projection.
\end{proof}

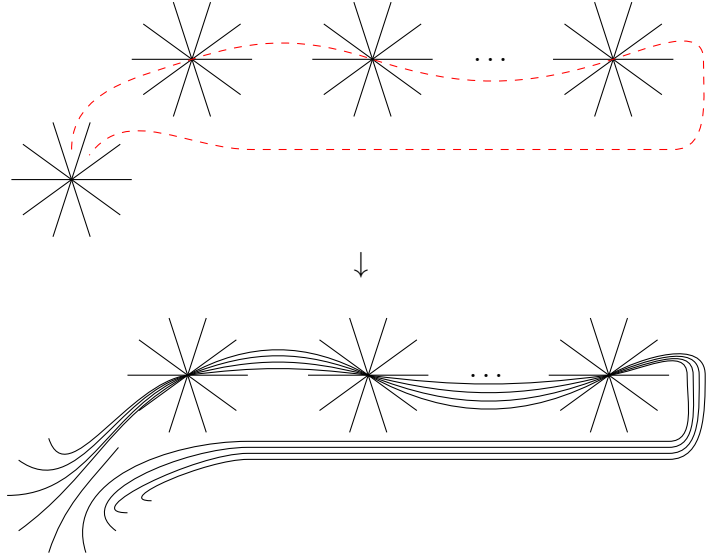
\begin{figure}[!htb]
        \center{
\begin{tikzpicture}[scale=0.8]

\foreach \position in {-3, 0, 4} {
    \coordinate (Center-\position) at (0+\position, 0);

    \coordinate (1-\position) at (0.809+\position, 0.588);
    \coordinate (2-\position) at (0.309+\position, 0.951);
    \coordinate (3-\position) at (-0.309+\position, 0.951);
    \coordinate (4-\position) at (-0.809+\position,0.588);
    \coordinate (5-\position) at (-1+\position,0);
    \coordinate (6-\position) at (-0.809+\position,-.588);
    \coordinate (7-\position) at (-0.309+\position,-.951);
    \coordinate (8-\position) at (0.309+\position,-.951);
    \coordinate (9-\position) at (0.809+\position,-.588);
    \coordinate (10-\position) at (1+\position,0);

    \draw (Center-\position) -- (1-\position);
    \draw (Center-\position) -- (2-\position);
    \draw (Center-\position) -- (3-\position);
    \draw (Center-\position) -- (4-\position);
    \draw (Center-\position) -- (5-\position);
    \draw (Center-\position) -- (6-\position);
    \draw (Center-\position) -- (7-\position);
    \draw (Center-\position) -- (8-\position);
    \draw (Center-\position) -- (9-\position);
    \draw (Center-\position) -- (10-\position);
}
\foreach \position in {-5}{
    \coordinate (Center-\position) at (0+\position, 0-2);

    \coordinate (1-\position) at (0.809+\position, 0.588-2);
    \coordinate (2-\position) at (0.309+\position, 0.951-2);
    \coordinate (3-\position) at (-0.309+\position, 0.951-2);
    \coordinate (4-\position) at (-0.809+\position,0.588-2);
    \coordinate (5-\position) at (-1+\position,0-2);
    \coordinate (6-\position) at (-0.809+\position,-.588-2);
    \coordinate (7-\position) at (-0.309+\position,-.951-2);
    \coordinate (8-\position) at (0.309+\position,-.951-2);
    \coordinate (9-\position) at (0.809+\position,-.588-2);
    \coordinate (10-\position) at (1+\position,0-2);
    
    \draw (Center-\position) -- (1-\position);
    \draw (Center-\position) -- (2-\position);
    \draw (Center-\position) -- (3-\position);
    \draw (Center-\position) -- (4-\position);
    \draw (Center-\position) -- (5-\position);
    \draw (Center-\position) -- (6-\position);
    \draw (Center-\position) -- (7-\position);
    \draw (Center-\position) -- (8-\position);
    \draw (Center-\position) -- (9-\position);
    \draw (Center-\position) -- (10-\position);
}

\node at ($(Center-0)!.5!(Center-4)$) {\ldots};

\draw [dashed, red] (-5, -2 + 0.5) to [out=90, in=180+18] (Center--3) to [out=18, in=180-18] (Center-0) to [out=-18, in=180+18] (Center-4) to [out=18, in=90] (5.5, 0) to [out=270, in=0] (5, -1.5) to (-2, -1.5) to [out=180, in=54] (0.2938-5, 0.4045-2);

\end{tikzpicture}

$\downarrow$\\
\vspace{5mm}

\begin{tikzpicture}[scale=0.8]

\foreach \position in {-3, 0, 4} {
    \coordinate (Center-\position) at (0+\position, 0);

    \coordinate (1-\position) at (0.809+\position, 0.588);
    \coordinate (2-\position) at (0.309+\position, 0.951);
    \coordinate (3-\position) at (-0.309+\position, 0.951);
    \coordinate (4-\position) at (-0.809+\position,0.588);
    \coordinate (5-\position) at (-1+\position,0);
    \coordinate (6-\position) at (-0.809+\position,-.588);
    \coordinate (7-\position) at (-0.309+\position,-.951);
    \coordinate (8-\position) at (0.309+\position,-.951);
    \coordinate (9-\position) at (0.809+\position,-.588);
    \coordinate (10-\position) at (1+\position,0);

    \draw (Center-\position) -- (1-\position);
    \draw (Center-\position) -- (2-\position);
    \draw (Center-\position) -- (3-\position);
    \draw (Center-\position) -- (4-\position);
    \draw (Center-\position) -- (5-\position);
    \draw (Center-\position) -- (6-\position);
    \draw (Center-\position) -- (7-\position);
    \draw (Center-\position) -- (8-\position);
    \draw (Center-\position) -- (9-\position);
    \draw (Center-\position) -- (10-\position);
}
\foreach \position in {-5}{
    \coordinate (Center-\position) at (0+\position, 0-2);

    \coordinate (1-\position) at (0.809+\position, 0.588-2);
    \coordinate (2-\position) at (0.309+\position, 0.951-2);
    \coordinate (3-\position) at (-0.309+\position, 0.951-2);
    \coordinate (4-\position) at (-0.809+\position,0.588-2);
    \coordinate (5-\position) at (-1+\position,0-2);
    \coordinate (6-\position) at (-0.809+\position,-.588-2);
    \coordinate (7-\position) at (-0.309+\position,-.951-2);
    \coordinate (8-\position) at (0.309+\position,-.951-2);
    \coordinate (9-\position) at (0.809+\position,-.588-2);
    \coordinate (10-\position) at (1+\position,0-2);
    
    \draw (7-\position) to [out=72, in=180+50] (0.85+\position, 0.798-2);
}

\node at ($(Center-0)!.5!(Center-4)$) {\ldots};

\draw (-0.309*1-5, 0.951*1-2) to [out=-72, in=180+7.2] (Center--3) to [out=7.2, in=180-7.2] (Center-0) to [out=-7.2, in=180+7.2] (Center-4) to [out=7.2, in=90] (5.3, 0) to [out=270, in=0] (5, -1.1) to (-2, -1.1) to [out=180, in=180-72] (8--5);

\draw (-0.809*1-5, 0.588*1-2) to [out=-36, in=180+14.4] (Center--3) to [out=14.4, in=180-14.4] (Center-0) to [out=-14.4, in=180+14.4] (Center-4) to [out=14.4, in=90] (5.4, 0) to [out=270, in=0] (5, -1.2) to (-2, -1.2) to [out=180, in=180-36] (9--5);

\draw (-1*1-5, 0*1-2) to [out=0, in=180+21.6] (Center--3) to [out=21.6, in=180-21.6] (Center-0) to [out=-21.6, in=180+21.6] (Center-4) to [out=21.6, in=90] (5.5, 0) to [out=270, in=0] (5, -1.3) to (-2, -1.3) to [out=180, in=180] (1-5, -0.288-2);

\draw (-0.809*1-5, -0.588*1-2) to [out=36, in=180+28.8] (Center--3) to [out=28.8, in=180-28.8] (Center-0) to [out=-28.8, in=180+28.8] (Center-4) to [out=28.8, in=90] (5.6, 0) to [out=270, in=0] (5, -1.4) to (-2, -1.4) to [out=180, in=180] (1.4-5, -0.088-2);

\end{tikzpicture}
        }
        \caption{\label{accineq} An $n$-crossing projection with two ACC faces can be transformed into an $(n+2)$-crossing projection with one fewer crossing}
\end{figure}

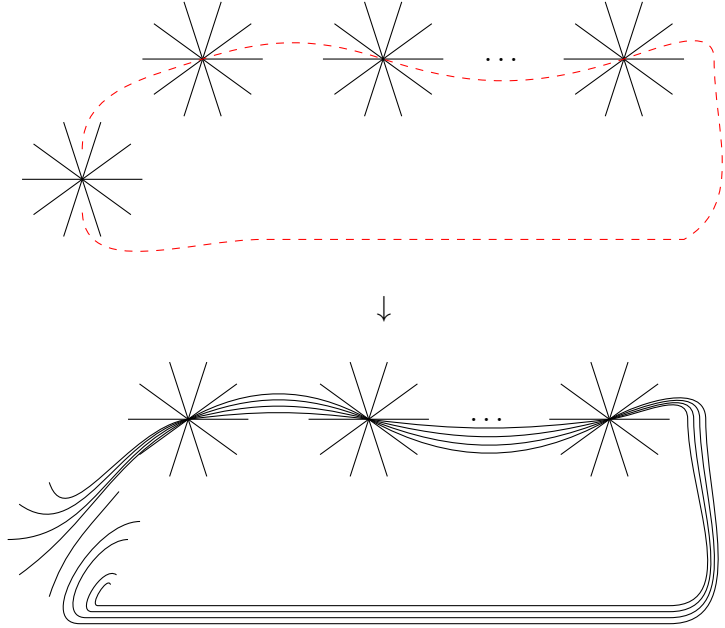
\begin{figure}[!htb]
        \center{

\begin{tikzpicture}[scale=0.8]

\foreach \position in {-3, 0, 4} {
    \coordinate (Center-\position) at (0+\position, 0);

    \coordinate (1-\position) at (0.809+\position, 0.588);
    \coordinate (2-\position) at (0.309+\position, 0.951);
    \coordinate (3-\position) at (-0.309+\position, 0.951);
    \coordinate (4-\position) at (-0.809+\position,0.588);
    \coordinate (5-\position) at (-1+\position,0);
    \coordinate (6-\position) at (-0.809+\position,-.588);
    \coordinate (7-\position) at (-0.309+\position,-.951);
    \coordinate (8-\position) at (0.309+\position,-.951);
    \coordinate (9-\position) at (0.809+\position,-.588);
    \coordinate (10-\position) at (1+\position,0);

    \draw (Center-\position) -- (1-\position);
    \draw (Center-\position) -- (2-\position);
    \draw (Center-\position) -- (3-\position);
    \draw (Center-\position) -- (4-\position);
    \draw (Center-\position) -- (5-\position);
    \draw (Center-\position) -- (6-\position);
    \draw (Center-\position) -- (7-\position);
    \draw (Center-\position) -- (8-\position);
    \draw (Center-\position) -- (9-\position);
    \draw (Center-\position) -- (10-\position);
}
\foreach \position in {-5}{
    \coordinate (Center-\position) at (0+\position, 0-2);

    \coordinate (1-\position) at (0.809+\position, 0.588-2);
    \coordinate (2-\position) at (0.309+\position, 0.951-2);
    \coordinate (3-\position) at (-0.309+\position, 0.951-2);
    \coordinate (4-\position) at (-0.809+\position,0.588-2);
    \coordinate (5-\position) at (-1+\position,0-2);
    \coordinate (6-\position) at (-0.809+\position,-.588-2);
    \coordinate (7-\position) at (-0.309+\position,-.951-2);
    \coordinate (8-\position) at (0.309+\position,-.951-2);
    \coordinate (9-\position) at (0.809+\position,-.588-2);
    \coordinate (10-\position) at (1+\position,0-2);
    
    \draw (Center-\position) -- (1-\position);
    \draw (Center-\position) -- (2-\position);
    \draw (Center-\position) -- (3-\position);
    \draw (Center-\position) -- (4-\position);
    \draw (Center-\position) -- (5-\position);
    \draw (Center-\position) -- (6-\position);
    \draw (Center-\position) -- (7-\position);
    \draw (Center-\position) -- (8-\position);
    \draw (Center-\position) -- (9-\position);
    \draw (Center-\position) -- (10-\position);
}

\node at ($(Center-0)!.5!(Center-4)$) {\ldots};

\draw [dashed, red] (-5, -2 + 0.5) to [out=90, in=180+18] (Center--3) to [out=18, in=180-18] (Center-0) to [out=-18, in=180+18] (Center-4) to [out=18, in=90] (5.5, 0) to [out=270, in=30] (5, -3) to (-2, -3) to [out=180, in=270] (-5, -2 - 0.5);

\end{tikzpicture}

$\downarrow$\\
\vspace{5mm}

\begin{tikzpicture}[scale=0.8]

\foreach \position in {-3, 0, 4} {
    \coordinate (Center-\position) at (0+\position, 0);

    \coordinate (1-\position) at (0.809+\position, 0.588);
    \coordinate (2-\position) at (0.309+\position, 0.951);
    \coordinate (3-\position) at (-0.309+\position, 0.951);
    \coordinate (4-\position) at (-0.809+\position,0.588);
    \coordinate (5-\position) at (-1+\position,0);
    \coordinate (6-\position) at (-0.809+\position,-.588);
    \coordinate (7-\position) at (-0.309+\position,-.951);
    \coordinate (8-\position) at (0.309+\position,-.951);
    \coordinate (9-\position) at (0.809+\position,-.588);
    \coordinate (10-\position) at (1+\position,0);

    \draw (Center-\position) -- (1-\position);
    \draw (Center-\position) -- (2-\position);
    \draw (Center-\position) -- (3-\position);
    \draw (Center-\position) -- (4-\position);
    \draw (Center-\position) -- (5-\position);
    \draw (Center-\position) -- (6-\position);
    \draw (Center-\position) -- (7-\position);
    \draw (Center-\position) -- (8-\position);
    \draw (Center-\position) -- (9-\position);
    \draw (Center-\position) -- (10-\position);
}
\foreach \position in {-5}{
    \coordinate (Center-\position) at (0+\position, 0-2);

    \coordinate (1-\position) at (0.809+\position, 0.588-2);
    \coordinate (2-\position) at (0.309+\position, 0.951-2);
    \coordinate (3-\position) at (-0.309+\position, 0.951-2);
    \coordinate (4-\position) at (-0.809+\position,0.588-2);
    \coordinate (5-\position) at (-1+\position,0-2);
    \coordinate (6-\position) at (-0.809+\position,-.588-2);
    \coordinate (7-\position) at (-0.309+\position,-.951-2);
    \coordinate (8-\position) at (0.309+\position,-.951-2);
    \coordinate (9-\position) at (0.809+\position,-.588-2);
    \coordinate (10-\position) at (1+\position,0-2);
    
    \draw (7-\position) to [out=72, in=180+50] (0.85+\position, 0.798-2);
}

\node at ($(Center-0)!.5!(Center-4)$) {\ldots};

\draw (-0.309*1-5, 0.951*1-2) to [out=-72, in=180+7.2] (Center--3) to [out=7.2, in=180-7.2] (Center-0) to [out=-7.2, in=180+7.2] (Center-4) to [out=7.2, in=90] (5.3, 0) to [out=270, in=0] (5, -3.1) to (-4.5, -3.1) to [out=180, in=180-72] (0.709-5,-.751-2);

\draw (-0.809*1-5, 0.588*1-2) to [out=-36, in=180+14.4] (Center--3) to [out=14.4, in=180-14.4] (Center-0) to [out=-14.4, in=180+14.4] (Center-4) to [out=14.4, in=90] (5.4, 0) to [out=270, in=0] (5, -3.2) to (-4.6, -3.2) to [out=180, in=180-36] (9--5);

\draw (-1*1-5, 0*1-2) to [out=0, in=180+21.6] (Center--3) to [out=21.6, in=180-21.6] (Center-0) to [out=-21.6, in=180+21.6] (Center-4) to [out=21.6, in=90] (5.5, 0) to [out=270, in=0] (5, -3.3) to (-4.7, -3.3) to [out=180, in=180] (10--5);

\draw (-0.809*1-5, -0.588*1-2) to [out=36, in=180+28.8] (Center--3) to [out=28.8, in=180-28.8] (Center-0) to [out=-28.8, in=180+28.8] (Center-4) to [out=28.8, in=90] (5.6, 0) to [out=270, in=0] (5, -3.4) to (-4.8, -3.4) to [out=180, in=180] (-3.8,-1.7);

\end{tikzpicture}
        }
        \caption{\label{occineq} An $n$-crossing projection with two OCC faces can be transformed into an $(n+2)$-crossing projection with one fewer crossing}
\end{figure}

Limiting the $n$-crossing projection of $K$ in Theorem~\ref{accocctheorem} to an $n$-crossing projection with $c_n(K)$ crossings, we achieve the following result:

\begin{corollary}\label{accoccremove}
For odd $n$ and knot $K$ with $c_n(K)>1$, if there exists a minimal $n$-crossing projection with two adjacent or opposite crossing connected faces, then $c_{2n-1}(K)<c_n(K)$.
\end{corollary}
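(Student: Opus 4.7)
The plan is to apply Theorem~\ref{accocctheorem} directly to a crossing-minimal $n$-projection. First I would fix an $n$-crossing projection $P$ of $K$ realizing the minimum, so that $P$ has exactly $c_n(K)$ crossings, and invoke the hypothesis that $P$ contains a pair of adjacent or opposite crossing connected faces. Feeding $P$ into Theorem~\ref{accocctheorem} then yields a $(2n-1)$-crossing diagram of $K$ with exactly $c_n(K)-1$ crossings. Since $c_{2n-1}(K)$ is by definition the minimum number of crossings over all $(2n-1)$-crossing projections of $K$, this witness immediately gives $c_{2n-1}(K)\leq c_n(K)-1<c_n(K)$.

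The role of the hypothesis $c_n(K)>1$ is largely bookkeeping: it ensures that the output diagram has a nonnegative number of crossings and keeps the inequality meaningful. It also rules out a degenerate case that could not arise in any event, since if $c_n(K)=1$ the single crossing $\alpha$ has no strand connecting it to another crossing (so ACC fails), and any crossing segment avoiding $\alpha$ would meet zero crossings, contradicting the odd-parity conclusion of Lemma~\ref{oddcrossinters} (so OCC also fails).

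There is essentially no obstacle here: the geometric work of trading one $n$-crossing for a configuration of $(2n-1)$-crossings with one fewer vertex was already carried out in Theorem~\ref{accocctheorem}. The corollary is a one-line specialization, obtained by instantiating that theorem at a minimal projection and reading off the resulting strict inequality.
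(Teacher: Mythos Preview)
Your proposal is correct and matches the paper's own approach exactly: the paper obtains the corollary simply by specializing Theorem~\ref{accocctheorem} to a minimal $n$-crossing projection, yielding a $(2n-1)$-crossing diagram with $c_n(K)-1$ crossings. Your additional remarks on why $c_n(K)>1$ is needed are a nice elaboration but not something the paper spells out.
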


Though the above results are generalized for $n$-crossing projections of any odd $n$, the next part of Section~\ref{sectionclassification} will focus on results specific to the $5$- and $9$- crossing numbers. In addition, since $c_{n+2}(K)\leq c_n(K)$ for all $n$, we know that $c_9(K)\leq c_5(K)$ for any knot $K$. So, we are primarily interested in determining if there exist any knots $K$ such that $c_9(K)=c_5(K)$.

\begin{definition}[Adjoined Bigon]
Define an adjoined bigon to be a polygon with one vertex and two edges, as seen in Figure~\ref{adjoinedbigon}.
\end{definition}

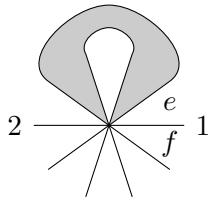
\begin{figure}[!htb]
        \center{
        
\tikzset{invclip/.style={clip,insert path={{[reset cm]
      (-16383.99999pt,-16383.99999pt) rectangle (16383.99999pt,16383.99999pt)
    }}}}
    
\begin{tikzpicture}

\tikzstyle{reverseclip}=[insert path={(current page.north east) --
  (current page.south east) --
  (current page.south west) --
  (current page.north west) --
  (current page.north east)}
]

\coordinate (Center) at (0, 0);

\coordinate (1) at (0:1);
\coordinate (2) at (36:1);
\coordinate (3) at (72:1);
\coordinate (4) at (108:1);
\coordinate (5) at (144:1);
\coordinate (6) at (180:1);
\coordinate (7) at (216:1);
\coordinate (8) at (252:1);
\coordinate (9) at (288:1);
\coordinate (10) at (324:1);

\coordinate (loop1) at (90:1.5);

\begin{scope}
\begin{pgfinterruptboundingbox}
\path[invclip] (Center) to (3) to [out=60,in=0] (90:1.3) to [out=180,in=120] (4) to (Center) -- cycle;
\path[fill=black,opacity=0.2] (Center) to (2) to [out=36,in=0] (90:1.6) to [out=180,in=180-36] (5) to (Center) to (3) to [out=60,in=0] (90:1.3) to [out=180,in=120] (4) to (Center) -- cycle;


\end{pgfinterruptboundingbox}
\end{scope}

\draw (Center) -- (1);
\draw (Center) -- (2);
\draw (Center) -- (3);
\draw (Center) -- (4);
\draw (Center) -- (5);
\draw (Center) -- (6);
\draw (Center) -- (7);
\draw (Center) -- (8);
\draw (Center) -- (9);
\draw (Center) -- (10);

\node at (1) [right = .2mm] {$1$};
\node at (6) [left = .2mm] {$2$};

\node at (2) [below = .8mm] {$e$};
\node at (10) [above = .4mm] {$f$};

\draw (3) to [out=60,in=0] (90:1.3) to [out=180,in=120] (4);
\draw (2) to [out=36,in=0] (90:1.6) to [out=180,in=180-36] (5);

\end{tikzpicture}
        
        }
        \caption{\label{adjoinedbigon} An adjoined bigon as part of a 5-crossing diagram}
\end{figure}

\begin{corollary}\label{adjoinedbigoncor}
If a minimal 5-crossing projection of $K$ for knot $K$ with $c_5(K)>1$ contains an adjoined bigon, then $c_9(K)<c_5(K)$.
\end{corollary}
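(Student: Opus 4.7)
The plan is to show that the adjoined bigon structure, together with the minimality hypothesis and the condition $c_5(K)>1$, forces the existence of an ACC or OCC pair of faces somewhere in the $5$-crossing projection. Once such a pair is exhibited, Corollary~\ref{accoccremove} applies with $n=5$ to give $c_9(K)<c_5(K)$.

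I would start by analyzing the face structure around the crossing $\alpha$ carrying the adjoined bigon. Labeling the ten cyclic positions $1,\ldots,10$ so that the inner loop $e$ connects positions $3$ and $4$, the outer loop $f$ connects $2$ and $5$, and the bigon face $B$ occupies the wedges between $(2,3)$ and $(4,5)$, one sees that the face $A_3$ inside loop $e$ and the bigon $B$ both have $\alpha$ as their only vertex; therefore neither can serve as part of an ACC or OCC pair via $\alpha$, because any crossing segment leaving such a face would have to pass through $\alpha$. Moreover, the face $B_{\mathrm{out}}$ immediately outside $f$ occupies both the wedge between $(1,2)$ and the wedge between $(5,6)$, since loop $f$ is a single edge separating the bigon from the outside region. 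The remaining wedges at $\alpha$ are $A_6, A_7, A_8, A_9, A_{10}$, each bordered by an external edge leading to some other crossing of the projection.

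Next, I would use the $c_5(K)>1$ hypothesis together with minimality to argue that $B_{\mathrm{out}}$ and one of the wedges $A_6, A_7, A_8, A_9, A_{10}$ must form either an OCC pair via one of the opposite-wedge pairs $(A_1, A_6)$ or $(A_5, A_{10})$, or an ACC pair across one of the external edges of $\alpha$. The key observation is that the two wedges $A_1$ and $A_5$ of $B_{\mathrm{out}}$ already flank the adjoined bigon from the outside; combined with the need for the external edges of $\alpha$ to terminate at other crossings, this constrains the global face structure sufficiently that at least one of the required configurations must appear, realized by a crossing segment passing through some subset of the other crossings of the diagram. Lemma~\ref{oddcrossinters} then guarantees that whenever such a segment is forced, it passes through an odd number of crossings, matching the hypothesis of Theorem~\ref{accocctheorem}.

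The main obstacle is making this forcing precise. I would attempt a case analysis on the configurations of the six external edges at $\alpha$ and the positions of their far endpoints; in each case, either a crossing segment realizing an OCC or ACC pair can be exhibited directly, or the diagram admits a further local simplification contradicting the minimality of the $5$-crossing projection. With ACC or OCC established, invoking Corollary~\ref{accoccremove} then yields a $9$-crossing projection of $K$ with one fewer crossing, hence $c_9(K)<c_5(K)$.
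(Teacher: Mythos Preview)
Your overall strategy matches the paper's: locate an ACC pair and invoke Corollary~\ref{accoccremove}. You also correctly observe the crucial geometric fact that the face just outside the outer loop (your $B_{\mathrm{out}}$, the paper's $e$) wraps around the adjoined bigon and therefore occupies \emph{two} wedges at $\alpha$, namely your $A_1$ and $A_5$.

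The gap is in what you do with that observation. You look for a crossing segment that passes through \emph{other} crossings of the diagram, which forces you into an open-ended case analysis you yourself flag as ``the main obstacle.'' The paper avoids this entirely by running the crossing segment through $\alpha$ itself. Concretely: if strand~1 (in your labeling, the strand at position~1) terminates at a different crossing $\beta$, then the two faces on either side of strand~1 --- your $B_{\mathrm{out}}$ and your $A_{10}$ --- both have $\beta$ as a vertex and both border the edge from $\beta$ to $\alpha$. A crossing segment from $B_{\mathrm{out}}$ in its $A_5$ wedge, straight through $\alpha$, lands in the diametrically opposite wedge $A_{10}$. This segment avoids $\beta$, so the pair is ACC \emph{via $\beta$}, not via $\alpha$. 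No case analysis on the global face structure is needed.

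All that remains is to check that at least one of the two strands flanking the adjoined bigon (your positions~1 and~6) actually reaches a crossing other than $\alpha$; if both looped back to $\alpha$, one argues that $K$ would be a link or have $c_5(K)=1$. That is a short local check, after which the proof is complete in a few lines.
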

\begin{proof}
We will use the labeled strands of the adjoined bigon seen in Figure~\ref{adjoinedbigon}. It is easy to see that if both strand 1 and strand 2 connect back to the original crossing, forming a monogon or adjoined bigon, $K$ will either be a link or have $c_5(K)=1$. So, strand 1 or 2 must connect to some other crossing—call it $\beta$. Without a loss of generality, say strand 1 connects to $\beta$. This means the faces $e$ and $f$ are ACC via $\beta$, and $c_9(K)<c_5(K)$.
\end{proof}

\begin{corollary}\label{atmost2}
If a minimal 5-crossing projection of $K$ for knot $K$ with $c_5(K)>1$ contains a crossing with three or more monogons, then $c_9(K)<c_5(K)$.
\end{corollary}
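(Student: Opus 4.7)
My plan is to show that three monogons at $\alpha$ force either an adjacent crossing connected (ACC) pair of faces via some crossing $\beta$ adjacent to $\alpha$, or an adjoined bigon in the diagram; after that Corollary~\ref{accoccremove} or Corollary~\ref{adjoinedbigoncor} delivers $c_9(K)<c_5(K)$. I begin by cyclically labeling the ten edge-ends of $\alpha$ as $0, 1, \ldots, 9$, with opposite edge-ends $i, i+5$ forming the $i$-th strand, and labeling the wedges $w_0, \ldots, w_9$ accordingly. Each monogon is a loop joining a pair of adjacent edge-ends, so the three monogons occupy six edge-ends in three disjoint adjacent pairs and cover three wedges; four edge-ends remain \emph{free}. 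Up to rotation and reflection, the cyclic arrangement of monogon wedges is characterized by the multiset of gaps between consecutive monogon wedges, giving the four cases $(1,1,5), (1,2,4), (1,3,3), (2,2,3)$. The key structural observation is that the exterior face of a monogon at wedge $w_i$ meets $\alpha$ at the two flanking wedges $w_{i-1}, w_{i+1}$; hence, whenever two monogon wedges are separated by a single non-monogon wedge, their exteriors share that wedge and merge into one face with multiple corners at $\alpha$. I would record these merged faces explicitly in each of the four cases.

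Because $c_5(K) > 1$ and the diagram graph is connected, some edge at $\alpha$ must reach a distinct crossing $\beta$; otherwise the four free edge-ends pair off into loops at $\alpha$, and a short case check shows that every such planar pairing either yields five monogons at $\alpha$ (leaving $\alpha$ disconnected in the diagram graph, a contradiction) or produces a non-adjacent loop at $\alpha$ encircling one of the original monogons—an adjoined bigon handled by Corollary~\ref{adjoinedbigoncor}. With $e$ an edge from $\alpha$ to $\beta$, let $F_1, F_2$ denote the two faces of the diagram flanking $e$ at $\alpha$. Using the merged-exterior structure I choose $e$ so that $(F_1, F_2)$ contains an opposite wedge pair $(w_k, w_{k+5})$ of $\alpha$, with $w_k \in F_1$ and $w_{k+5} \in F_2$. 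The straight crossing segment through $\alpha$ from $w_k$ to $w_{k+5}$ is a valid crossing segment (five strands on each side of $\alpha$), does not meet $\beta$, and joins $F_1$ to $F_2$; together with both faces bordering the strand $e$ from $\beta$ to $\alpha$, this gives ACC via $\beta$, and Corollary~\ref{accoccremove} yields $c_9(K) < c_5(K)$.

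The main obstacle is verifying the opposite-wedge property uniformly across configurations. In the $(2,2,3)$ arrangement each merged exterior face touches only two wedges of $\alpha$, and for free edges in certain ``inner'' positions the two flanking faces need not carry the required opposite wedge; the $(1,3,3)$ case similarly requires care for interior free edges. In these situations I would show that the inconvenient free edges must themselves be loops at $\alpha$, and those loops—being forced to be non-adjacent—must encircle one of the original monogons, producing an adjoined bigon and falling back to Corollary~\ref{adjoinedbigoncor}. Carrying out this elimination in every subcase is the combinatorial heart of the argument.
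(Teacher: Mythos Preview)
Your approach is correct and shares the paper's core idea---route a crossing segment straight through the monogon-laden crossing $\alpha$ to exhibit an ACC pair via a neighbouring crossing $\beta$---but the paper's execution is considerably more economical. Instead of classifying the four gap patterns $(1,1,5),(1,2,4),(1,3,3),(2,2,3)$ and then worrying about which free edge to select, the paper collapses everything into two pictures: either two of the monogons are \emph{consecutive} (one non-monogon wedge between them), which absorbs all four-monogon configurations and the three gap patterns containing a $1$; or no two are consecutive, which forces exactly the $(2,2,3)$ arrangement. In each picture the edge leaving $\alpha$ for the neighbour $\beta$ is chosen to border the large merged exterior face of the consecutive (or nearly consecutive) monogons, and that merged face already contains the wedge opposite to the other flank, so the straight segment through $\alpha$ gives ACC via $\beta$ immediately.

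Consequently the paper never invokes Corollary~\ref{adjoinedbigoncor} here at all; your adjoined-bigon fallback, while not wrong, is extra machinery. Your worry that in $(2,2,3)$ and $(1,3,3)$ the flanking faces of an ``inner'' free edge may miss the opposite-wedge property is real for \emph{some} free edges, but the paper simply picks the edge adjacent to the merged exterior (the analogue of your edges $1$ or $4$ in $(2,2,3)$), where the property always holds. What your more careful analysis buys is an explicit justification that such a good edge actually reaches a second crossing (the paper's figures tacitly assume this); what the paper's dichotomy buys is a two-line proof with two figures instead of a four-case combinatorial split. Both routes arrive at the same ACC mechanism.
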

\begin{proof}
A crossing of $K$ can't have 5 monogons as then $c_5(K)=1$. If a crossing of $K$ has 4 monogons, two of the monogons must be consecutive. This gives the crossing seen in Figure~\ref{adjacentmonogons}. Faces $e$ and $f$ are ACC via $\beta$, and we are done. The only crossing with at least three monogons and no two consecutive monogons is seen in Figure~\ref{3nonadjacentmonogons}. Again, faces $e$ and $f$ are ACC via $\beta$, which means $c_9(K)<c_5(K)$.
\end{proof}

\begin{figure}[!htb]
        \center{

\begin{tikzpicture}
\coordinate (Center-1) at (0-2, 0);

\coordinate (1-1) at (0.809-2, 0.588);
\coordinate (2-1) at (0.309-2, 0.951);
\coordinate (3-1) at (-0.309-2, 0.951);
\coordinate (4-1) at (-0.809-2,0.588);
\coordinate (5-1) at (-1-2,0);
\coordinate (6-1) at (-0.809-2,-.588);
\coordinate (7-1) at (-0.309-2,-.951);
\coordinate (8-1) at (0.309-2,-.951);
\coordinate (9-1) at (0.809-2,-.588);
\coordinate (10-1) at (1-2,0);

\coordinate (Center-2) at (0+2, 0);

\coordinate (1-2) at (0.809+2, 0.588);
\coordinate (2-2) at (0.309+2, 0.951);
\coordinate (3-2) at (-0.309+2, 0.951);
\coordinate (4-2) at (-0.809+2,0.588);
\coordinate (5-2) at (-1+2,0);
\coordinate (6-2) at (-0.809+2,-.588);
\coordinate (7-2) at (-0.309+2,-.951);
\coordinate (8-2) at (0.309+2,-.951);
\coordinate (9-2) at (0.809+2,-.588);
\coordinate (10-2) at (1+2,0);

\draw (Center-1) -- (1-1);
\draw (Center-1) -- (2-1);
\draw (Center-1) -- (3-1);
\draw (Center-1) -- (4-1);
\draw (Center-1) -- (5-1);
\draw (Center-1) -- (6-1);
\draw (Center-1) -- (7-1);
\draw (Center-1) -- (8-1);
\draw (Center-1) -- (9-1);
\draw (Center-1) -- (10-1);

\draw (Center-2) -- (1-2);
\draw (Center-2) -- (2-2);
\draw (Center-2) -- (3-2);
\draw (Center-2) -- (4-2);
\draw (Center-2) -- (5-2);
\draw (Center-2) -- (6-2);
\draw (Center-2) -- (7-2);
\draw (Center-2) -- (8-2);
\draw (Center-2) -- (9-2);
\draw (Center-2) -- (10-2);


\node at (Center-1) [shift={(-0.35,0.5)}] {$\alpha$};
\node at (Center-2) [shift={(0.35,0.5)}] {$\beta$};

\node at (0, 0.7) {$f$};
\node at (0, 1.2) {$e$};

\draw (10-2) to [out=0,in=36] (1-2);
\draw (2-2) to [out=72,in=108] (3-2);
\draw (1-1) to [out=36, in=180] (0, 1) to [out=0, in=144] (4-2);
\draw [dashed, red] (0.475 - 2, 0.1545) to [out=18, in=162] (Center-2) to [out=-18, in=180+90] (3.4, 0) to [out=90, in=-30] (0.309*1.5+2, 0.951*1.5) to [out=150, in=48] (0.2938-2, 0.4045);

\end{tikzpicture}
}
\caption{\label{adjacentmonogons} Crossing with 2 consecutive monogons}
\end{figure}
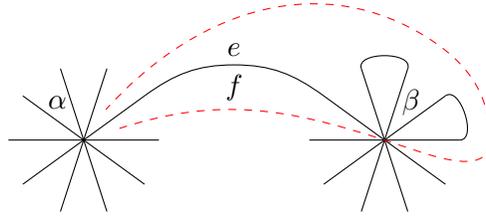

\begin{figure}[!htb]
        \center{\begin{tikzpicture}
\coordinate (Center-1) at (0-2, 0);

\coordinate (1-1) at (0.809-2, 0.588);
\coordinate (2-1) at (0.309-2, 0.951);
\coordinate (3-1) at (-0.309-2, 0.951);
\coordinate (4-1) at (-0.809-2,0.588);
\coordinate (5-1) at (-1-2,0);
\coordinate (6-1) at (-0.809-2,-.588);
\coordinate (7-1) at (-0.309-2,-.951);
\coordinate (8-1) at (0.309-2,-.951);
\coordinate (9-1) at (0.809-2,-.588);
\coordinate (10-1) at (1-2,0);

\coordinate (Center-2) at (0+2, 0);

\coordinate (1-2) at (0.809+2, 0.588);
\coordinate (2-2) at (0.309+2, 0.951);
\coordinate (3-2) at (-0.309+2, 0.951);
\coordinate (4-2) at (-0.809+2,0.588);
\coordinate (5-2) at (-1+2,0);
\coordinate (6-2) at (-0.809+2,-.588);
\coordinate (7-2) at (-0.309+2,-.951);
\coordinate (8-2) at (0.309+2,-.951);
\coordinate (9-2) at (0.809+2,-.588);
\coordinate (10-2) at (1+2,0);

\draw (Center-1) -- (1-1);
\draw (Center-1) -- (2-1);
\draw (Center-1) -- (3-1);
\draw (Center-1) -- (4-1);
\draw (Center-1) -- (5-1);
\draw (Center-1) -- (6-1);
\draw (Center-1) -- (7-1);
\draw (Center-1) -- (8-1);
\draw (Center-1) -- (9-1);
\draw (Center-1) -- (10-1);

\draw (Center-2) -- (1-2);
\draw (Center-2) -- (2-2);
\draw (Center-2) -- (3-2);
\draw (Center-2) -- (4-2);
\draw (Center-2) -- (5-2);
\draw (Center-2) -- (6-2);
\draw (Center-2) -- (7-2);
\draw (Center-2) -- (8-2);
\draw (Center-2) -- (9-2);
\draw (Center-2) -- (10-2);

\node at (Center-1) [shift={(-0.35,0.5)}] {$\alpha$};
\node at (Center-2) [shift={(0.6,0.2)}] {$\beta$};

\node at (0, 0.7) {$f$};
\node at (0, 1.2) {$e$};

\draw (9-2) to [out=-36,in=0] (10-2);
\draw (2-2) to [out=72,in=108] (3-2);
\draw (5-2) to [out=180,in=216] (6-2);

\draw (1-1) to [out=36, in=180] (0, 1) to [out=0, in=144] (4-2);
\draw [dashed, red] (0.475 - 2, 0.1545) to [out=18, in=180] (1, -0.8) to [out=0, in=234] (Center-2) to [out=54, in=-30] (2.5, 1.5) to [out=180 - 30, in=48] (0.2938-2, 0.4045);

\end{tikzpicture}}
        \caption{\label{3nonadjacentmonogons} Crossing with 3 non-consecutive monogons}
\end{figure}
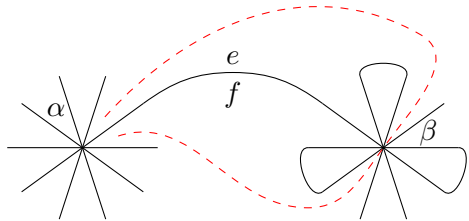

\begin{definition}[Almost Opposite]
Two monogons are almost opposite if they share the same vertex and if there are exactly two strands in between them.
\end{definition}

These 3 corollaries give the following theorem:

\begin{theorem}\label{endsection1proof}
Let $K$ be a knot with $c_5(K)>1$. Then $c_5(K)= c_9(K)$ can only hold if the following 3 conditions hold in every minimal 5-crossing projection of $K$: (1) every crossing has either no monogons, one monogon, or two almost opposite monogons; (2) no crossing has an adjoined bigon; and (3) no two faces are ACC or OCC.
\end{theorem}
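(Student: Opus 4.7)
The plan is to argue the contrapositive: assume some minimal $5$-crossing projection $P$ of $K$ violates at least one of conditions (1)--(3), and deduce $c_9(K) < c_5(K)$ in each case by invoking the corresponding corollary. Each of the three conditions is tailored to match one of the three previous corollaries, so the proof reduces to a case analysis.

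The cases matching (2) and (3) translate immediately. If (3) fails, $P$ contains a pair of ACC or OCC faces, so Corollary \ref{accoccremove} (with $n = 5$, using $c_5(K) > 1$) gives the strict inequality. If (2) fails, some crossing of $P$ admits an adjoined bigon and Corollary \ref{adjoinedbigoncor} applies.

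For (1), I would fix a violating crossing $\beta$. If $\beta$ carries three or more monogons, Corollary \ref{atmost2} applies directly. Otherwise $\beta$ carries exactly two monogons that are not almost opposite, and I would enumerate their configurations: of the $10$ strand-ending positions around $\beta$, the two monogons occupy four, and the count of remaining endings between them in the short arc is $0$, $1$, or $2$ (larger counts either coincide with these by the symmetry of the $5$-crossing or force the diagram to be a link). Count $2$ is exactly \emph{almost opposite} and thus allowed, leaving as violations the consecutive case (count $0$) and the one-between case (count $1$). The consecutive case reproduces Figure \ref{adjacentmonogons} verbatim, and the argument inside the proof of Corollary \ref{atmost2} already produces two faces $e, f$ that are ACC via $\beta$ along the strand leaving $\beta$ just past the pair of monogons; Corollary \ref{accoccremove} closes that sub-case.

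The main obstacle I expect is the remaining one-between sub-case. My plan is to run a similar ACC argument: either the single in-between strand loops back to $\beta$, in which case the resulting configuration admits an adjoined bigon handled by Corollary \ref{adjoinedbigoncor}, or it reaches some distinct crossing $\gamma$. In the latter sub-case, the two faces bordering the $\beta$--$\gamma$ edge each wrap around one of the two monogon arcs, so each face revisits $\beta$ at a second sector on the far side of the adjacent monogon. This re-visit should permit routing a crossing segment from one face to the other around $\beta$ (passing through $\gamma$ and possibly other crossings) with the parity required by Lemma \ref{oddcrossinters}, producing an ACC pair via $\beta$ and the desired strict inequality via Corollary \ref{accoccremove}. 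Making this routing rigorous in full generality is the delicate point I expect to require the most care.
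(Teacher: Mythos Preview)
Your overall strategy matches the paper's: argue the contrapositive, dispatch (2) and (3) by the cited corollaries, and handle (1) by splitting on monogon count and relative position. The paper is terser on the two-monogon sub-cases, simply declaring them ``similar to'' Figures~\ref{adjacentmonogons} and~\ref{3nonadjacentmonogons}.

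In the one-between sub-case you are working harder than necessary. You aim for an ACC pair \emph{via $\beta$}, which forces the crossing segment to avoid $\beta$ and pass through $\gamma$ and possibly other crossings---hence your worry about ``making this routing rigorous.'' The simpler move (and what the figures in the paper actually depict) is to route the segment \emph{through} $\beta$ itself: once the in-between strand reaches a distinct crossing $\gamma$, the two faces flanking the $\beta$--$\gamma$ edge wrap around the two monogons to land in \emph{opposite} sectors of $\beta$, so a single straight segment through $\beta$ joins them. This makes the pair ACC via $\gamma$, which is all Corollary~\ref{accoccremove} needs; there is no reason to insist on ACC via $\beta$. With this observation the delicate routing evaporates.

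Your edge case---the in-between strand looping back to $\beta$---is a genuine loose end, and your proposed resolution is incomplete: you get an adjoined bigon only when the strand returns to a position immediately adjacent to one of the two monogons. If it returns farther around $\beta$ you must argue separately (e.g.\ that some other strand is then forced to reach a distinct crossing, recovering the through-$\beta$ ACC argument, or that a third monogon appears). The paper's proof is equally silent here, so this is a gap you share with the original rather than one you introduced.
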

\begin{proof}
Parts (2) and (3) have already been proven in Corollary~\ref{adjoinedbigoncor} and Corollary~\ref{accoccremove}, respectively. Corollary~\ref{atmost2} gives that $c_9(K)=c_5(K)$ only if all crossings have at most 2 monogons. Consider the case where a crossing has 2 monogons. The monogons can't be opposite as $K$ would be a link. If the monogons are consecutive or separated by one strand, this gives crossings similar to the ones in Figure~\ref{adjacentmonogons} and Figure~\ref{3nonadjacentmonogons}. But then face $e$ and $f$ are ACC via $\beta$, and $c_9(K)<c_5(K)$. Then, every crossing in the minimal 5-crossing diagram of $K$ must be a crossing with no monogons, a crossing with one monogon, or a crossing with two almost opposite monogons. So, the claim holds.

\end{proof}

\begin{corollary}\label{bruteforce}
    If knot $K$ has $c_5(K)=2$, then $c_9(K)=1$.
\end{corollary}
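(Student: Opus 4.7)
The plan is to establish $c_9(K)<2$; since $c_5(K)=2>1$ forces $K$ to be nontrivial we have $c_9(K)\ge 1$, which then yields $c_9(K)=1$. By Theorem~\ref{endsection1proof} it is enough to show that no minimal two-crossing $5$-projection $P$ of $K$ simultaneously satisfies all three of its conditions, since the failure of any one triggers Corollary~\ref{adjoinedbigoncor}, Corollary~\ref{atmost2}, or Corollary~\ref{accoccremove}.

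Label the crossings of $P$ as $\alpha,\beta$ and let $s$ denote the number of edges running from $\alpha$ to $\beta$. Each crossing has degree $10$, so the number of loops at each is $(10-s)/2$, forcing $s$ even; and connectedness (so that $P$ is a knot and not a link) requires $s\ge 2$. I will run through the cases $s\in\{2,4,6,8,10\}$.

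In the loopless case $s=10$, $P$ is two vertices joined by ten parallel edges, whose planar embedding has ten bigon faces. Planarity of a two-vertex multigraph forces the cyclic order at $\beta$ to be the reverse of the cyclic order at $\alpha$, so any pair of bigons opposite at $\alpha$ is also opposite at $\beta$; a short segment through $\beta$ that avoids $\alpha$ then certifies OCC via $\alpha$ and we invoke Corollary~\ref{accoccremove}. For $s\in\{6,8\}$, I plan to apply the same reverse-rotation argument to the sub-multigraph of $\alpha\beta$-edges, with the one or two loops at each crossing displacing the opposite-bigon pairs only locally; a short diagrammatic inspection of each rotationally distinct loop placement confirms that some two faces still form an ACC or OCC pair via the opposite crossing, or that a non-monogon loop encloses a monogon to form an adjoined bigon. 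Finally, for $s\in\{2,4\}$ each crossing carries at least three loops; by condition~(1) of Theorem~\ref{endsection1proof} and Corollary~\ref{atmost2} either there are three or more monogons and we are done, or at least one loop at each vertex must be non-monogon, and with only two crossings available the innermost such loop can enclose nothing but a monogon at the same vertex, giving an adjoined bigon via Corollary~\ref{adjoinedbigoncor}.

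The main obstacle is the intermediate cases $s\in\{6,8\}$: with up to two loops at each crossing, several rotationally distinct placements are possible, and each requires a brief planar-diagrammatic check to certify the ACC, OCC, or adjoined-bigon structure. The extreme cases $s=10$ and $s\in\{2,4\}$ are more uniform thanks to the reverse-rotation fact and a pigeonhole on loops, respectively.
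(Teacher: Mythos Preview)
Your plan coincides with the paper's proof in spirit: both invoke Theorem~\ref{endsection1proof} and then reduce to a finite inspection of all two-crossing $5$-shadows. The paper simply declares that, once each vertex is restricted to zero, one, or two almost-opposite monogons, there are fifteen shadows up to crossing data, and asserts that each contains an ACC or OCC pair. You instead stratify by the number $s$ of $\alpha\beta$-edges and try to dispose of the extreme values $s=10$ and $s\in\{2,4\}$ by uniform arguments, leaving only $s\in\{6,8\}$ to a diagram check. Either way the core content is a brute-force enumeration, and both write-ups leave the middle of that enumeration to the reader.

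There is one genuine imprecision in your $s\in\{2,4\}$ argument. You claim that the innermost non-monogon loop ``can enclose nothing but a monogon,'' but it may enclose \emph{several} monogons on its non-$\beta$ side (e.g.\ a loop at positions $(p,p+5)$ enclosing monogons $(p+1,p+2)$ and $(p+3,p+4)$), and then the region between them is not a bigon. The fix is easy: if two or more monogons sit inside, they are consecutive in the cyclic order at $\alpha$ and hence not almost opposite, so condition~(1) of Theorem~\ref{endsection1proof} already fails. With that extra sentence your pigeonhole argument for $s\in\{2,4\}$ is complete; the $s=10$ reverse-rotation observation is correct as stated.
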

\begin{proof}
    Consider a $5$-crossing diagram of $K$ with 2 crossings. By Theorem~\ref{endsection1proof}, each crossing can contain at most two monogons. With only two crossings, both crossings must have the same number of monogons. Disregarding crossing information, there are 15 total knot diagrams with 2 crossings and where each crossing has either no monogons, one monogon, or two almost opposite monogons. In each of the fifteen cases, one can check that there exists two ACC or OCC faces. Then by Theorem~\ref{endsection1proof}, $c_9(K)=1$. 
\end{proof}

Corollary~\ref{bruteforce} allows us to obtain previously unknown values of $c_9(K)$ for a few knots. Multi-crossing tabulation in \cite{generaltabulation} obtained the fifth crossing number but not the ninth crossing number of the knots in Table~\ref{tabtable}.

\begin{table}[! h]
\centering
\caption{Knots with $c_5(K)=2$ and previously unknown 9-crossing number}\label{tabtable}
\vspace{4mm}
\begin{tabular}{l|l|l}
\textbf{Knot} & \textbf{$c_5(K)$} & \textbf{$c_9(K)$} \\ \hline

$8_5$                   & 2    & 1                                   \\ \hline
$8_{17}$                   & 2     & 1                            \\ \hline
$8_{18}$                   & 2        & 1                            \\ \hline
$4_1\#4_1$                   & 2         & 1                          \\ \hline
$9_{47}$                   & 2              & 1                       \\ \hline
$9_{49}$                   & 2                 & 1                     \\ 
\end{tabular}
\end{table}

\section{A 9-crossing number inequality}\label{section9crossing}

It has been shown in \cite{tabulation} that the trivial, trefoil, and figure-eight knots are the only knots $K$ such that $c_3(K)\leq 2$. So, for all results in the following section, we will restrict all knots $K$ to be knots that are not the trivial, trefoil, or figure-eight knot. For these remaining knots, we will create a 9-crossing diagram with $c_3(K)-2$ crossings by first constructing a 5-crossing projection with $c_3(K)-1$ crossings. Adams  proved this first step is always possible \cite{tripmoves}. We will instead show that all such knots contain a 5-crossing projection with $c_3(K)-1$ crossings that also contains at least one crossing of the type described in Section~\ref{sectionclassification}. Then, Theorem~\ref{endsection1proof} gives that $c_9(K)\leq c_3(K)-2$.\\

We will prove that $c_9(K)\leq c_3(K)-2$ in three main lemmas. Lemma~\ref{twomonogons} considers the case where a minimal triple crossing diagram of $K$ contains two or more monogons. Lemma~\ref{onemonogon} considers the case where a minimal triple crossing diagram of $K$ contains one monogon. Lastly, Lemma~\ref{zeromonogon} considers the case where a minimal triple crossing diagram of $K$ contains zero monogons.

\begin{lemma}\label{twomonogons}
Let $K$ be a knot that is not the trivial, trefoil, or figure-eight knot. If a minimal 3-crossing diagram of $K$ contains at least two monogons, then $c_9(K)\leq c_3(K)-2$.
\end{lemma}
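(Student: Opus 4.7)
The strategy is to construct from the given minimal $3$-crossing diagram $D$ a $5$-crossing projection $D'$ of $K$ with $c_3(K)-1$ crossings that contains a pair of ACC or OCC faces. Theorem~\ref{accocctheorem} (which does not require $D'$ to be minimal) then produces a $9$-crossing diagram of $K$ with $c_3(K)-2$ crossings, completing the proof.

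To construct $D'$, I would use Adams's monogon-based reduction from \cite{tripmoves}, which consumes one of the monogons of $D$ (say $m_1$, attached to a triple crossing $\alpha$ whose neighbouring strand leads to a triple crossing $\beta$) to produce a single quintuple crossing $\gamma$ in place of the pair $(\alpha,\beta)$, lowering the total crossing count by one. Each remaining triple crossing is then lifted to a quintuple crossing via the move of Figure~\ref{increasebytwofirsttime}. The resulting diagram $D'$ is a $5$-crossing projection of $K$ with $c_3(K)-1$ crossings.

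Let $m_2$ be a second monogon of $D$. I would case-split on whether $m_2$ is attached to $\alpha$, to $\beta$, or to some third triple crossing $\alpha'$. In the first two cases the two monogons occupy wedges of a single triple crossing; the opposite-wedge placement is excluded because it would make $K$ a split link, and the remaining inequivalent placements (up to the dihedral symmetry of the crossing) can be enumerated. For each, the trip move is traced explicitly and the resulting quintuple crossing $\gamma$ is seen to carry either an adjoined bigon or a pair of monogons that are not opposite and not almost opposite; the proofs of Corollary~\ref{adjoinedbigoncor} and Corollary~\ref{atmost2} then exhibit an ACC pair through $\gamma$. In the third case $m_2$ survives the reduction intact, and the lifting of its host crossing $\alpha'$ via Figure~\ref{increasebytwofirsttime} introduces two extra strand-ends whose presence creates either an adjoined bigon with $m_2$ or a second monogon adjacent to $m_2$, again yielding an ACC pair.

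The main obstacle is the planar bookkeeping of the first two subcases: one must check each inequivalent arrangement of two monogons at a single triple crossing, trace the trip move through it, and verify that the resulting quintuple crossing $\gamma$ exhibits the claimed forbidden configuration from Theorem~\ref{endsection1proof}. Once that enumeration is complete, the conclusion $c_9(K)\le c_3(K)-2$ follows from a single application of Theorem~\ref{accocctheorem} to $D'$.
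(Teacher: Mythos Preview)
Your overall architecture---build a $5$-crossing diagram with $c_3(K)-1$ crossings exhibiting a configuration forbidden by Theorem~\ref{endsection1proof}, then invoke Theorem~\ref{accocctheorem}---matches the paper exactly, and your Case~3 is precisely the paper's argument: the untouched crossing carrying $m_2$ acquires two additional monogons from the Figure~\ref{increasebytwofirsttime} lift, giving three monogons and hence an ACC pair via Corollary~\ref{atmost2}. (Your phrasing ``adjoined bigon or a second monogon adjacent to $m_2$'' is slightly off; the lift simply adds two monogons, and three monogons is what triggers Corollary~\ref{atmost2}.)

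Where you diverge is in Cases~1 and~2, and the paper shows this casework is entirely avoidable. First, Case~1 is vacuous: a single $3$-crossing in a minimal diagram cannot carry two monogons, since opposite monogons force a link and non-opposite monogons allow a $3$-crossing $1$-move removing the crossing (contradicting minimality). Hence $m_1$ and $m_2$ lie on distinct crossings $\alpha$ and $\beta$. Second---and this is the paper's key simplification---rather than performing the trip move along whatever strand leaves $\alpha$, the paper observes that $\alpha$ and $\beta$ cannot both have all their adjacent crossings equal to the other (this would again force a link), so without loss of generality $\alpha$ has an adjacent crossing $\gamma\neq\beta$. Performing the trip move through $\gamma$ leaves $\beta$ untouched, and you are immediately in your Case~3. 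The enumeration you flagged as ``the main obstacle'' never needs to be carried out.
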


\begin{proof}

For the minimal 3-crossing diagram of $K$ with two monogons, take a crossing $\alpha$ that contains a monogon. Crossing $\alpha$ cannot be that of Figure~\ref{oppositemonogons} because it is a link, and it can't be that of Figure~\ref{oneoffmonogon} as, no matter how the crossing information is filled in, the crossing can be removed by the 3-crossing 1-move described in \cite{tripmoves}. This leaves the crossing seen in Figure~\ref{onemonogoncrossing}. Also, since this means $\alpha$ only has one monogon, there exists at least one other crossing with a monogon. Let $\beta$ be such a crossing.\\

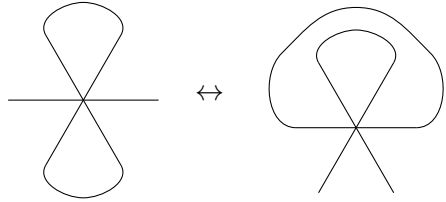
\begin{figure}[!htb]
        \center{\begin{tabular}{lll}
     \raisebox{-.5\height}{
 \begin{tikzpicture}
 \coordinate (Center) at (0, 0);
 
 \coordinate (1) at (0:1);
 \coordinate (2) at (60:1);
 \coordinate (3) at (120:1);
 \coordinate (4) at (180:1);
 \coordinate (5) at (240:1);
 \coordinate (6) at (300:1);
 
 \draw (Center) -- (1);
 \draw (Center) -- (2);
 \draw (Center) -- (3);
 \draw (Center) -- (4);
 \draw (Center) -- (5);
 \draw (Center) -- (6);
 
 \draw (2) to [out=60,in=0] (90:1.3) to [out=180,in=120] (3);
 \draw (5) to [out=180+60,in=180] (270:1.3) to [out=0,in=120+180] (6);

 \end{tikzpicture}
 
 } & $\leftrightarrow$  & \raisebox{-.5\height}{
 
 \begin{tikzpicture}
 \coordinate (Center) at (0, 0);
 
 \coordinate (1) at (0:0.8);
 \coordinate (2) at (60:1);
 \coordinate (3) at (120:1);
 \coordinate (4) at (180:0.8);
 \coordinate (5) at (240:1);
 \coordinate (6) at (300:1);
 \coordinate (loop1) at (90:1.5);
 
 \draw (Center) -- (1);
 \draw (Center) -- (2);
 \draw (Center) -- (3);
 \draw (Center) -- (4);
 \draw (Center) -- (5);
 \draw (Center) -- (6);
 
 \draw (2) to [out=60,in=0] (90:1.3) to [out=180,in=120] (3);
 \draw (1) to [out=0,in=-45] (45:1.4) to [out=-45+180,in=0] (90:1.6) to [out=180,in=45] (135:1.4) to [out=45+180,in=180] (4);

 \end{tikzpicture}}\\
 \end{tabular}}
         \caption{\label{oppositemonogons} Crossings that are part of a link}
 \end{figure}
 
 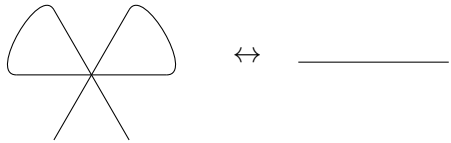
\begin{figure}[!htb]
 
 \center{\begin{tabular}{lll}
     \raisebox{-.5\height}{
 \begin{tikzpicture}
 \coordinate (Center) at (0, 0);
 
 \coordinate (1) at (0:1);
 \coordinate (2) at (60:1);
 \coordinate (3) at (120:1);
 \coordinate (4) at (180:1);
 \coordinate (5) at (240:1);
 \coordinate (6) at (300:1);
 
 \draw (Center) -- (1);
 \draw (Center) -- (2);
 \draw (Center) -- (3);
 \draw (Center) -- (4);
 \draw (Center) -- (5);
 \draw (Center) -- (6);
 
 \draw (1) to [out=0,in=60] (2);
 
 \draw (3) to [out=120,in=180] (4);

 \end{tikzpicture}
 
 } & $\leftrightarrow$  & \raisebox{-.5\height}{
 
 \begin{tikzpicture}
 \coordinate (Center) at (0, 0);
 
 \coordinate (1) at (0:1);
 \coordinate (2) at (60:1);
 \coordinate (3) at (120:1);
 \coordinate (4) at (180:1);
 \coordinate (5) at (240:1);
 \coordinate (6) at (300:1);
 
 \draw (1) to (4);

 \end{tikzpicture}}\\
 \end{tabular}}

         \caption{\label{oneoffmonogon} Crossing that can be removed with a 3-crossing 1-move}
 \end{figure}

\begin{figure}[!htb]
        \center{
\begin{tikzpicture}
\coordinate (Center) at (0, 0);

\coordinate[label=above:2] (1) at (0:1);

\coordinate (2) at (60:1);
\coordinate (3) at (120:1);
\coordinate[label=above:1] (4) at (180:1);
\coordinate (5) at (240:1);
\coordinate (6) at (300:1);

\draw (Center) -- (1);
\draw (Center) -- (2);
\draw (Center) -- (3);
\draw (Center) -- (4);
\draw (Center) -- (5);
\draw (Center) -- (6);

\draw (2) to [out=60,in=0] (90:1.3) to [out=180,in=120] (3);

\end{tikzpicture}}
        \caption{\label{onemonogoncrossing} Diagram of $\alpha$}
\end{figure}
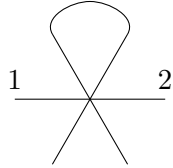

Call the crossings that connect to $\alpha$ by the 1 and 2 strands its adjacent crossings. $\beta$ has similar adjacent crossings. Crossings $\alpha$ and $\beta$ can't both have their adjacent crossings be each other as that would leave us with a link, as seen in Figure~\ref{betasame}. So, without a loss of generality, let $\alpha$ be the crossing that contains at least one adjacent crossing that is not the other crossing. So, take an adjacent crossing to $\alpha$ that is not $\beta$. Call it $\gamma$. There now exists a 3-crossing ACC via $\gamma$. We can wrap strands of $\gamma$ around $\alpha$ to eliminate the former crossing while converting the latter to a 5-crossing, as described in \cite{tripmoves}.\\

\begin{figure}[!htb]
        \center{\begin{tikzpicture}[scale=0.7]
\coordinate (Center-1) at (0-2, 0);
\coordinate (1-1) at (0.86602540378-2, 0.5);
\coordinate (2-1) at (0-2,1);
\coordinate (3-1) at (-0.86602540378-2, 0.5);
\coordinate (4-1) at (-0.86602540378-2, -0.5);
\coordinate (5-1) at (0-2,-1);
\coordinate (6-1) at (0.86602540378-2, -0.5);

\coordinate (Center-2) at (0+2, 0);
\coordinate (1-2) at (0.86602540378+2, 0.5);
\coordinate (2-2) at (0+2,1);
\coordinate (3-2) at (-0.86602540378+2, 0.5);
\coordinate (4-2) at (-0.86602540378+2, -0.5);
\coordinate (5-2) at (0+2,-1);
\coordinate (6-2) at (0.86602540378+2, -0.5);

\draw (Center-1) -- (1-1);
\draw (Center-1) -- (2-1);
\draw (Center-1) -- (3-1);
\draw (Center-1) -- (4-1);
\draw (Center-1) -- (5-1);
\draw (Center-1) -- (6-1);

\draw (Center-2) -- (1-2);
\draw (Center-2) -- (2-2);
\draw (Center-2) -- (3-2);
\draw (Center-2) -- (4-2);
\draw (Center-2) -- (5-2);
\draw (Center-2) -- (6-2);

\draw (2-1) to [out=90,in=180] (90:1.8) to [out=0,in=90] (2-2);
\draw (5-1) to [out=-90,in=-180] (270:1.8) to [out=0,in=-90] (5-2);

\end{tikzpicture}}
        \caption{\label{betasame} If $\beta$ and $\alpha$'s 1 and 2 stands are joined, $K$ is a link}
\end{figure}
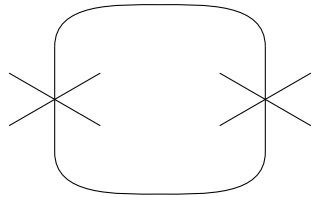

Let us also transform all other 3-crossings to 5-crossings using the move shown in Figure~\ref{increasebytwofirsttime}. This adds two monogons to all these crossings, leaving $\beta$ with at least three monogons. Yet this is one of the crossings detailed in Theorem~\ref{endsection1proof}, meaning there exists a 9-crossing projection of $K$ with one fewer crossing than this 5-crossing diagram. That means $c_9(K)\leq c_3(K)-2$.

\end{proof}

\begin{lemma}\label{algorithmsolves}
Let $K$ be a knot that is not the trivial, trefoil, or figure-eight knot. If a minimal 3-crossing diagram of $K$ (1) contains a crossing $\alpha$ with a monogon and (2) there exist two faces that are crossing connected via a crossing that is not $\alpha$ where (3) the crossing segment does not intersect $\alpha$, then $c_9(K)\leq c_3(K)-2$.
\end{lemma}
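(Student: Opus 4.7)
The plan is to chain two reductions: first to a $5$-crossing diagram with $c_{3}(K)-1$ crossings using the crossing connection at the crossing supplied by hypothesis~(2), call it $\beta$, and then to a $9$-crossing diagram with $c_{3}(K)-2$ crossings using the monogon that survives at $\alpha$.

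For the first step I would apply Theorem~\ref{accocctheorem} with $n=3$ at the crossing $\beta$. The two crossing connected faces must in fact be ACC or OCC, since by Lemma~\ref{oddcrossinters} these are the only configurations in an odd-crossing diagram whose crossing segment meets an odd number of crossings, which is what the construction of Theorem~\ref{accocctheorem} requires. This produces a $5$-crossing diagram of $K$ with $c_{3}(K)-1$ crossings. In this construction the crossings along the crossing segment are converted directly to $5$-crossings by the strand-wrapping move, while the remaining $3$-crossings, which by hypothesis~(3) include $\alpha$, are each converted by a single application of the $n\to n+2$ move of Figure~\ref{increasebytwofirsttime}.

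For the second step, I would observe that the $n\to n+2$ move inserts two additional monogons at the converted crossing, so after the first step the crossing $\alpha$, which began with one monogon, becomes a $5$-crossing with at least three monogons. Because $K$ is not the trivial, trefoil, or figure-eight knot, we have $c_{3}(K)\geq 3$ by \cite{tabulation}, so the $5$-crossing diagram contains at least two crossings, and in particular a non-monogon strand at $\alpha$ must lead to a distinct crossing. I would then reuse the argument in the proof of Corollary~\ref{atmost2}: whether the three monogons at $\alpha$ include a consecutive pair as in Figure~\ref{adjacentmonogons} or are all pairwise non-consecutive as in Figure~\ref{3nonadjacentmonogons}, one identifies two faces that are ACC via some crossing $\beta'\neq\alpha$ in the new $5$-crossing diagram. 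Applying Theorem~\ref{accocctheorem} again with $n=5$ removes one further crossing and yields a $9$-crossing diagram of $K$ with $(c_{3}(K)-1)-1=c_{3}(K)-2$ crossings.

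The main obstacle I anticipate is the careful local bookkeeping in the second step: verifying that the two extra monogons inserted by Figure~\ref{increasebytwofirsttime} really do combine with the preexisting monogon at $\alpha$ to fit one of the two reducible configurations treated by Corollary~\ref{atmost2}, and confirming that the identification of the next ACC pair is legitimate even when $\alpha$'s non-monogon strands double back. In the degenerate event that the inserted structure is better described as an adjoined bigon rather than a pair of monogons, Corollary~\ref{adjoinedbigoncor} supplies the same reduction, so the argument is robust, but the case analysis needs to be carried out explicitly to make the proof airtight.
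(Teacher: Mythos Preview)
Your two-step reduction is exactly the paper's argument: use the construction behind Theorem~\ref{accocctheorem} at the given crossing connection to pass to a $5$-crossing diagram with $c_3(K)-1$ crossings while converting $\alpha$ (which the crossing segment avoids by hypothesis~(3)) via Figure~\ref{increasebytwofirsttime}, so that $\alpha$ now carries three monogons, and then reduce once more using that three-monogon configuration. The paper phrases the second step as a direct appeal to Theorem~\ref{endsection1proof} rather than unfolding the Corollary~\ref{atmost2} argument as you do, but these are the same; your aside that Lemma~\ref{oddcrossinters} forces the crossing connection to be ACC or OCC runs the implication in the wrong direction, though the paper is equally informal at this point, writing only ``by the methods used in Theorem~\ref{accocctheorem}.''
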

\begin{proof}
By the methods used in Theorem~\ref{accocctheorem}, we can transform the 3-crossings around the crossing connection into a 5-crossings, eliminating one crossing in the process. Crossing $\alpha$ will remain untouched, meaning we can apply the transformation seen in Figure~\ref{increasebytwofirsttime} to $\alpha$ and all other untouched crossings. This gives a 5-crossing diagram with $c_3(K)-1$ crossings. Crossing $\alpha$ now contains three monogons, meaning $c_9(K)\leq c_3(K)-2$ by Theorem~\ref{endsection1proof}.
\end{proof}

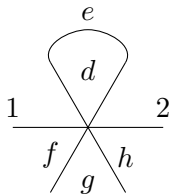
\begin{figure}[!htb]
        \center{\begin{tikzpicture}
\coordinate (Center) at (0, 0);
\node at (Center) [above = 5mm of Center] {$d$};
\node at (Center) [below = 5mm of Center] {$g$};

\node at (Center) [above = 13 mm of Center] {$e$};

\coordinate[label=above:2] (1) at (0:1);

\coordinate (2) at (60:1);
\coordinate (3) at (120:1);
\coordinate[label=above:1] (4) at (180:1);
\coordinate (5) at (240:1);
\coordinate (6) at (300:1);

\node at (5) [above = 2mm of 5] {$f$};
\node at (6) [above = 2mm of 6] {$h$};

\draw (Center) -- (1);
\draw (Center) -- (2);
\draw (Center) -- (3);
\draw (Center) -- (4);
\draw (Center) -- (5);
\draw (Center) -- (6);

\draw (2) to [out=60,in=0] (90:1.3) to [out=180,in=120] (3);

\end{tikzpicture}}
        \caption{\label{alphasfaces} Labeling of the faces adjacent to $\alpha$}
\end{figure}

\begin{lemma}\label{onemonogon}
Let $K$ be a knot that is not the trivial, trefoil, or figure-eight knot. If a minimal 3-crossing diagram of $K$ contains one monogon, then $c_9(K)\leq c_3(K)-2$.
\end{lemma}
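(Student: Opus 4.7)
The plan is to verify the hypothesis of Lemma~\ref{algorithmsolves} in every situation covered by the statement; once this is done, the conclusion is immediate. Let $\alpha$ denote the unique monogon crossing. Repeating the opening step from the proof of Lemma~\ref{twomonogons}, $\alpha$ must have the local structure of Figure~\ref{onemonogoncrossing}, since the configurations of Figures~\ref{oppositemonogons} and~\ref{oneoffmonogon} would force $K$ to be a link or would admit a 3-crossing 1-move contradicting minimality. I will label the four external arcs leaving $\alpha$ by the angles of their endpoints at $\alpha$, and write $\beta_0,\beta_{180},\beta_{240},\beta_{300}$ for the (not necessarily distinct) crossings at their far ends. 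None of these equals $\alpha$: any loop-arc would either create a second monogon at $\alpha$, violating the hypothesis, or bound a region whose simplification would lower the crossing count, contradicting minimality.

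The body of the argument will be a case analysis on the coincidence pattern of $\beta_0,\beta_{180},\beta_{240},\beta_{300}$. If two of these arcs meet at a common crossing $\beta\neq\alpha$, then $\alpha$, $\beta$, and the two arcs bound a bigonal region; in each of the resulting subcases (classified by which pair of angles at $\alpha$ is identified) I will exhibit two faces ACC via $\beta$, with a crossing segment routed through the bigon or along its exterior so that the segment misses $\alpha$, and Lemma~\ref{algorithmsolves} will then apply. The subcases in which three or four of the $\beta_i$ coincide reduce to the two-arc subcase. If all four $\beta_i$ are distinct, then $c_3(K)\geq 5$, and I will trace the knot through the $\beta_i$ out into the rest of the diagram: the global reduction implicit in Adams' bound $c_5(K)\leq c_3(K)-1$ from \cite{tripmoves} produces a crossing connection somewhere, and because at most four crossings of the diagram border $\alpha$ there is enough room to select one whose segment can be drawn away from $\alpha$.

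The hardest part will be the fully distinct case together with a few delicate bigon subcases in which every candidate segment looks forced into the neighborhood of $\alpha$. The principal tools for side-stepping $\alpha$ will be Lemma~\ref{oddcrossinters}, which ensures the segment meets an odd, hence positive, number of other crossings and therefore admits planar rerouting, together with the rigidity of $\alpha$'s monogon, which pins down the local strand configuration and leaves an exterior route available for the segment. Once a crossing connection avoiding $\alpha$ is secured in every case, Lemma~\ref{algorithmsolves} produces a 5-crossing diagram of $K$ with $c_3(K)-1$ crossings in which $\alpha$, untouched by the reduction and then promoted via Figure~\ref{increasebytwofirsttime}, carries three monogons; Theorem~\ref{endsection1proof} via Corollary~\ref{atmost2} then yields $c_9(K)\leq c_3(K)-2$.
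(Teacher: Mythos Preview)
Your high-level strategy is right---reduce to Lemma~\ref{algorithmsolves} by producing a crossing connection that avoids $\alpha$---but the execution has a real gap in the ``all four $\beta_i$ distinct'' case, and your tools for the delicate subcases do not do what you claim.

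In the fully distinct case you write that Adams' inequality $c_5(K)\leq c_3(K)-1$ ``produces a crossing connection somewhere,'' and that since only four crossings border $\alpha$ ``there is enough room to select one whose segment can be drawn away from $\alpha$.'' Neither assertion is justified. The proof of $c_5\le c_3-1$ in \cite{tripmoves} does not hand you a crossing segment disjoint from a prescribed crossing; and even if some crossing connection exists, nothing in your argument rules out that every such segment is forced through $\alpha$. Your proposed rescue via Lemma~\ref{oddcrossinters} is a non sequitur: that lemma says an ACC/OCC segment passes through an odd number of crossings, but oddness gives no mechanism for ``planar rerouting'' around $\alpha$. You need an actual construction here. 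The paper supplies one: it cases not on coincidences among the $\beta_i$ but on whether the faces $f,g$ adjacent to $\alpha$ are bigons, and then runs an explicit walk through the diagram (start at a specified crossing, cross straight through each vertex, never revisit a vertex or cross your path) that is guaranteed to terminate before ever reaching $\alpha$ because the faces $e,f,g,h$ around $\alpha$ act as traps that force termination at an earlier crossing. The termination analysis is where the real content lies, and your outline has nothing playing this role.

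Your bigon subcases are also only promissory: ``in each of the resulting subcases\ldots I will exhibit two faces ACC via $\beta$'' with a segment missing $\alpha$. Some of these are genuinely easy, but not all---for instance when $\beta_0=\beta_{180}$ the obvious segment wants to run straight through $\alpha$, and you have not said how to route around it. The paper's algorithmic walk handles all of these uniformly without needing a separate case for each coincidence pattern.
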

\begin{proof}
Consider a 3-crossing projection of $K$ with $c_3(K)$ crossings and one monogon. Let $\alpha$ be the crossing containing the monogon, and label its adjacent faces as seen in Figure~\ref{alphasfaces}. We will examine three cases, according to the number of edges of $f$ and $g$. For each case, we will present an algorithm to find two faces that are crossing connected via a crossing that is not $\alpha$ where the crossing segment does not intersect $\alpha$. Finding such a crossing connection implies $c_9(K)\leq c_3(K)-2$ for the given knot $K$ by Lemma~\ref{algorithmsolves}. We now describe the appropriate algorithm for each case.\\

\textbf{Case 1:} \textit{Faces $f$ and $g$ are both bigons.} Our algorithm will be as follows: Start at crossing $\beta$ and travel to crossing $\gamma$, as shown in Figure~\ref{fgbigon}. Whenever we reach a crossing, we must travel to the exact opposite side of the crossing. Whenever we reach a face, arbitrarily choose one of the face's vertices to travel to next. However, ensure that the vertex we travel to is not the one we just came from, is not $\alpha$, and does not require we cross our path. The algorithm terminates when we hit some crossing for the second time. Since there are a finite number of crossings, our algorithm must either fail or terminate.\\

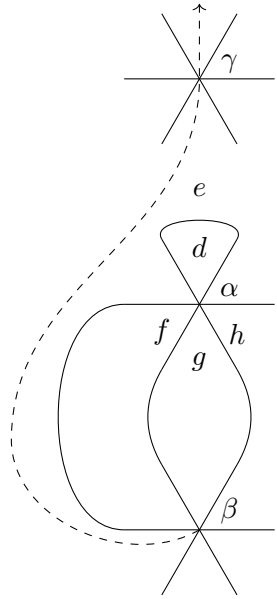
\begin{figure}[!htb]
        \center{\begin{tikzpicture}
\coordinate (Center-1) at (0, 0-3);

\coordinate (1-1) at (1,0-3);
\coordinate (2-1) at (0.5, 0.86602540378-3);
\coordinate (3-1) at (-0.5, 0.86602540378-3);
\coordinate (4-1) at (-1,0-3);
\coordinate (5-1) at (-0.5, -0.86602540378-3);
\coordinate (6-1) at (0.5, -0.86602540378-3);

\coordinate (Center-2) at (0, 0);
\coordinate (1-2) at (1,0);
\coordinate (2-2) at (0.5, 0.86602540378);
\coordinate (3-2) at (-0.5, 0.86602540378);
\coordinate (4-2) at (-1,0);
\coordinate (5-2) at (-0.5, -0.86602540378);
\coordinate (6-2) at (0.5, -0.86602540378);

\coordinate (Center-3) at (0, 0+3);
\coordinate (1-3) at (1,0+3);
\coordinate (2-3) at (0.5, 0.86602540378+3);
\coordinate (3-3) at (-0.5, 0.86602540378+3);
\coordinate (4-3) at (-1,0+3);
\coordinate (5-3) at (-0.5, -0.86602540378+3);
\coordinate (6-3) at (0.5, -0.86602540378+3);

\draw (Center-1) -- (1-1);
\draw (Center-1) -- (2-1);
\draw (Center-1) -- (3-1);
\draw (Center-1) -- (4-1);
\draw (Center-1) -- (5-1);
\draw (Center-1) -- (6-1);

\draw (Center-2) -- (1-2);
\draw (Center-2) -- (2-2);
\draw (Center-2) -- (3-2);
\draw (Center-2) -- (4-2);
\draw (Center-2) -- (5-2);
\draw (Center-2) -- (6-2);

\draw (Center-3) -- (1-3);
\draw (Center-3) -- (2-3);
\draw (Center-3) -- (3-3);
\draw (Center-3) -- (4-3);
\draw (Center-3) -- (5-3);
\draw (Center-3) -- (6-3);

\node at (Center-2) [above = 5mm] {$d$};
\node at (Center-2) [below = 5mm] {$g$};
\node at (Center-2) [above = 13 mm] {$e$};
\node at (5-2) [above = 2mm] {$f$};
\node at (6-2) [above = 2mm] {$h$};

\node at (Center-1) [shift={(0.4,0.25)}] {$\beta$};
\node at (Center-2) [shift={(0.4,0.2)}] {$\alpha$};
\node at (Center-3) [shift={(0.4,0.2)}] {$\gamma$};

\draw (2-1) to [out=60, in=180 + 120] (6-2);
\draw (3-1) to [out=120, in=180+60] (5-2);
\draw (4-1) to [out=180, in=180] (4-2);
\draw (2-2) to [out=60, in=120](3-2);
\draw[dashed,->] (Center-1) to [out=210, in=270] (-2.5, -1.8) to [out=90, in = 270](Center-3) to (0, 4);

\end{tikzpicture}}
        \caption{\label{fgbigon} Face $f$ and $g$ are both bigons}
\end{figure}

We will show the algorithm never fails. It is easy to see that the algorithm is never forced to cross its path and is only forced to intersect the same crossing twice in a row if it arrives at a monogon. The only monogon is $d$, which the path can only reach by first traversing $\alpha$. So, showing that the path never needs to cross $\alpha$ is enough to prove the algorithm never fails. Now, to intersect $\alpha$, the path must first reach $e$, $f$, $g$, or $h$. But if the path reaches any of these faces, the path will instead terminate by intersecting $\beta$ for the second time. Therefore, the path must always terminate.\\

Now, after our algorithm terminates, consider the crossing we traversed twice on our path—call this crossing $\delta$. Due to the restrictions of the algorithm, the part of our path after first traversing $\delta$ and before traversing $\delta$ a second time is a crossing segment that doesn't intersect $\alpha$. This makes two of the faces that touch $\delta$ crossing connected via $\delta$. We are done by Lemma~\ref{algorithmsolves}.\\

\textbf{Case 2:} \textit{Face $g$ is a bigon while face $f$ isn't.} We use the same algorithm used in Case~1, again starting at crossing $\beta$ and traveling to crossing $\gamma$, as shown in Figure~\ref{gbigon}. The algorithm cannot fail as the path must reach $e$, $f$, $g$, or $h$ first. But by reaching $f$, $g$, or $h$ the path will instead terminate by intersecting $\beta$ for the second time. And by reaching $e$, the path will stop by traversing $\gamma$ for the second time. So the algorithm terminates, and, for the same reason described in Case 1, we are done.\\

\begin{figure}[!htb]
        \center{\begin{tikzpicture}
\coordinate (Center-1) at (0, 0-3);

\coordinate (1-1) at (1,0-3);
\coordinate (2-1) at (0.5, 0.86602540378-3);
\coordinate (3-1) at (-0.5, 0.86602540378-3);
\coordinate (4-1) at (-1,0-3);
\coordinate (5-1) at (-0.5, -0.86602540378-3);
\coordinate (6-1) at (0.5, -0.86602540378-3);

\coordinate (Center-2) at (0, 0);
\coordinate (1-2) at (1,0);
\coordinate (2-2) at (0.5, 0.86602540378);
\coordinate (3-2) at (-0.5, 0.86602540378);
\coordinate (4-2) at (-1,0);
\coordinate (5-2) at (-0.5, -0.86602540378);
\coordinate (6-2) at (0.5, -0.86602540378);

\coordinate (Center-3) at (0-3, 0);
\coordinate (1-3) at (1-3,0);
\coordinate (2-3) at (0.5-3, 0.86602540378);
\coordinate (3-3) at (-0.5-3, 0.86602540378);
\coordinate (4-3) at (-1-3,0);
\coordinate (5-3) at (-0.5-3, -0.86602540378);
\coordinate (6-3) at (0.5-3, -0.86602540378);

\draw (Center-1) -- (1-1);
\draw (Center-1) -- (2-1);
\draw (Center-1) -- (3-1);
\draw (Center-1) -- (4-1);
\draw (Center-1) -- (5-1);
\draw (Center-1) -- (6-1);

\draw (Center-2) -- (1-2);
\draw (Center-2) -- (2-2);
\draw (Center-2) -- (3-2);
\draw (Center-2) -- (4-2);
\draw (Center-2) -- (5-2);
\draw (Center-2) -- (6-2);

\draw (Center-3) -- (1-3);
\draw (Center-3) -- (2-3);
\draw (Center-3) -- (3-3);
\draw (Center-3) -- (4-3);
\draw (Center-3) -- (5-3);
\draw (Center-3) -- (6-3);

\node at (Center-2) [above = 5mm] {$d$};
\node at (Center-2) [below = 5mm] {$g$};
\node at (Center-2) [above = 13 mm] {$e$};
\node at (5-2) [above = 2mm] {$f$};
\node at (6-2) [above = 2mm] {$h$};

\node at (Center-1) [shift={(0.4,0.25)}] {$\beta$};
\node at (Center-2) [shift={(0.4,0.2)}] {$\alpha$};
\node at (Center-3) [shift={(0.4,0.2)}] {$\gamma$};

\draw (2-1) to [out=60, in=180 + 120] (6-2);
\draw (3-1) to [out=120, in=180+60] (5-2);
\draw (1-3) to [out=0, in=180] (4-2);
\draw (2-2) to [out=60, in=120](3-2);
\draw[dashed,->] (Center-1) to [out=150, in=-30] (Center-3) to (-0.866 - 3, 0.5);

\end{tikzpicture}}
        \caption{\label{gbigon} Face $g$ is a bigon while face $f$ isn't}
\end{figure}
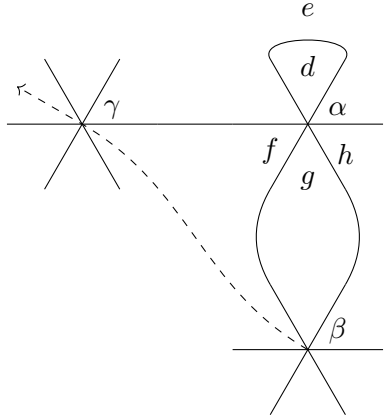

\textbf{Case 3:} \textit{Face $g$ isn't a bigon.} First, if face $e$ has 3 sides, we can perform the move seen in Figure \ref{egswitch}. This makes face $g$ a bigon. By Case~1 and Case~2, we are then done. Face $e$ can't have two sides as that makes $K$ a link, and face $e$ can't have one side because $d$ is the only monogon. So, the only remaining situation is the case where $e$ has more than three sides. We can now perform an algorithm similar to the one used in Case~1. However, we will instead start our algorithm at crossing $\delta$ and initially head towards crossing $\eps$, as seen in Figure~\ref{nonebigon}. The rules for the path are the same as the rules in Case~1. Once again, our algorithm terminates when the path intersects a crossing for the second time. Additionally, since there are a finite number of crossings, it must either terminate or fail.\\

The algorithm cannot fail as the path must reach $e$, $f$, $g$, or $h$ before crossing $\alpha$. However, by reaching $f$, $h$, or $e$ the path will instead terminate by intersecting $\delta$ or $\eps$ for the second time. Additionally, by reaching $g$, the path always has another crossing it can traverse instead of $\alpha$ as $g$ borders at least two vertices that aren't $\alpha$. For instance, if the path comes from $\beta$, it can next cross $\gamma$. So the algorithm terminates, and we are done by the same reason as in Case 1.\\ 

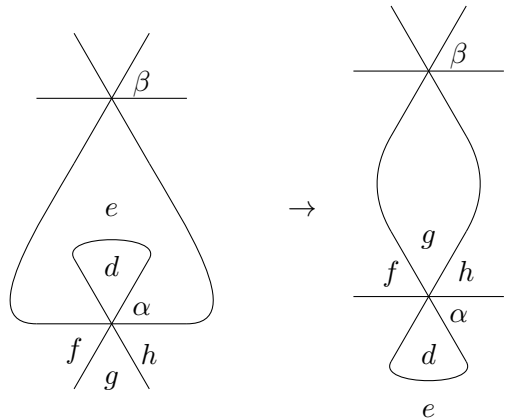
\begin{figure}[!htb]
        \center{\begin{tabular}{lll}
    \raisebox{-.5\height}{\begin{tikzpicture}

\coordinate (Center-2) at (0, 0);
\coordinate (1-2) at (1,0);
\coordinate (2-2) at (0.5, 0.86602540378);
\coordinate (3-2) at (-0.5, 0.86602540378);
\coordinate (4-2) at (-1,0);
\coordinate (5-2) at (-0.5, -0.86602540378);
\coordinate (6-2) at (0.5, -0.86602540378);

\coordinate (Center-3) at (0, 0+3);
\coordinate (1-3) at (1,0+3);
\coordinate (2-3) at (0.5, 0.86602540378+3);
\coordinate (3-3) at (-0.5, 0.86602540378+3);
\coordinate (4-3) at (-1,0+3);
\coordinate (5-3) at (-0.5, -0.86602540378+3);
\coordinate (6-3) at (0.5, -0.86602540378+3);

\draw (Center-2) -- (1-2);
\draw (Center-2) -- (2-2);
\draw (Center-2) -- (3-2);
\draw (Center-2) -- (4-2);
\draw (Center-2) -- (5-2);
\draw (Center-2) -- (6-2);

\draw (Center-3) -- (1-3);
\draw (Center-3) -- (2-3);
\draw (Center-3) -- (3-3);
\draw (Center-3) -- (4-3);
\draw (Center-3) -- (5-3);
\draw (Center-3) -- (6-3);

\node at (Center-2) [above = 5mm] {$d$};
\node at (Center-2) [below = 5mm] {$g$};
\node at (Center-2) [above = 13 mm] {$e$};
\node at (5-2) [above = 2mm] {$f$};
\node at (6-2) [above = 2mm] {$h$};

\node at (Center-2) [shift={(0.4,0.2)}] {$\alpha$};
\node at (Center-3) [shift={(0.4,0.2)}] {$\beta$};

\draw (4-2) to [out=180, in=240] (5-3);
\draw (1-2) to [out=0, in=300] (6-3);
\draw (2-2) to [out=60, in=120](3-2);

\end{tikzpicture}

} & $\rightarrow$  & \raisebox{-.5\height}{
\begin{tikzpicture}

\coordinate (Center-2) at (0, 0);
\coordinate (1-2) at (1,0);
\coordinate (2-2) at (0.5, 0.86602540378);
\coordinate (3-2) at (-0.5, 0.86602540378);
\coordinate (4-2) at (-1,0);
\coordinate (5-2) at (-0.5, -0.86602540378);
\coordinate (6-2) at (0.5, -0.86602540378);

\coordinate (Center-3) at (0, 0+3);
\coordinate (1-3) at (1,0+3);
\coordinate (2-3) at (0.5, 0.86602540378+3);
\coordinate (3-3) at (-0.5, 0.86602540378+3);
\coordinate (4-3) at (-1,0+3);
\coordinate (5-3) at (-0.5, -0.86602540378+3);
\coordinate (6-3) at (0.5, -0.86602540378+3);

\draw (Center-2) -- (1-2);
\draw (Center-2) -- (2-2);
\draw (Center-2) -- (3-2);
\draw (Center-2) -- (4-2);
\draw (Center-2) -- (5-2);
\draw (Center-2) -- (6-2);

\draw (Center-3) -- (1-3);
\draw (Center-3) -- (2-3);
\draw (Center-3) -- (3-3);
\draw (Center-3) -- (4-3);
\draw (Center-3) -- (5-3);
\draw (Center-3) -- (6-3);

\node at (Center-2) [above = 5mm] {$g$};
\node at (Center-2) [below = 5mm] {$d$};
\node at (Center-2) [below = 13 mm] {$e$};
\node at (3-2) [below = 2.5mm] {$f$};
\node at (2-2) [below = 2.5mm] {$h$};

\node at (Center-2) [shift={(0.4,-0.25)}] {$\alpha$};
\node at (Center-3) [shift={(0.4,0.2)}] {$\beta$};

\draw (3-2) to [out=180-60, in=240] (5-3);
\draw (2-2) to [out=60, in=300] (6-3);
\draw (5-2) to [out=60+180, in=120+180](6-2);

\end{tikzpicture}}
\end{tabular}}
        \caption{\label{egswitch} Face $e$ is a triangle}
\end{figure}

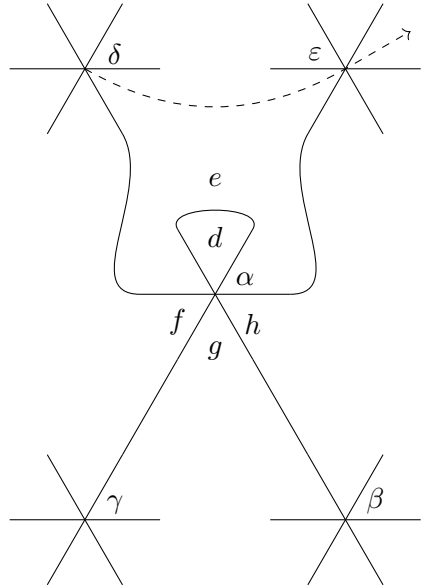
\begin{figure}[!htb]
        \center{
\begin{tikzpicture}
\coordinate (Center-1) at (0+1.73205081, 0-3);

\coordinate (1-1) at (1+1.73205081,0-3);
\coordinate (2-1) at (0.5+1.73205081, 0.86602540378-3);
\coordinate (3-1) at (-0.5+1.73205081, 0.86602540378-3);
\coordinate (4-1) at (-1+1.73205081,0-3);
\coordinate (5-1) at (-0.5+1.73205081, -0.86602540378-3);
\coordinate (6-1) at (0.5+1.73205081, -0.86602540378-3);

\coordinate (Center-2) at (0, 0);
\coordinate (1-2) at (1,0);
\coordinate (2-2) at (0.5, 0.86602540378);
\coordinate (3-2) at (-0.5, 0.86602540378);
\coordinate (4-2) at (-1,0);
\coordinate (5-2) at (-0.5, -0.86602540378);
\coordinate (6-2) at (0.5, -0.86602540378);

\coordinate (Center-3) at (0-1.73205081, 0-3);
\coordinate (1-3) at (1-1.73205081,0-3);
\coordinate (2-3) at (0.5-1.73205081, 0.86602540378-3);
\coordinate (3-3) at (-0.5-1.73205081, 0.86602540378-3);
\coordinate (4-3) at (-1-1.73205081,0-3);
\coordinate (5-3) at (-0.5-1.73205081, -0.86602540378-3);
\coordinate (6-3) at (0.5-1.73205081, -0.86602540378-3);

\coordinate (Center-4) at (0+1.73205081, 0+3);
\coordinate (1-4) at (1+1.73205081,0+3);
\coordinate (2-4) at (0.5+1.73205081, 0.86602540378+3);
\coordinate (3-4) at (-0.5+1.73205081, 0.86602540378+3);
\coordinate (4-4) at (-1+1.73205081,0+3);
\coordinate (5-4) at (-0.5+1.73205081, -0.86602540378+3);
\coordinate (6-4) at (0.5+1.73205081, -0.86602540378+3);

\coordinate (Center-5) at (0-1.73205081, 0+3);
\coordinate (1-5) at (1-1.73205081,0+3);
\coordinate (2-5) at (0.5-1.73205081, 0.86602540378+3);
\coordinate (3-5) at (-0.5-1.73205081, 0.86602540378+3);
\coordinate (4-5) at (-1-1.73205081,0+3);
\coordinate (5-5) at (-0.5-1.73205081, -0.86602540378+3);
\coordinate (6-5) at (0.5-1.73205081, -0.86602540378+3);

\draw (Center-1) -- (1-1);
\draw (Center-1) -- (2-1);
\draw (Center-1) -- (3-1);
\draw (Center-1) -- (4-1);
\draw (Center-1) -- (5-1);
\draw (Center-1) -- (6-1);

\draw (Center-2) -- (1-2);
\draw (Center-2) -- (2-2);
\draw (Center-2) -- (3-2);
\draw (Center-2) -- (4-2);
\draw (Center-2) -- (5-2);
\draw (Center-2) -- (6-2);

\draw (Center-3) -- (1-3);
\draw (Center-3) -- (2-3);
\draw (Center-3) -- (3-3);
\draw (Center-3) -- (4-3);
\draw (Center-3) -- (5-3);
\draw (Center-3) -- (6-3);

\draw (Center-4) -- (1-4);
\draw (Center-4) -- (2-4);
\draw (Center-4) -- (3-4);
\draw (Center-4) -- (4-4);
\draw (Center-4) -- (5-4);
\draw (Center-4) -- (6-4);

\draw (Center-5) -- (1-5);
\draw (Center-5) -- (2-5);
\draw (Center-5) -- (3-5);
\draw (Center-5) -- (4-5);
\draw (Center-5) -- (5-5);
\draw (Center-5) -- (6-5);

\node at (Center-2) [above = 5mm] {$d$};
\node at (Center-2) [below = 5mm] {$g$};
\node at (Center-2) [above = 13 mm] {$e$};
\node at (5-2) [above = 2mm] {$f$};
\node at (6-2) [above = 2mm] {$h$};

\node at (Center-1) [shift={(0.4,0.25)}] {$\beta$};
\node at (Center-2) [shift={(0.4,0.2)}] {$\alpha$};
\node at (Center-3) [shift={(0.4,0.2)}] {$\gamma$};
\node at (Center-4) [shift={(-0.4,0.2)}] {$\eps$};
\node at (Center-5) [shift={(0.4,0.2)}] {$\delta$};

\draw (3-1) to [out=120, in=180 + 120] (6-2);
\draw (2-3) to [out=60, in=180+60] (5-2);
\draw (2-2) to [out=60, in=120](3-2);   
\draw (4-2) to [out=180, in=-60](6-5);  
\draw (1-2) to [out=0, in=240](5-4);  
\draw[dashed,->] (Center-5) to [out=-30, in=30+180] (Center-4) to (0.866 + 1.73205081, 0.5 + 3);

\end{tikzpicture}
        }
        \caption{\label{nonebigon} Face $g$ is not a bigon, and face $e$ has more than 3 edges}
\end{figure}

All cases work, meaning that a knot $K$ that is not trivial, trefoil, or figure-eight with a minimal 3-crossing diagram containing one monogon satisfies $c_9(K)\leq c_3(K)-2$.

\end{proof}

\begin{remark}
Using the 3-crossing knot tabulation done in \cite{tabulation}, we find that Lemma~\ref{twomonogons} and Lemma~\ref{onemonogon} account for all prime knots $K$ with $c_3(K)=3$. The only prime knots $K$ with $c_3(K)=4$ that Lemma~\ref{twomonogons}~and~\ref{onemonogon} do not account for are the $10_{140}$, $11n_{139}$, and $12n_{462}$ knots. Lemma~\ref{twomonogons}~and~\ref{onemonogon} account for all prime knots $K$ with $c_3(K)=5$.
\end{remark}

\begin{lemma}\label{zeromonogon}
Let $K$ be a knot that is not the trivial, trefoil, or figure-eight knot. If a minimal 3-crossing diagram of $K$ contains zero monogons, then $c_9(K)\leq c_3(K)-2$.
\end{lemma}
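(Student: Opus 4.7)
The plan is to combine a structural observation about 3-crossing diagrams with zero monogons with a path-finding algorithm in the spirit of Lemma~\ref{onemonogon}. The key structural observation is that any such diagram must contain many bigons: for a 3-crossing projection with $V = c_3(K)$ crossings, Euler's formula yields $F = 2V + 2$ faces and $\sum_f |f| = 6V$ face-edge incidences. If $b$ denotes the number of bigons, then $\sum_f |f| \geq 2b + 3(F - b) = 3F - b$, so $b \geq 3F - 6V = 6$. Thus, there are at least six bigons in the minimal 3-crossing diagram of $K$.

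Given this abundance of bigons, I fix a bigon $B$ with vertices $\alpha$ and $\beta$, and seek two faces that are ACC or OCC via some crossing $\gamma \notin \{\alpha, \beta\}$ whose connecting crossing segment avoids both $\alpha$ and $\beta$. Applying Theorem~\ref{accocctheorem} along such a segment produces a 5-crossing diagram with $c_3(K) - 1$ crossings in which $\alpha$, $\beta$, and $B$ are untouched. Converting the remaining 3-crossings to 5-crossings via Figure~\ref{increasebytwofirsttime} adds two monogons to each untouched crossing. In particular, $\alpha$ becomes a 5-crossing with two new monogons and the preserved bigon $B$ still adjacent to it, and a case analysis of the resulting face configuration at $\alpha$ and $\beta$ shows that the 5-crossing diagram contains an ACC or OCC pair. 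A second application of Theorem~\ref{accocctheorem} then yields a 9-crossing diagram with $c_3(K) - 2$ crossings, giving $c_9(K) \leq c_3(K) - 2$.

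To find the initial ACC or OCC configuration in the 3-crossing diagram, I run a path-finding algorithm analogous to those in Lemma~\ref{onemonogon}, launched from a crossing adjacent to $B$ and forbidden from re-entering $\alpha$ or $\beta$. Termination is established by case analysis on the sizes of the faces adjacent to $\alpha$ and $\beta$ other than $B$, mirroring the three cases of Lemma~\ref{onemonogon}. The main obstacle will be the termination argument: without a monogon as a natural sink, each case requires a new verification, and we may need to re-select the starting bigon (using the lower bound $b \geq 6$) in configurations where the initial choice traps the algorithm. A secondary subtlety is confirming that the extracted crossing connection is specifically adjacent or opposite at the repeat crossing $\delta$, rather than merely crossing connected; this can be verified by invoking Lemma~\ref{oddcrossinters} on the parity of crossings traversed together with the straight-across rule at crossings.
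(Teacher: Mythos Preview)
Your plan diverges substantially from the paper's and carries a genuine risk of not closing. The paper does not run a path algorithm in the zero-monogon case at all: it argues by contradiction with the Euler relation $f_2=6+f_4+2f_5+\cdots$. Assuming $c_9(K)>c_3(K)-2$, it shows (via explicit diagram manipulations, Figures~\ref{twobigons}--\ref{oddf}) that no two bigons can be adjacent and that every $n$-gon borders at most $n-2$ bigons; double-counting then gives $f_2\le \tfrac12 f_3+f_4+\tfrac32 f_5+\cdots$, and a further restriction on how many triangles can neighbour a bigon sharpens this to $f_2\le\tfrac{11}{2}+f_4+\tfrac32 f_5+\cdots$, contradicting the Euler relation. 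So the paper's argument is global and combinatorial, not a local search for a crossing connection.

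The central gap in your proposal is the termination of the path algorithm while avoiding \emph{two} crossings $\alpha,\beta$. In Lemma~\ref{onemonogon} the monogon at $\alpha$ is exactly what makes termination work: the faces $e,f,g,h$ around $\alpha$ are arranged so that reaching any of them forces a repeat at a starting crossing. With no monogon and two forbidden vertices you lose that structure, and nothing prevents the path from landing in a face whose remaining vertices are all in $\{\alpha,\beta\}$---for instance a triangle with vertex set $\{\alpha,\beta,\delta\}$ traps the path after it passes through $\delta$. You acknowledge this as the ``main obstacle'' and suggest re-selecting the bigon, but give no argument that some choice always works. Worse, the scheme can fail outright for small diagrams: if $c_3(K)=3$, any crossing segment via the lone $\gamma\ne\alpha,\beta$ that also avoids $\alpha$ and $\beta$ must pass through zero crossings, and by Lemma~\ref{oddcrossinters} such a segment cannot connect ACC or OCC faces, so Theorem~\ref{accocctheorem} does not apply. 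Your step 6 is also only asserted: you would need to check that the two monogons added by the Figure~\ref{increasebytwofirsttime} move at $\alpha$ can be positioned so as not to be almost opposite (or that the preserved bigon $B$ forces an ACC via $\beta$), and that requires an explicit choice of the loop rather than a bare appeal to Theorem~\ref{endsection1proof}.
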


\begin{proof}
Consider a 3-crossing diagram of $K$ with $c_3(K)$ crossings. Let $f_i$ represent the number of faces with $i$ edges in a 3-crossing diagram. This includes the outer region. Adams, Hoste, \& Palmer prove in \cite{tripmoves} that $2f_1+f_2=6+f_4+2f_5+3f_6+4f_7+...$ using the Euler Characteristic. Since there are no monogons, we have
\begin{equation}\label{eulerchar}
f_2=6+f_4+2f_5+3f_6+4f_7+\dots.
\end{equation}
We will use proof by contradiction by assuming $c_9(K)>c_3(K)-2$. We will then derive a contradiction with \eqref{eulerchar}, proving the claim.\\

\begin{figure}[!htb]
        \center{\center{\begin{tabular}{lll}
    \raisebox{-.5\height}{
\begin{tikzpicture}[scale=0.7]

\coordinate (Center-1) at (0, 0);
\coordinate (1-1) at (0.86602540378,0.5);
\coordinate (2-1) at (0, 1);
\coordinate (3-1) at (-0.86602540378, 0.5);
\coordinate (4-1) at (-0.86602540378, -0.5);
\coordinate (5-1) at (0,-1);
\coordinate (6-1) at (0.86602540378, -0.5);

\coordinate (Center-2) at (0, 0+3);
\coordinate (1-2) at (0.86602540378,0.5+3);
\coordinate (2-2) at (0, 1+3);
\coordinate (3-2) at (-0.86602540378, 0.5+3);
\coordinate (4-2) at (-0.86602540378, -0.5+3);
\coordinate (5-2) at (0,-1+3);
\coordinate (6-2) at (0.86602540378, -0.5+3);

\draw (Center-1) -- (1-1);
\draw (Center-1) -- (2-1);
\draw (Center-1) -- (3-1);
\draw (Center-1) -- (4-1);
\draw (Center-1) -- (5-1);
\draw (Center-1) -- (6-1);

\draw (Center-2) -- (1-2);
\draw (Center-2) -- (2-2);
\draw (Center-2) -- (3-2);
\draw (Center-2) -- (4-2);
\draw (Center-2) -- (5-2);
\draw (Center-2) -- (6-2);

\draw (2-1) -- (5-2);
\draw (1-1) to [out=30, in=-30] (6-2);
\draw (3-1) to [out=150, in=210] (4-2);

\end{tikzpicture}

} & $\to$  & \raisebox{-.5\height}{

\begin{tikzpicture}[scale=0.7]

\coordinate (Center-1) at (0, 0);
\coordinate (1-1) at (30:0.6);
\coordinate (2-1) at (0, 1);
\coordinate (3-1) at (-0.86602540378, 0.5);
\coordinate (4-1) at (-0.86602540378, -0.5);
\coordinate (5-1) at (0,-1);
\coordinate (6-1) at (0.86602540378, -0.5);

\coordinate (Center-2) at (0, 0+3);
\coordinate (1-2) at (0.86602540378,0.5+3);
\coordinate (2-2) at (0, 1+3);
\coordinate (3-2) at (-0.86602540378, 0.5+3);
\coordinate (4-2) at (-0.86602540378, -0.5+3);
\coordinate (5-2) at (0,-1+3);
\coordinate (6-2) at (0.86602540378, -0.5+3);

\draw (Center-1) -- (1-1);
\draw (Center-1) -- (2-1);
\draw (Center-1) -- (3-1);
\draw (Center-1) -- (4-1);
\draw (Center-1) -- (5-1);
\draw (Center-1) -- (6-1);

\draw (2-1) to [out=90, in=180-50] (45:2) to [out=-45, in=0] (1, 0) to (Center-1) to [out=180, in=270](-1.5, 1.5) to [out=90, in = 270] (2-2);
\draw (1-1) to [out=30, in=180-22.5] (22.5:1) to [out=-22.5, in=12.5] (Center-1) to [out=180+12.5, in=270](-1.8, 1.8) to [out=90, in = 180 + 180 -45] (3-2);
\draw (3-1) to [out=150, in=270] (-1.2, 1.2) to [out=90, in = 180 + 45] (1-2);

\end{tikzpicture}}\\
\end{tabular}}}
        \caption{\label{twobigons} Two adjacent bigons reduce to a 5 crossing with an adjoined bigon}
\end{figure}
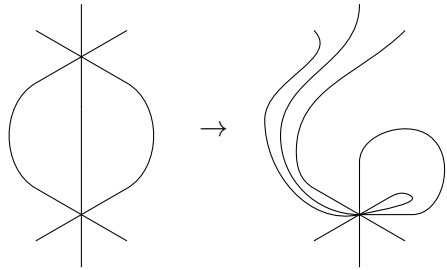

\begin{figure}[!htb]
\center{\begin{tabular}{lll}
        \raisebox{-.5\height}{
        \begin{tikzpicture}[scale=0.5]
        
        \foreach \vpos in {-3, 3} {
        \foreach \hpos in {-2, 2} {
                \coordinate (Center-\hpos-\vpos) at (\hpos, \vpos);
                \coordinate (1-\hpos-\vpos) at (\hpos + 1, \vpos + 0);
                \coordinate (2-\hpos-\vpos) at (\hpos + 0.5, \vpos + 0.86602540378);
                \coordinate (3-\hpos-\vpos) at (\hpos + -0.5, \vpos + 0.86602540378);
                \coordinate (4-\hpos-\vpos) at (\hpos + -1, \vpos + 0);
                \coordinate (5-\hpos-\vpos) at (\hpos + -0.5, \vpos + -0.86602540378);
                \coordinate (6-\hpos-\vpos) at (\hpos + 0.5, \vpos + -0.86602540378);
        
                \draw (Center-\hpos-\vpos) -- (1-\hpos-\vpos);
                \draw (Center-\hpos-\vpos) -- (2-\hpos-\vpos);
                \draw (Center-\hpos-\vpos) -- (3-\hpos-\vpos);
                \draw (Center-\hpos-\vpos) -- (4-\hpos-\vpos);
                \draw (Center-\hpos-\vpos) -- (5-\hpos-\vpos);
                \draw (Center-\hpos-\vpos) -- (6-\hpos-\vpos);
        
        }
        }
        
        \foreach \vpos in {0} {
        \foreach \hpos in {-1, 1} {
                \coordinate (Center-\hpos-\vpos) at (\hpos*3.7320508076, \vpos);
                \coordinate (1-\hpos-\vpos) at (\hpos*3.7320508076 + 1, \vpos + 0);
                \coordinate (2-\hpos-\vpos) at (\hpos*3.7320508076 + 0.5, \vpos + 0.86602540378);
                \coordinate (3-\hpos-\vpos) at (\hpos*3.7320508076 + -0.5, \vpos + 0.86602540378);
                \coordinate (4-\hpos-\vpos) at (\hpos*3.7320508076 + -1, \vpos + 0);
                \coordinate (5-\hpos-\vpos) at (\hpos*3.7320508076 + -0.5, \vpos + -0.86602540378);
                \coordinate (6-\hpos-\vpos) at (\hpos*3.7320508076 + 0.5, \vpos + -0.86602540378);
        
                \draw (Center-\hpos-\vpos) -- (1-\hpos-\vpos);
                \draw (Center-\hpos-\vpos) -- (2-\hpos-\vpos);
                \draw (Center-\hpos-\vpos) -- (3-\hpos-\vpos);
                \draw (Center-\hpos-\vpos) -- (4-\hpos-\vpos);
                \draw (Center-\hpos-\vpos) -- (5-\hpos-\vpos);
                \draw (Center-\hpos-\vpos) -- (6-\hpos-\vpos);
        
        }
        }
        
        \draw (1--2--3) to (4-2--3);
        \draw (1--2-3) to (4-2-3);
        \draw (3--2--3) to (6--1-0);
        \draw (2-2--3) to (5-1-0);
        \draw (6-2-3) to (3-1-0);
        \draw (5--2-3) to (2--1-0);
        
        \draw (2--2--3) to [out=60, in=120](3-2--3);
        \draw (5-2-3) to [out=240, in=180](4-1-0);
        \draw (6--2-3) to [out=300, in=0](1--1-0);
        
        \end{tikzpicture}
        
        } & $\to$  & \raisebox{-.5\height}{
        
        \begin{tikzpicture}[scale=0.5]
        
        \foreach \vpos in {-3, 3} {
        \foreach \hpos in {-2, 2} {
                \coordinate (Center-\hpos-\vpos) at (\hpos, \vpos);
                \coordinate (1-\hpos-\vpos) at (\hpos + 1, \vpos + 0);
                \coordinate (2-\hpos-\vpos) at (\hpos + 0.5, \vpos + 0.86602540378);
                \coordinate (3-\hpos-\vpos) at (\hpos + -0.5, \vpos + 0.86602540378);
                \coordinate (4-\hpos-\vpos) at (\hpos + -1, \vpos + 0);
                \coordinate (5-\hpos-\vpos) at (\hpos + -0.5, \vpos + -0.86602540378);
                \coordinate (6-\hpos-\vpos) at (\hpos + 0.5, \vpos + -0.86602540378);

        }
        }
        
        \draw (Center--2-3) -- (1--2-3);
        \draw (Center--2-3) -- (2--2-3);
        \draw (Center--2-3) -- (3--2-3);
        \draw (Center--2-3) -- (4--2-3);
        \draw (Center--2-3) -- (5--2-3);
        \draw (Center--2-3) -- (6--2-3);
        
        \draw (Center--2--3) -- (1--2--3);
        \draw (Center--2--3) -- (2--2--3);
        \draw (Center--2--3) -- (3--2--3);
        \draw (Center--2--3) -- (4--2--3);
        \draw (Center--2--3) -- (5--2--3);
        \draw (Center--2--3) -- (6--2--3);
        
        \draw (Center-2--3) -- (1-2--3);
        \draw (Center-2--3) -- (2-2--3);
        \draw (Center-2--3) -- (3-2--3);
        \draw (Center-2--3) -- (4-2--3);
        \draw (Center-2--3) -- (5-2--3);
        \draw (Center-2--3) -- (6-2--3);
        
        \foreach \vpos in {0} {
        \foreach \hpos in {-1, 1} {
                \coordinate (Center-\hpos-\vpos) at (\hpos*3.7320508076, \vpos);
                \coordinate (1-\hpos-\vpos) at (\hpos*3.7320508076 + 1, \vpos + 0);
                \coordinate (2-\hpos-\vpos) at (\hpos*3.7320508076 + 0.5, \vpos + 0.86602540378);
                \coordinate (3-\hpos-\vpos) at (\hpos*3.7320508076 + -0.5, \vpos + 0.86602540378);
                \coordinate (4-\hpos-\vpos) at (\hpos*3.7320508076 + -1, \vpos + 0);
                \coordinate (5-\hpos-\vpos) at (\hpos*3.7320508076 + -0.5, \vpos + -0.86602540378);
                \coordinate (6-\hpos-\vpos) at (\hpos*3.7320508076 + 0.5, \vpos + -0.86602540378);
        
                \draw (Center-\hpos-\vpos) -- (1-\hpos-\vpos);
                \draw (Center-\hpos-\vpos) -- (2-\hpos-\vpos);
                \draw (Center-\hpos-\vpos) -- (3-\hpos-\vpos);
                \draw (Center-\hpos-\vpos) -- (4-\hpos-\vpos);
                \draw (Center-\hpos-\vpos) -- (5-\hpos-\vpos);
                \draw (Center-\hpos-\vpos) -- (6-\hpos-\vpos);
        
        }
        }
        
        \draw (1--2--3) to (4-2--3);
        \draw (1--2-3) to [out=0, in=190](1-2-3);
        \draw (3--2--3) to (6--1-0);
        \draw (2-2--3) to (5-1-0);
        \draw (5--2-3) to (2--1-0);
        
        \draw (2--2--3) to [out=60, in=120](3-2--3);
        \draw (6--2-3) to [out=300, in=0](1--1-0);
        
        \draw (4-1-0) to [out=180, in=180](3.7320508076-0.2, 1.8) to [out=0, in=70](Center-1-0) to [out=250, in=40](Center-2--3) to [out=200, in=-20](Center--2--3) to [out=160, in=-60-40](Center--1-0) to [out=20+60, in=-120-20](Center--2-3) to [out=40, in=-60-40](3-2-3);
        \draw (3-1-0) to [out=120, in=180](3.7320508076-0.4, 1.5) to [out=0, in=90](Center-1-0) to [out=270, in=20](Center-2--3) to [out=220, in=-40](Center--2--3) to [out=140, in=-60-20](Center--1-0) to [out=40+60, in=-120-40](Center--2-3) to [out=20, in=-60-20](2-2-3);
        
        \end{tikzpicture}}
        \end{tabular}}
        \caption{\label{evenf} An $n$-gon with even $n$}
\end{figure}
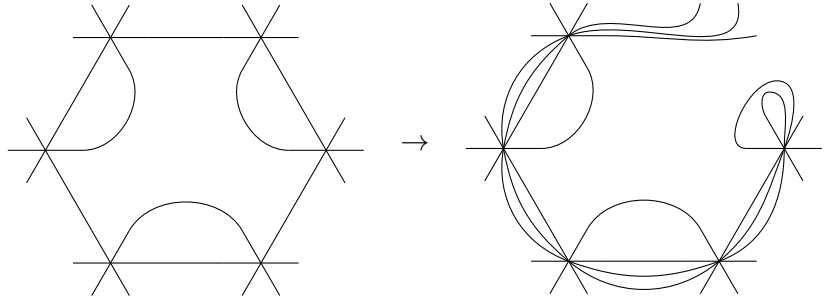

\begin{figure}[!htb]
        \center{\begin{tabular}{lll}
                \raisebox{-.5\height}{
            \begin{tikzpicture}[scale=0.7]
            
                \foreach \vpos in {-1} {
                    \foreach \hpos in {-1, 1} {
                        \coordinate (Center-\hpos-\vpos) at (3*0.809*\hpos, 3*0.309*\vpos);
                        \coordinate (1-\hpos-\vpos) at (3*0.809*\hpos + 1, 3*0.309*\vpos + 0);
                        \coordinate (2-\hpos-\vpos) at (3*0.809*\hpos + 0.5, 3*0.309*\vpos + 0.86602540378);
                        \coordinate (3-\hpos-\vpos) at (3*0.809*\hpos + -0.5, 3*0.309*\vpos + 0.86602540378);
                        \coordinate (4-\hpos-\vpos) at (3*0.809*\hpos + -1, 3*0.309*\vpos + 0);
                        \coordinate (5-\hpos-\vpos) at (3*0.809*\hpos + -0.5, 3*0.309*\vpos + -0.86602540378);
                        \coordinate (6-\hpos-\vpos) at (3*0.809*\hpos + 0.5, 3*0.309*\vpos + -0.86602540378);
                
                        \draw (Center-\hpos-\vpos) -- (1-\hpos-\vpos);
                        \draw (Center-\hpos-\vpos) -- (2-\hpos-\vpos);
                        \draw (Center-\hpos-\vpos) -- (3-\hpos-\vpos);
                        \draw (Center-\hpos-\vpos) -- (4-\hpos-\vpos);
                        \draw (Center-\hpos-\vpos) -- (5-\hpos-\vpos);
                        \draw (Center-\hpos-\vpos) -- (6-\hpos-\vpos);
                
                    }
                }
            
            \foreach \vpos in {1} {
                \foreach \hpos in {-1, 1} {
                    \coordinate (Center-\hpos-\vpos) at (\hpos*0.951*3, 3*0.5877*\vpos);
                    \coordinate (1-\hpos-\vpos) at (\hpos*0.951*3 + 1, 3*0.5877*\vpos + 0);
                    \coordinate (2-\hpos-\vpos) at (\hpos*0.951*3 + 0.5, 3*0.5877*\vpos + 0.86602540378);
                    \coordinate (3-\hpos-\vpos) at (\hpos*0.951*3 + -0.5, 3*0.5877*\vpos + 0.86602540378);
                    \coordinate (4-\hpos-\vpos) at (\hpos*0.951*3 + -1, 3*0.5877*\vpos + 0);
                    \coordinate (5-\hpos-\vpos) at (\hpos*0.951*3 + -0.5, 3*0.5877*\vpos + -0.86602540378);
                    \coordinate (6-\hpos-\vpos) at (\hpos*0.951*3 + 0.5, 3*0.5877*\vpos + -0.86602540378);
            
                    \draw (Center-\hpos-\vpos) -- (1-\hpos-\vpos);
                    \draw (Center-\hpos-\vpos) -- (2-\hpos-\vpos);
                    \draw (Center-\hpos-\vpos) -- (3-\hpos-\vpos);
                    \draw (Center-\hpos-\vpos) -- (4-\hpos-\vpos);
                    \draw (Center-\hpos-\vpos) -- (5-\hpos-\vpos);
                    \draw (Center-\hpos-\vpos) -- (6-\hpos-\vpos);
            
                }
            }
            
            \foreach \vpos in {1} {
                \foreach \hpos in {0} {
                    \coordinate (Center-\hpos-\vpos) at (\hpos, 3*\vpos);
                    \coordinate (1-\hpos-\vpos) at (\hpos + 1, 3*\vpos + 0);
                    \coordinate (2-\hpos-\vpos) at (\hpos + 0.5, 3*\vpos + 0.86602540378);
                    \coordinate (3-\hpos-\vpos) at (\hpos + -0.5, 3*\vpos + 0.86602540378);
                    \coordinate (4-\hpos-\vpos) at (\hpos + -1, 3*\vpos + 0);
                    \coordinate (5-\hpos-\vpos) at (\hpos + -0.5, 3*\vpos + -0.86602540378);
                    \coordinate (6-\hpos-\vpos) at (\hpos + 0.5, 3*\vpos + -0.86602540378);
            
                    \draw (Center-\hpos-\vpos) -- (1-\hpos-\vpos);
                    \draw (Center-\hpos-\vpos) -- (2-\hpos-\vpos);
                    \draw (Center-\hpos-\vpos) -- (3-\hpos-\vpos);
                    \draw (Center-\hpos-\vpos) -- (4-\hpos-\vpos);
                    \draw (Center-\hpos-\vpos) -- (5-\hpos-\vpos);
                    \draw (Center-\hpos-\vpos) -- (6-\hpos-\vpos);
            
                }
            }
            
            \draw (1--1--1) to (4-1--1);
            \draw (6--1--1) to [out=300, in=240] (5-1--1);
            \draw (2--1--1) to [out=60, in=300] (6--1-1);
            \draw (3-1--1) to [out=120, in=240] (5-1-1);
            
            \draw (4-1-1) to [out=180, in=300] (6-0-1);
            \draw (1--1-1) to [out=0, in=240] (5-0-1);
            \draw (3-1-1) to [out=120, in=0] (1-0-1);
            \draw (2--1-1) to [out=60, in=180] (4-0-1);
            
            \end{tikzpicture}
            
            } & $\to$  & \raisebox{-.5\height}{
            
            \begin{tikzpicture}[scale=0.7]
            
                \foreach \vpos in {-1} {
                    \foreach \hpos in {-1} {
                        \coordinate (Center-\hpos-\vpos) at (3*0.809*\hpos, 3*0.309*\vpos);
                        \coordinate (1-\hpos-\vpos) at (3*0.809*\hpos + 1, 3*0.309*\vpos + 0);
                        \coordinate (2-\hpos-\vpos) at (3*0.809*\hpos + 0.5, 3*0.309*\vpos + 0.86602540378);
                        \coordinate (3-\hpos-\vpos) at (3*0.809*\hpos + -0.5, 3*0.309*\vpos + 0.86602540378);
                        \coordinate (4-\hpos-\vpos) at (3*0.809*\hpos + -1, 3*0.309*\vpos + 0);
                        \coordinate (5-\hpos-\vpos) at (3*0.809*\hpos + -0.5, 3*0.309*\vpos + -0.86602540378);
                        \coordinate (6-\hpos-\vpos) at (3*0.809*\hpos + 0.5, 3*0.309*\vpos + -0.86602540378);
                
                        \draw (Center-\hpos-\vpos) -- (1-\hpos-\vpos);
                        \draw (Center-\hpos-\vpos) -- (2-\hpos-\vpos);
                        \draw (Center-\hpos-\vpos) -- (3-\hpos-\vpos);
                        \draw (Center-\hpos-\vpos) -- (4-\hpos-\vpos);
                        \draw (Center-\hpos-\vpos) -- (5-\hpos-\vpos);
                
                    }
                }
                \foreach \vpos in {-1} {
                    \foreach \hpos in {1} {
                        \coordinate (Center-\hpos-\vpos) at (3*0.809*\hpos, 3*0.309*\vpos);
                        \coordinate (1-\hpos-\vpos) at (3*0.809*\hpos + 1, 3*0.309*\vpos + 0);
                        \coordinate (2-\hpos-\vpos) at (3*0.809*\hpos + 0.5, 3*0.309*\vpos + 0.86602540378);
                        \coordinate (3-\hpos-\vpos) at (3*0.809*\hpos + -0.5, 3*0.309*\vpos + 0.86602540378);
                        \coordinate (4-\hpos-\vpos) at (3*0.809*\hpos + -1, 3*0.309*\vpos + 0);
                        \coordinate (5-\hpos-\vpos) at (3*0.809*\hpos + -0.5, 3*0.309*\vpos + -0.86602540378);
                        \coordinate (6-\hpos-\vpos) at (3*0.809*\hpos + 0.5, 3*0.309*\vpos + -0.86602540378);
                
                        \draw (Center-\hpos-\vpos) -- (1-\hpos-\vpos);
                        \draw (Center-\hpos-\vpos) -- (2-\hpos-\vpos);
                        \draw (Center-\hpos-\vpos) -- (3-\hpos-\vpos);
                        \draw (Center-\hpos-\vpos) -- (4-\hpos-\vpos);
                        \draw (Center-\hpos-\vpos) -- (6-\hpos-\vpos);
                
                    }
                }
            
            \foreach \vpos in {1} {
                \foreach \hpos in {-1} {
                    \coordinate (Center-\hpos-\vpos) at (\hpos*0.951*3, 3*0.5877*\vpos);
                    \coordinate (1-\hpos-\vpos) at (\hpos*0.951*3 + 1, 3*0.5877*\vpos + 0);
                    \coordinate (2-\hpos-\vpos) at (\hpos*0.951*3 + 0.5, 3*0.5877*\vpos + 0.86602540378);
                    \coordinate (3-\hpos-\vpos) at (\hpos*0.951*3 + -0.5, 3*0.5877*\vpos + 0.86602540378);
                    \coordinate (4-\hpos-\vpos) at (\hpos*0.951*3 + -1, 3*0.5877*\vpos + 0);
                    \coordinate (5-\hpos-\vpos) at (\hpos*0.951*3 + -0.5, 3*0.5877*\vpos + -0.86602540378);
                    \coordinate (6-\hpos-\vpos) at (\hpos*0.951*3 + 0.5, 3*0.5877*\vpos + -0.86602540378);
            
                    \draw (Center-\hpos-\vpos) -- (1-\hpos-\vpos);
                    \draw (Center-\hpos-\vpos) -- (3-\hpos-\vpos);
                    \draw (Center-\hpos-\vpos) -- (4-\hpos-\vpos);
                    \draw (Center-\hpos-\vpos) -- (5-\hpos-\vpos);
                    \draw (Center-\hpos-\vpos) -- (6-\hpos-\vpos);
            
                }
            }
            
            \foreach \vpos in {1} {
                \foreach \hpos in {1} {
                    \coordinate (Center-\hpos-\vpos) at (\hpos*0.951*3, 3*0.5877*\vpos);
                    \coordinate (1-\hpos-\vpos) at (\hpos*0.951*3 + 1, 3*0.5877*\vpos + 0);
                    \coordinate (2-\hpos-\vpos) at (\hpos*0.951*3 + 0.5, 3*0.5877*\vpos + 0.86602540378);
                    \coordinate (3-\hpos-\vpos) at (\hpos*0.951*3 + -0.5, 3*0.5877*\vpos + 0.86602540378);
                    \coordinate (4-\hpos-\vpos) at (\hpos*0.951*3 + -1, 3*0.5877*\vpos + 0);
                    \coordinate (5-\hpos-\vpos) at (\hpos*0.951*3 + -0.5, 3*0.5877*\vpos + -0.86602540378);
                    \coordinate (6-\hpos-\vpos) at (\hpos*0.951*3 + 0.5, 3*0.5877*\vpos + -0.86602540378);
            
                    \draw (Center-\hpos-\vpos) -- (2-\hpos-\vpos);
                    \draw (Center-\hpos-\vpos) -- (5-\hpos-\vpos);
            
                }
            }
            
            \foreach \vpos in {1} {
                \foreach \hpos in {0} {
                    \coordinate (Center-\hpos-\vpos) at (\hpos, 3*\vpos);
                    \coordinate (1-\hpos-\vpos) at (\hpos + 1, 3*\vpos + 0);
                    \coordinate (2-\hpos-\vpos) at (\hpos + 0.5, 3*\vpos + 0.86602540378);
                    \coordinate (3-\hpos-\vpos) at (\hpos + -0.5, 3*\vpos + 0.86602540378);
                    \coordinate (4-\hpos-\vpos) at (\hpos + -1, 3*\vpos + 0);
                    \coordinate (5-\hpos-\vpos) at (\hpos + -0.5, 3*\vpos + -0.86602540378);
                    \coordinate (6-\hpos-\vpos) at (\hpos + 0.5, 3*\vpos + -0.86602540378);
            
                    \draw (Center-\hpos-\vpos) -- (1-\hpos-\vpos);
                    \draw (Center-\hpos-\vpos) -- (2-\hpos-\vpos);
                    \draw (Center-\hpos-\vpos) -- (3-\hpos-\vpos);
                    \draw (Center-\hpos-\vpos) -- (5-\hpos-\vpos);
                    \draw (Center-\hpos-\vpos) -- (6-\hpos-\vpos);
            
                }
            }
            
            \draw (1--1--1) to (4-1--1);
            \draw (Center--1--1) to [out=300, in=240] (Center-1--1);
            \draw (2--1--1) to [out=60, in=300] (6--1-1);
            \draw (3-1--1) to [out=120, in=240] (5-1-1);
            
            \draw (4-1-1) to [out=180, in=300] (6-0-1);
            \draw (1--1-1) to [out=0, in=240] (5-0-1);
            \draw (3-1-1) to [out=120, in=0] (1-0-1);
            \draw (Center--1-1) to [out=40, in=180] (Center-0-1);
            
            \draw (4-1-1) to [out=0, in=-40](Center-0-1) to [out=180-40, in=60+40](Center--1-1) to [out=-60-20, in=60+20](Center--1--1) to [out=-60-40, in=180] (0, -2.5) to [out=0, in=-60-20](Center-1--1) to [out=60+40, in=180] (1-1-1);
            \draw (3-1-1) to [out=-60, in=-20](Center-0-1) to [out=180-20, in=60+20](Center--1-1) to [out=-60-40, in=60+40](Center--1--1) to [out=-60-20, in=-60-40](Center-1--1) to [out=60+20, in=120] (6-1-1);
            
            \end{tikzpicture}}
            \end{tabular}}
        \caption{\label{oddf} An $n$-gon with odd $n$}
\end{figure}
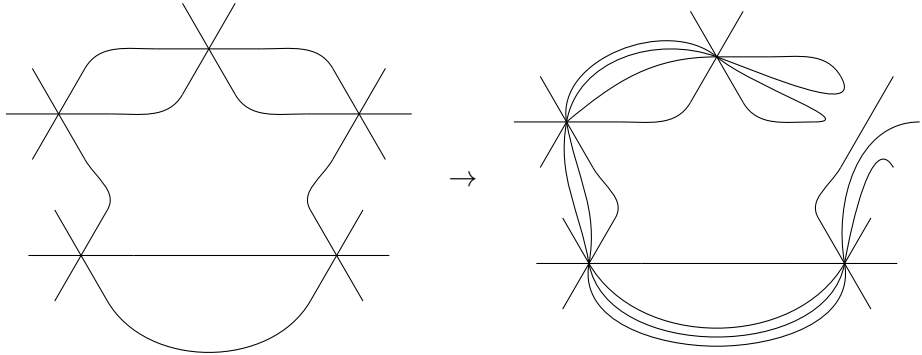

\begin{figure}[!htb]
        \center{\includegraphics[width=.5\textwidth]
        {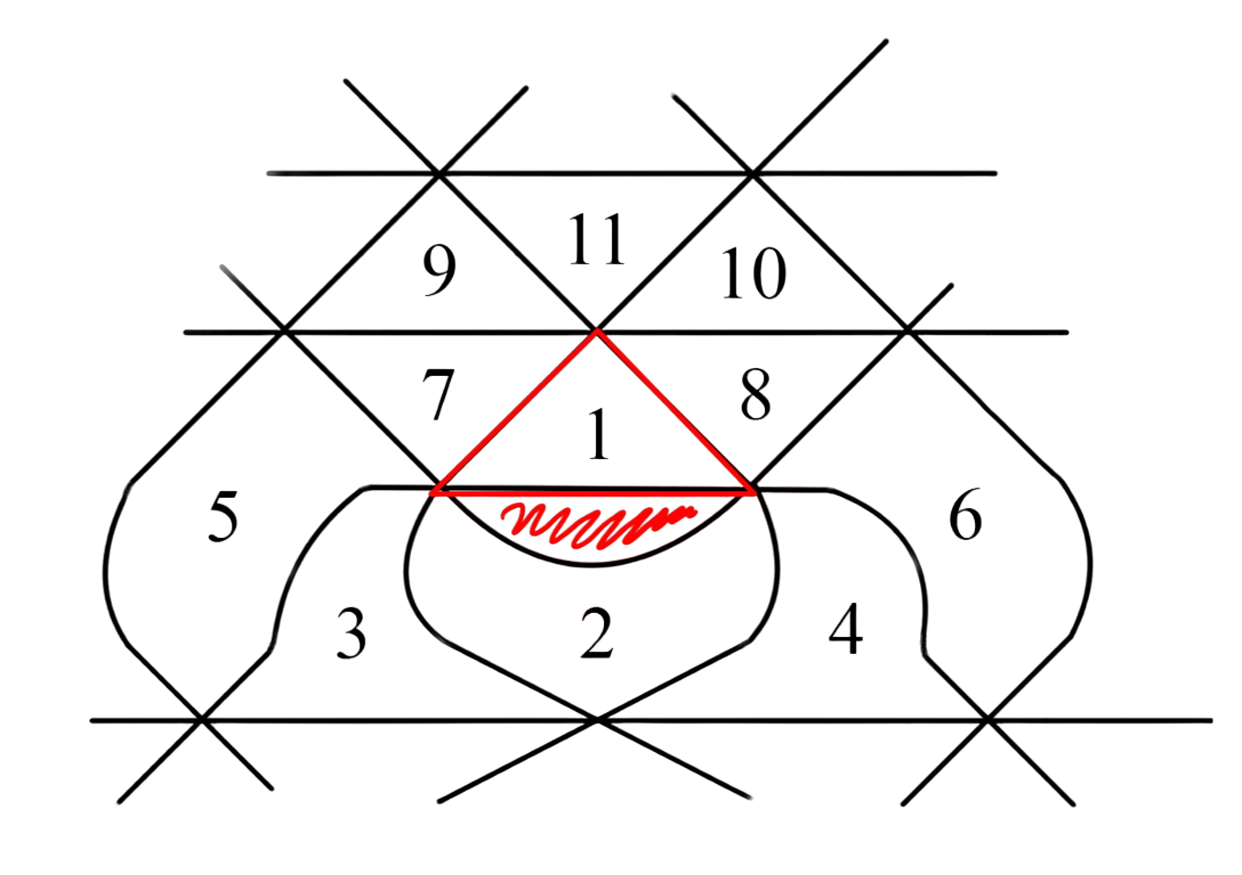}}
        \caption{\label{trianglecount} The maximum number of surrounding triangles}
\end{figure}

Note that two bigons can't be adjacent; otherwise, we can do the manipulations seen in Figure~\ref{twobigons}, meaning $c_9(K)\leq c_3(K)-2$. So, we can count the total number of bigons by counting the number of bigons that can border each $n$-gon. An $n$-gon must have fewer than $n-1$ adjacent bigons. If not, then there exists a path around the $n$-gon where all but one edge is adjacent to a bigon. This means $c_9(K)\leq c_3(K)-2$: If $n$ is even, $c_9(K)\leq c_3(K)-2$ by the construction seen in Figure~\ref{evenf}. And if $n$ is odd, $c_9(K)\leq c_3(K)-2$ by the construction seen in Figure~\ref{oddf}. So every $n$-gon borders at most $n-2$ bigons. In counting bigons this way, we have counted them all twice. So, we obtain the inequality: $$f_2\leq \frac{1}{2}f_3+f_4+\frac{3}{2}f_5+2f_6+\dots$$ To improve this bound and thus reach a contradiction with \eqref{eulerchar}, look towards each triangle in the knot projection. If two triangles share no vertices and are both adjacent to a bigon, we are done. Now, pick a triangle with an adjacent bigon—if no triangles have an adjacent bigon, $$f_2\leq f_4+\frac{3}{2}f_5+2f_6+\dots$$and we achieve a contradiction with \eqref{eulerchar} regardless. There are at most 10 other triangles that can share a crossing with our first triangle (see Figure~\ref{trianglecount}), for a total of 11 triangles. Each of these triangles can have at most one adjacent bigon. Our new bound becomes $$f_2\leq \frac{11+2f_4+3f_5+4f_6+\dots}{2}.$$So,
\begin{equation}\label{newineq}
f_2\leq \frac{11}{2}+f_4+\frac{3}{2}f_5+2f_6+\dots.
\end{equation}
This contradicts \eqref{eulerchar}, meaning the claim must hold.
\end{proof}

\vspace{0.5 cm}
Thus, it is clear to see from Lemma~\ref{twomonogons}, Lemma~\ref{onemonogon}, and Lemma~\ref{zeromonogon} that: 
\begin{theorem}\label{endresult}
Let $K$ be a knot that is not the trivial, trefoil, or figure-eight knot. Then $$c_9(K)\leq c_3(K)-2$$ 
\end{theorem}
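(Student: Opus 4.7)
The plan is to fix any minimal $3$-crossing diagram $D$ of $K$, that is, a $3$-crossing projection realizing $c_3(K)$ crossings, and then split on the number of monogons appearing in $D$. The three lemmas we have just established partition the possibilities exhaustively: Lemma~\ref{twomonogons} handles the case in which $D$ contains at least two monogons, Lemma~\ref{onemonogon} handles the case in which $D$ contains exactly one monogon, and Lemma~\ref{zeromonogon} handles the case in which $D$ contains no monogons at all. Since any diagram falls into exactly one of these three categories, each case independently yields $c_9(K)\le c_3(K)-2$, and the union of the cases gives the theorem.

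The hypothesis that $K$ is not the trivial, trefoil, or figure-eight knot is necessary at the very start: as recalled at the opening of Section~\ref{section9crossing}, these are precisely the knots with $c_3(K)\le 2$, for which the stated inequality either fails or is vacuous. This hypothesis is inherited by each of the three lemmas, all of which assume it explicitly, so no additional work is required in the combining step.

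I do not expect any substantive obstacle at this stage: all of the construction — building a $5$-crossing projection with $c_3(K)-1$ crossings that exhibits a crossing feature recognized by Theorem~\ref{endsection1proof}, and then applying that theorem to descend to a $9$-crossing projection with one fewer crossing still — has already been carried out inside the three preceding lemmas. The role of Theorem~\ref{endresult} is therefore purely organizational: its proof amounts to the observation that ``at least two monogons,'' ``exactly one monogon,'' and ``zero monogons'' form a complete trichotomy for the monogon count of $D$, so that invoking the correct lemma in each case finishes the argument.
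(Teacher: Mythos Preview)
Your proposal is correct and matches the paper's own proof essentially verbatim: the paper simply notes that Lemmas~\ref{twomonogons}, \ref{onemonogon}, and \ref{zeromonogon} together cover all cases according to the monogon count of a minimal $3$-crossing diagram. Your additional remarks about the role of the hypothesis on $K$ and the organizational nature of the theorem are accurate elaborations of what the paper leaves implicit.
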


This inequality is optimal. \cite{tabulation} shows that $c_3(5_1)=3$ and $c_3(6_2)=3$. Since $c_9(5_1)=1$ and $c_9(6_2)=1$, these knots realize the upper bound.

\section{A 13-crossing number inequality}\label{section13crossing}

Let $K$ be a knot that is not the trivial, trefoil, or figure-eight knot. It is already known that $c_5(K) \leq c_3(K)-1$ \cite{tripmoves}. If we assume that $c_9(K) \leq c_5(K) - 1$, it is clear that $c_9(K)\leq c_3(K)-2$. Theorem~\ref{endresult} proves that $c_9(K)\leq c_3(K)-2$. In this section, we will make progress towards proving inequalities concerning the 5-crossing number, such as $c_9(K)\leq c_5(K)-1$ and $c_{13}(K)\leq c_5(K)-1$.\\

By Corollary~\ref{accoccremove}, $c_9(K)<c_5(K)$ if a minimal 5-crossing diagram of $K$ contains two adjacent or opposite crossing connected faces. Theorem~\ref{endsection1proof} proves that there won't exist ACC or OCC faces in a 5-crossing diagram only if every crossing contains no monogons, one monogon, or two almost opposite monogons. We might expect that any 5-crossing projection with more than one crossing has two faces that are adjacent or opposite crossing connected; after all, this is the case for all 3-crossing diagrams. However, Figure~\ref{9counter} provides a counter-example to this conjecture. In this section, we will instead make progress on proving the following conjecture:

\begin{figure}[!htb]
        \center{
\begin{tikzpicture}[scale=0.85]

\foreach \position in {-2, 2} {
    \coordinate (Center-\position) at (0+\position, 0);

    \coordinate (1-\position) at (0.809+\position, 0.588);
    \coordinate (2-\position) at (0.309+\position, 0.951);
    \coordinate (3-\position) at (-0.309+\position, 0.951);
    \coordinate (4-\position) at (-0.809+\position,0.588);
    \coordinate (5-\position) at (-1+\position,0);
    \coordinate (6-\position) at (-0.809+\position,-.588);
    \coordinate (7-\position) at (-0.309+\position,-.951);
    \coordinate (8-\position) at (0.309+\position,-.951);
    \coordinate (9-\position) at (0.809+\position,-.588);
    \coordinate (10-\position) at (1+\position,0);

    \draw (Center-\position) -- (1-\position);
    \draw (Center-\position) -- (2-\position);
    \draw (Center-\position) -- (3-\position);
    \draw (Center-\position) -- (4-\position);
    \draw (Center-\position) -- (5-\position);
    \draw (Center-\position) -- (6-\position);
    \draw (Center-\position) -- (7-\position);
    \draw (Center-\position) -- (8-\position);
    \draw (Center-\position) -- (9-\position);
    \draw (Center-\position) -- (10-\position);
}
\foreach \position in {0}{
    \coordinate (Center-\position) at (0+\position, 0-3);

    \coordinate (1-\position) at (0.809+\position, 0.588-3);
    \coordinate (2-\position) at (0.309+\position, 0.951-3);
    \coordinate (3-\position) at (-0.309+\position, 0.951-3);
    \coordinate (4-\position) at (-0.809+\position,0.588-3);
    \coordinate (5-\position) at (-1+\position,0-3);
    \coordinate (6-\position) at (-0.809+\position,-.588-3);
    \coordinate (7-\position) at (-0.309+\position,-.951-3);
    \coordinate (8-\position) at (0.309+\position,-.951-3);
    \coordinate (9-\position) at (0.809+\position,-.588-3);
    \coordinate (10-\position) at (1+\position,0-3);
    
    \draw (Center-\position) -- (1-\position);
    \draw (Center-\position) -- (2-\position);
    \draw (Center-\position) -- (3-\position);
    \draw (Center-\position) -- (4-\position);
    \draw (Center-\position) -- (5-\position);
    \draw (Center-\position) -- (6-\position);
    \draw (Center-\position) -- (7-\position);
    \draw (Center-\position) -- (8-\position);
    \draw (Center-\position) -- (9-\position);
    \draw (Center-\position) -- (10-\position);
}

\draw (8-0) to [out=288, in=-36](9-0);
\draw (8--2) to [out=288, in=-36](9--2);
\draw (4--2) to [out=144, in=180](5--2);

\draw (5-0) to [out=180, in=-144](6--2);
\draw (4-0) to [out=144, in=-108](7--2);
\draw (3-0) to [out=108, in=-144](6-2);
\draw (2-0) to [out=72, in=-108](7-2);
\draw (1-0) to [out=36, in=-72](8-2);
\draw (10-0) to [out=0, in=-36](9-2);
\draw (7-0) to [out=-108, in=180+60] (4, -2) to [out=60, in=0](10-2);
\draw (6-0) to [out=-144, in=180] (0, -5) to [out=0, in=180+60] (4.5, -2.5) to [out=60, in=36](1-2);

\draw (10--2) to [out=0, in=180] (5-2);
\draw (1--2) to [out=36, in=180-36] (4-2);
\draw (2--2) to [out=36*2, in=180-36*2] (3-2);
\draw (3--2) to [out=36*3, in=180-36*3] (2-2);

\end{tikzpicture}
        }
        \caption{\label{9counter} Not all 5-crossing projections contain two adjacent or opposite crossing connected faces}
\end{figure}
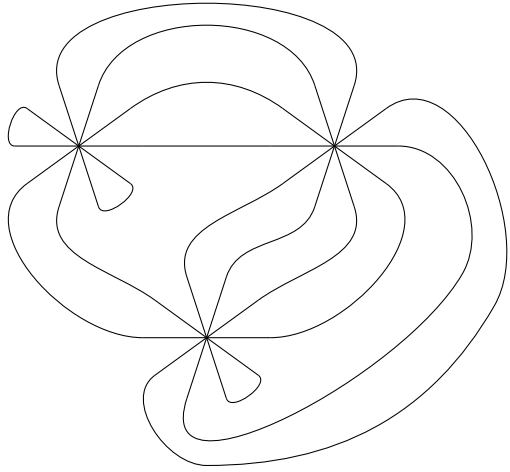

\begin{conjecture}\label{13ineqconj}
        For all knots $K$ such that $c_5(K)\geq2$, $c_{13}(K)<c_5(K)$.
\end{conjecture}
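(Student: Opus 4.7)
The plan is a dichotomy on a minimal 5-crossing projection of $K$. If some minimal 5-crossing projection of $K$ contains two adjacent or opposite crossing connected faces, then Corollary~\ref{accoccremove} immediately gives $c_9(K)<c_5(K)$, and combining with the standard inequality $c_{13}(K)\leq c_9(K)$ finishes the proof.

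Otherwise, Theorem~\ref{endsection1proof} implies that in every minimal 5-crossing projection $P$ of $K$, every crossing has zero, one, or two almost opposite monogons, and no crossing has an adjoined bigon. I would pick any such $P$ and convert it into a 7-crossing projection $P'$ of $K$ with $c_5(K)$ crossings by applying the loop-insertion move of Figure~\ref{increasebytwofirsttime} at each crossing, placing each new loop so that its monogon sits immediately adjacent to any pre-existing monogon at that crossing. The goal is then to show that $P'$ contains two ACC or OCC faces, which together with Theorem~\ref{accocctheorem} applied at $n=7$ yields a 13-crossing diagram of $K$ with $c_5(K)-1$ crossings.

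The key new ingredient is an analogue of Corollary~\ref{atmost2} for 7-crossings: if a 7-crossing has two consecutive monogons, then the strand sitting between them joins to some neighboring crossing $\beta$, and the two faces on either side of that strand are ACC via $\beta$. This is essentially the same argument given in the proof of Corollary~\ref{atmost2}, since the extra strands at a 7-crossing do not obstruct the relevant crossing segment. Consequently, whenever a crossing of $P$ already has at least one monogon, the strategic loop placement produces two consecutive monogons at the corresponding crossing of $P'$, and we are done.

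The main obstacle is the remaining subcase where $P$ has no monogons at all. The Euler-characteristic argument underlying Lemma~\ref{zeromonogon} adapts to 5-crossing projections and forces such a $P$ to contain at least ten bigons, so $P$ is bigon-rich. I would then proceed by an analogue of the bigon/triangle counting in Lemma~\ref{zeromonogon}, aiming to show that the abundance of bigons forces either a direct ACC or OCC pair in $P$ (contradicting our standing assumption) or else a configuration that becomes ACC in $P'$ after the loop insertion. The example in Figure~\ref{9counter} shows this counting must be delicate, since monogon-free 5-crossing projections avoiding ACC/OCC do exist, and I expect this bigon-rich case to be the most technically involved step.
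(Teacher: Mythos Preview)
This statement is Conjecture~\ref{13ineqconj}, which the paper explicitly leaves open; Section~\ref{section13crossing} only records partial progress (Lemmas~\ref{first13salvage} and the two following it). Your proposal is likewise incomplete, and beyond the acknowledged gap in the monogon-free case, there is a genuine gap already in your monogon case.

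The issue is your claimed analogue of Corollary~\ref{atmost2} for $7$-crossings. In the $5$-crossing situation of Figure~\ref{adjacentmonogons}, the crossing segment enters the crossing carrying the two consecutive monogons, exits at the diametrically opposite face, and then wraps around the four monogon strand-ends from outside to reach the face on the other side of the $\beta$-strand. This works precisely because all four positions it must pass are monogon ends and therefore carry only short local arcs. At a $7$-crossing the opposite face is seven positions away, so the analogous segment must wrap around six positions. Your loop insertion yields at most two consecutive monogons (four positions): one new monogon sits adjacent to the pre-existing one, but the second new monogon produced by the move of Figure~\ref{increasebytwofirsttime} lands on the far side of the crossing, not extending the consecutive block. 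Hence at least two of the six positions carry strands that run off into the diagram, and the segment cannot pass them without intersecting the knot away from a crossing. So ``the extra strands at a $7$-crossing do not obstruct the relevant crossing segment'' is false as a local statement. The same obstruction defeats the Figure~\ref{3nonadjacentmonogons} variant (monogons separated by one strand).

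For comparison, the paper's partial attack is organized differently. Rather than converting the entire diagram to a $7$-crossing projection and then hunting for an ACC, it begins with a crossing segment already present in the minimal $5$-crossing diagram (via a monogon crossing $\alpha$) and uses that segment to guide a tailored local rewiring of $\alpha$ into a $7$-crossing that manufactures an ACC at the adjacent crossing $\beta$ (Figures~\ref{onethreecrossingconnected} and~\ref{357crossingconnected}). This disposes of $22$ of the $28$ crossing-segment types via $\alpha$, leaving the six recorded in Table~\ref{cstable}; like your outline, it does not settle the monogon-free case, and the conjecture remains open.
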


We focus our efforts on Conjecture~\ref{13ineqconj} because there is a clever method to transform the knot in Figure~\ref{9counter} and similar knots into 7-crossing projections that do contain two faces that are ACC or OCC. For knots on which this move is possible, $c_{13}(K)<c_5(K)$. It should also be noted that all knots on which $c_9(K)<c_5(K)$ satisfy $c_{13}(K)<c_5(K)$ as $c_{n+2}(K)\leq c_n(K)$.

We will label the faces of any crossing containing at least one monogon according to Figure~\ref{5onemonogonlabel}. If a crossing contains two almost opposite monogons, arbitrarily choose one monogon as the reference monogon.

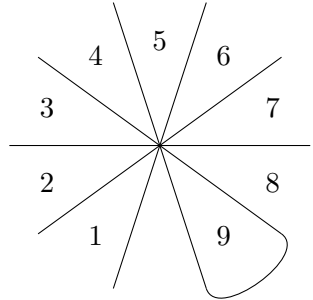
\begin{figure}[!htb]
        \center{
\begin{tikzpicture}

\coordinate (Center) at (0, 0);

\coordinate (1) at (0:2);
\coordinate (2) at (36:2);
\coordinate (3) at (72:2);
\coordinate (4) at (108:2);
\coordinate (5) at (144:2);
\coordinate (6) at (180:2);
\coordinate (7) at (216:2);
\coordinate (8) at (252:2);
\coordinate (9) at (288:2);
\coordinate (10) at (324:2);

\draw (Center) -- (1);
\draw (Center) -- (2);
\draw (Center) -- (3);
\draw (Center) -- (4);
\draw (Center) -- (5);
\draw (Center) -- (6);
\draw (Center) -- (7);
\draw (Center) -- (8);
\draw (Center) -- (9);
\draw (Center) -- (10);

\node at (Center) [shift={(-1.5,-0.5)}] {$2$};
\node at (Center) [shift={(-0.85,-1.2)}] {$1$};
\node at (Center) [shift={(0.85,-1.2)}] {$9$};
\node at (Center) [shift={(1.5,-0.5)}] {$8$};

\node at (Center) [shift={(-1.5,0.5)}] {$3$};
\node at (Center) [shift={(-0.85,1.2)}] {$4$};
\node at (Center) [shift={(0, 1.4)}] {$5$};
\node at (Center) [shift={(0.85,1.2)}] {$6$};
\node at (Center) [shift={(1.5,0.5)}] {$7$};

\draw (9) to [out=-72, in=-36] (10);

\end{tikzpicture}
        }
        \caption{\label{5onemonogonlabel} Labeling faces of 5-crossing with one monogon}
\end{figure}

\begin{lemma}\label{first13salvage}
Let $D$ be a minimal 5-crossing diagram of knot $K$. Suppose $D$ has crossing $\alpha$ containing at least one monogon. If there exists a crossing segment connecting face~$1$ and any face other than $4$ via $\alpha$, $c_{13}(K)<c_5(K)$.
\end{lemma}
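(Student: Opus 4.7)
The plan is to construct a 7-crossing projection of $K$ with $c_5(K)$ crossings that contains two ACC or OCC faces; then Theorem~\ref{accocctheorem} applied with $n=7$ produces a 13-crossing projection of $K$ with $c_5(K)-1$ crossings, giving $c_{13}(K)\le c_5(K)-1<c_5(K)$. Note that if face~$1$ and face~$X$ were already ACC or OCC in $D$ itself, Corollary~\ref{accoccremove} would yield the stronger conclusion $c_9(K)<c_5(K)$ and hence $c_{13}(K)<c_5(K)$; so the interesting content is when face~$1$ and face~$X$ are crossing connected via $\alpha$ but are neither adjacent nor opposite at $\alpha$, i.e. $X\in\{3,7,8,9\}$.

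Starting from $D$, I first promote every crossing other than $\alpha$ to a 7-crossing via the move of Figure~\ref{increasebytwofirsttime}, choosing the wrap direction at each such crossing to lie on the side opposite the crossing segment $\sigma$ witnessing the hypothesis. The total crossing count is preserved, and $\sigma$ continues to pass each modified crossing straight through and to avoid $\alpha$, so it remains a valid crossing segment in the modified diagram. At $\alpha$, instead of applying Figure~\ref{increasebytwofirsttime} blindly, I perform a specialized move that exploits the existing monogon: the monogon's bounding arc is rerouted around the side of $\alpha$ containing face~$X$ and its endpoints reattached, so that $\alpha$ becomes a 7-crossing whose two additional strand-passes sit between face~$1$ and the $\sigma$-endpoint in face~$X$. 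After this move, the segment $\sigma$ witnesses that face~$1$ and face~$X$ (or their direct analogs in the new diagram) share a common strand of the 7-crossing $\alpha$ that connects $\alpha$ to another crossing, hence form an ACC pair; in some subcases they instead lie on opposite sides of $\alpha$ and form an OCC pair.

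The exclusion $X=4$ reflects a geometric obstruction: face~$4$ is the sector of $\alpha$ diametrically opposite the monogon, so any wrap of the monogon's strand aimed at bringing face~$1$ and face~$4$ into ACC or OCC position would have to cross the interior of $\alpha$, which cannot be done without introducing additional crossings and spoiling the crossing count. For every other admissible target, the wrap remains on one side of this obstructing axis, and the monogon's strand can be redirected locally near $\alpha$.

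The main obstacle will be verifying the specialized move at $\alpha$ for each admissible label of face~$X$. This amounts to a small case analysis indexed by $X\in\{2,3,5,6,7,8,9\}$; by the left/right symmetry of Figure~\ref{5onemonogonlabel} about the vertical axis through $\alpha$, the cases collapse to a few representatives, each confirmed by drawing the local configuration at $\alpha$ after the wrap and checking that the new strand-passes and monogons create the claimed adjacency without introducing extra crossings elsewhere and without breaking $\sigma$'s status as a crossing segment. Once this verification is complete, Theorem~\ref{accocctheorem} applied to the resulting 7-crossing projection delivers the desired 13-crossing diagram and the inequality $c_{13}(K)<c_5(K)$.
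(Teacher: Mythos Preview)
Your overall plan---build a 7-crossing diagram with $c_5(K)$ crossings that carries an ACC or OCC pair, then apply Theorem~\ref{accocctheorem} with $n=7$---is exactly the paper's. The gap is the ``specialized move'' at $\alpha$. Promoting a 5-crossing to a 7-crossing adds two strands, hence four new strand-ends, and since each strand passes straight through, at most two of those four can land in the short arc between face~$1$ and face~$X$. Thus if faces $1$ and $X$ are $k$ sectors apart in the 5-crossing, after your move they are between $k$ and $k+2$ sectors apart on the short side of a 14-sector 7-crossing. For the genuinely non-adjacent, non-opposite targets (e.g.\ $X=3$, where $k=2$) none of $k,k+1,k+2$ equals $1$ or $7$, so no placement of the new strands makes faces $1$ and $X$ adjacent or opposite. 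A rerouted monogon arc also still joins $\alpha$ to itself and therefore cannot serve as the common strand ``to another crossing'' that the ACC definition requires. The case check you defer does not go through. (Your split into $X\in\{3,7,8,9\}$ is also off: face $5$ is four sectors from face $1$, hence neither adjacent nor opposite, and belongs among the hard cases.)

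The paper's construction is different and is not local to $\alpha$. For the hard targets $X\in\{3,5,7\}$ it drags the two strands of $\alpha$ that border face~$1$ along the entire crossing segment $\sigma$ and back through $\alpha$; this converts $\alpha$ and every crossing on $\sigma$ into a 7-crossing. The key point is that the resulting ACC appears at a \emph{neighbouring} crossing $\beta$, not at $\alpha$: after the pull, the two faces flanking the $\alpha$--$\beta$ edge (faces $7$ and $8$ in the paper's labelling) are connected by a crossing segment that now passes straight through the modified 7-crossing $\alpha$, so they are ACC via $\beta$. One then bumps the remaining 5-crossings to 7-crossings with Figure~\ref{increasebytwofirsttime} and finishes with Theorem~\ref{accocctheorem}. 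This relocation of the ACC from $\alpha$ to $\beta$ is the mechanism you are missing.
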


\begin{proof}
A crossing segment that connects face 1 to face 2 or 8 forms an ACC, meaning we are done. In addition, a crossing segment that connects face 1 to face 6 forms an OCC, which also implies that $c_{13}(K)<c_5(K)$. Next, consider the case where face 1 is connected to face 3 by a crossing segment. Pulling two strands around $\alpha$ and the given crossing segment forms a 7-crossing where face 7 and 8 are ACC via some adjacent crossing $\beta$. This is visualized in Figure~\ref{onethreecrossingconnected}. One can then apply the move seen in Figure~\ref{increasebytwofirsttime} to all remaining 5-crossings. Then, by Corollary~\ref{accoccremove}, $c_{13}(K)<c_5(K)$ for this specific case. In a similar manner, the moves shown in Figure~\ref{357crossingconnected} can be done if there exists a crossing segment connecting face 1 to face 5 or 7. Crossing $\alpha$ is transformed into a 7-crossing such that face 7 and 8 are ACC via crossing $\beta$, all remaining crossings are transformed into 7-crossings, and Corollary~\ref{accoccremove} gives that $c_{13}(K)<c_5(K)$.
\end{proof}

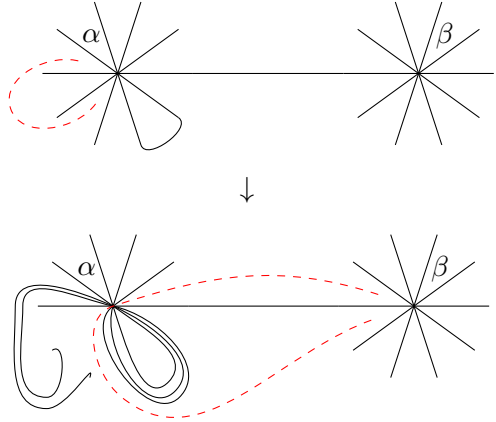
\begin{figure}[!htb]
        \center{
\begin{tikzpicture}
\coordinate (Center-1) at (0-2, 0);

\coordinate (1-1) at (0.809-2, 0.588);
\coordinate (2-1) at (0.309-2, 0.951);
\coordinate (3-1) at (-0.309-2, 0.951);
\coordinate (4-1) at (-0.809-2,0.588);
\coordinate (5-1) at (-1-2,0);
\coordinate (6-1) at (-0.809-2,-.588);
\coordinate (7-1) at (-0.309-2,-.951);
\coordinate (8-1) at (0.309-2,-.951);
\coordinate (9-1) at (0.809-2,-.588);
\coordinate (10-1) at (1-2,0);

\coordinate (Center-2) at (0+2, 0);

\coordinate (1-2) at (0.809+2, 0.588);
\coordinate (2-2) at (0.309+2, 0.951);
\coordinate (3-2) at (-0.309+2, 0.951);
\coordinate (4-2) at (-0.809+2,0.588);
\coordinate (5-2) at (-1+2,0);
\coordinate (6-2) at (-0.809+2,-.588);
\coordinate (7-2) at (-0.309+2,-.951);
\coordinate (8-2) at (0.309+2,-.951);
\coordinate (9-2) at (0.809+2,-.588);
\coordinate (10-2) at (1+2,0);

\draw (Center-1) -- (1-1);
\draw (Center-1) -- (2-1);
\draw (Center-1) -- (3-1);
\draw (Center-1) -- (4-1);
\draw (Center-1) -- (5-1);
\draw (Center-1) -- (6-1);
\draw (Center-1) -- (7-1);
\draw (Center-1) -- (8-1);
\draw (Center-1) -- (9-1);
\draw (Center-1) -- (10-1);

\draw (Center-2) -- (1-2);
\draw (Center-2) -- (2-2);
\draw (Center-2) -- (3-2);
\draw (Center-2) -- (4-2);
\draw (Center-2) -- (5-2);
\draw (Center-2) -- (6-2);
\draw (Center-2) -- (7-2);
\draw (Center-2) -- (8-2);
\draw (Center-2) -- (9-2);
\draw (Center-2) -- (10-2);


\node at (Center-1) [shift={(-0.35,0.5)}] {$\alpha$};
\node at (Center-2) [shift={(0.35,0.5)}] {$\beta$};

\draw (8-1) to [out=-72,in=-36] (9-1);
\draw (10-1) to (5-2);

\draw [dashed, red] (-0.2938-2, -0.4045) to [out=180+36+18, in = 180+108] (3*-0.475 - 2, 3*-0.1545) to [out = 108, in=180-18] (-0.475 - 2, 0.1545);


\end{tikzpicture}

$\downarrow$\\

\begin{tikzpicture}
    
\end{tikzpicture}

\begin{tikzpicture}
\coordinate (Center-1) at (0-2, 0);

\coordinate (1-1) at (0.809-2, 0.588);
\coordinate (2-1) at (0.309-2, 0.951);
\coordinate (3-1) at (-0.309-2, 0.951);
\coordinate (4-1) at (-0.809-2,0.588);
\coordinate (5-1) at (-1-2,0);
\coordinate (6-1) at (-0.809-2,-.588);
\coordinate (7-1) at (-0.309-2,-.951);
\coordinate (8-1) at (0.309-2,-.951);
\coordinate (9-1) at (0.809-2,-.588);
\coordinate (10-1) at (1-2,0);

\coordinate (Center-2) at (0+2, 0);

\coordinate (1-2) at (0.809+2, 0.588);
\coordinate (2-2) at (0.309+2, 0.951);
\coordinate (3-2) at (-0.309+2, 0.951);
\coordinate (4-2) at (-0.809+2,0.588);
\coordinate (5-2) at (-1+2,0);
\coordinate (6-2) at (-0.809+2,-.588);
\coordinate (7-2) at (-0.309+2,-.951);
\coordinate (8-2) at (0.309+2,-.951);
\coordinate (9-2) at (0.809+2,-.588);
\coordinate (10-2) at (1+2,0);

\draw (Center-1) -- (1-1);
\draw (Center-1) -- (2-1);
\draw (Center-1) -- (3-1);
\draw (Center-1) -- (4-1);
\draw (Center-1) -- (5-1);
\draw (Center-1) -- (10-1);

\draw (Center-2) -- (1-2);
\draw (Center-2) -- (2-2);
\draw (Center-2) -- (3-2);
\draw (Center-2) -- (4-2);
\draw (Center-2) -- (5-2);
\draw (Center-2) -- (6-2);
\draw (Center-2) -- (7-2);
\draw (Center-2) -- (8-2);
\draw (Center-2) -- (9-2);
\draw (Center-2) -- (10-2);

\node at (Center-1) [shift={(-0.35,0.5)}] {$\alpha$};
\node at (Center-2) [shift={(0.35,0.5)}] {$\beta$};

\draw (Center-1) to [out=-72, in=180+45] (2.5*0.2938-2, 2.5*-0.4045) to  [out=45, in=-36] (Center-1);
\draw (10-1) to (5-2);

\draw (6-1) to [out=36, in=0] (3*-0.2938-2, 3*-0.4045) to [out=180, in=270] (-3.2, 0) to [out=90, in= 180-12](Center-1) to [out=-12, in = 45] (3*0.2938-2, 3*-0.4045) to [out=180+45, in = 360-36*4](Center-1);

\draw (7-1) to [out=36*2, in=0] (3*-0.2938-2, 3*-0.4545) to [out=180, in=270] (-3.3, 0) to [out=90, in= 180-24](Center-1) to [out=-24, in = 45] (2.8*0.2938-2, 2.8*-0.4045) to [out=180+45, in = 360-36*3](Center-1);

\draw[dashed, red] (-0.475 + 2, 0.1545) to [out=180-18, in=18] (Center-1) to [out=180+18, in=140] (-1.9, -1.2) to [out=-40, in=180+18] (-0.475 + 2, -0.1545);

\end{tikzpicture}}
        \caption{\label{onethreecrossingconnected} Transforming a 5-crossing with a crossing segment between face 1 and face 3 into a 7-crossing with an ACC.}
\end{figure}

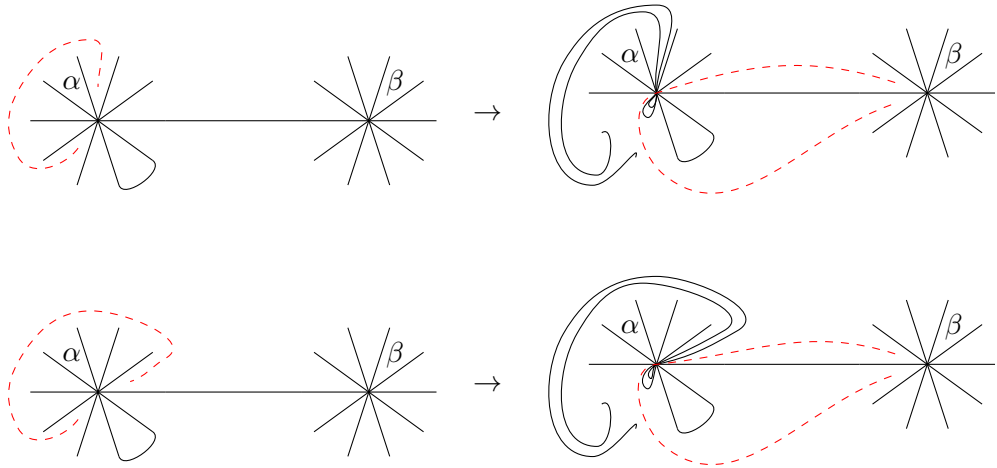
\begin{figure}[!htb]

        \center{\begin{tabular}{lll}
                \raisebox{-.5\height}{
            \begin{tikzpicture}[scale=0.9]
            
                \coordinate (Center-1) at (0-2, 0);
            
                \coordinate (1-1) at (0.809-2, 0.588);
                \coordinate (2-1) at (0.309-2, 0.951);
                \coordinate (3-1) at (-0.309-2, 0.951);
                \coordinate (4-1) at (-0.809-2,0.588);
                \coordinate (5-1) at (-1-2,0);
                \coordinate (6-1) at (-0.809-2,-.588);
                \coordinate (7-1) at (-0.309-2,-.951);
                \coordinate (8-1) at (0.309-2,-.951);
                \coordinate (9-1) at (0.809-2,-.588);
                \coordinate (10-1) at (1-2,0);
                
                \coordinate (Center-2) at (0+2, 0);
                
                \coordinate (1-2) at (0.809+2, 0.588);
                \coordinate (2-2) at (0.309+2, 0.951);
                \coordinate (3-2) at (-0.309+2, 0.951);
                \coordinate (4-2) at (-0.809+2,0.588);
                \coordinate (5-2) at (-1+2,0);
                \coordinate (6-2) at (-0.809+2,-.588);
                \coordinate (7-2) at (-0.309+2,-.951);
                \coordinate (8-2) at (0.309+2,-.951);
                \coordinate (9-2) at (0.809+2,-.588);
                \coordinate (10-2) at (1+2,0);

                \draw (Center-1) -- (1-1);
                \draw (Center-1) -- (2-1);
                \draw (Center-1) -- (3-1);
                \draw (Center-1) -- (4-1);
                \draw (Center-1) -- (5-1);
                \draw (Center-1) -- (6-1);
                \draw (Center-1) -- (7-1);
                \draw (Center-1) -- (8-1);
                \draw (Center-1) -- (9-1);
                \draw (Center-1) -- (10-1);

                \draw (Center-2) -- (1-2);
                \draw (Center-2) -- (2-2);
                \draw (Center-2) -- (3-2);
                \draw (Center-2) -- (4-2);
                \draw (Center-2) -- (5-2);
                \draw (Center-2) -- (6-2);
                \draw (Center-2) -- (7-2);
                \draw (Center-2) -- (8-2);
                \draw (Center-2) -- (9-2);
                \draw (Center-2) -- (10-2);
                
                
                \node at (Center-1) [shift={(-0.35,0.5)}] {$\alpha$};
                \node at (Center-2) [shift={(0.35,0.5)}] {$\beta$};

                \draw (8-1) to [out=-72,in=-36] (9-1);
                \draw (10-1) to (5-2);
                
                \draw [dashed, red] (-0.2938-2, -0.4045) to [out=180+36+18, in = 300] (-1.175 - 2, -0.545) to [out=120, in=60+180] (-1.175 - 2, 0.4545) to [out = 60, in=180] (-0.1 - 2, 1.2) to [out=0, in=90] (0 - 2, 0.5);
                
            
            \end{tikzpicture}
            
            } & $\to$  & \raisebox{-.5\height}{
            
            \begin{tikzpicture}[scale=0.9]
            
            \coordinate (Center-1) at (0-2, 0);
            
            \coordinate (1-1) at (0.809-2, 0.588);
            \coordinate (2-1) at (0.309-2, 0.951);
            \coordinate (3-1) at (-0.309-2, 0.951);
            \coordinate (4-1) at (-0.809-2,0.588);
            \coordinate (5-1) at (-1-2,0);
            \coordinate (6-1) at (-0.809-2,-.588);
            \coordinate (7-1) at (-0.309-2,-.951);
            \coordinate (8-1) at (0.309-2,-.951);
            \coordinate (9-1) at (0.809-2,-.588);
            \coordinate (10-1) at (1-2,0);
            
            \coordinate (Center-2) at (0+2, 0);
            
            \coordinate (1-2) at (0.809+2, 0.588);
            \coordinate (2-2) at (0.309+2, 0.951);
            \coordinate (3-2) at (-0.309+2, 0.951);
            \coordinate (4-2) at (-0.809+2,0.588);
            \coordinate (5-2) at (-1+2,0);
            \coordinate (6-2) at (-0.809+2,-.588);
            \coordinate (7-2) at (-0.309+2,-.951);
            \coordinate (8-2) at (0.309+2,-.951);
            \coordinate (9-2) at (0.809+2,-.588);
            \coordinate (10-2) at (1+2,0);

            \draw (Center-1) -- (1-1);
            \draw (Center-1) -- (2-1);
            \draw (Center-1) -- (3-1);
            \draw (Center-1) -- (4-1);
            \draw (Center-1) -- (5-1);
            \draw (Center-1) -- (8-1);
            \draw (Center-1) -- (9-1);
            \draw (Center-1) -- (10-1);

            \draw (Center-2) -- (1-2);
            \draw (Center-2) -- (2-2);
            \draw (Center-2) -- (3-2);
            \draw (Center-2) -- (4-2);
            \draw (Center-2) -- (5-2);
            \draw (Center-2) -- (6-2);
            \draw (Center-2) -- (7-2);
            \draw (Center-2) -- (8-2);
            \draw (Center-2) -- (9-2);
            \draw (Center-2) -- (10-2);
            
            \node at (Center-1) [shift={(-0.35,0.5)}] {$\alpha$};
            \node at (Center-2) [shift={(0.35,0.5)}] {$\beta$};

            \draw (8-1) to [out=-72, in=-36] (9-1);
            \draw (10-1) to (5-2);
            
            \draw (6-1) to [out=36, in=0] (3*-0.2938-2, 3*-0.4045) to [out=180, in=60+180] (-1.175 - 2, 0.4545) to [out = 60, in=180] (-0.2 - 2, 1.2) to [out=0, in=100] (Center-1) to [out=100+180, in=-45] (-2-.1, -0.2) to [out=-45+180, in=250] (Center-1);
            
            \draw (7-1) to [out=36*2, in=0] (3*-0.3138-2, 3*-0.4545) to [out=180, in=60+180] (-1.375 - 2, 0.4545) to [out = 60, in=180] (0 - 2, 1.3) to [out=0, in=90] (Center-1) to [out=90+180, in=-45] (-2-.1 * 1.7, -0.2 * 1.7) to [out=-45+180, in=230] (Center-1);
            
            

            \draw[dashed, red] (-0.475 + 2, 0.1545) to [out=180-18, in=18] (Center-1) to [out=180+18, in=140] (-1.9, -1.2) to [out=-40, in=180+18] (-0.475 + 2, -0.1545);
            
            \end{tikzpicture}}
            \end{tabular}
            
            \vspace{5mm}
            
            \begin{tabular}{lll}
                \raisebox{-.5\height}{
            \begin{tikzpicture}[scale=0.9]
            
                \coordinate (Center-1) at (0-2, 0);
            
                \coordinate (1-1) at (0.809-2, 0.588);
                \coordinate (2-1) at (0.309-2, 0.951);
                \coordinate (3-1) at (-0.309-2, 0.951);
                \coordinate (4-1) at (-0.809-2,0.588);
                \coordinate (5-1) at (-1-2,0);
                \coordinate (6-1) at (-0.809-2,-.588);
                \coordinate (7-1) at (-0.309-2,-.951);
                \coordinate (8-1) at (0.309-2,-.951);
                \coordinate (9-1) at (0.809-2,-.588);
                \coordinate (10-1) at (1-2,0);
                
                \coordinate (Center-2) at (0+2, 0);
                
                \coordinate (1-2) at (0.809+2, 0.588);
                \coordinate (2-2) at (0.309+2, 0.951);
                \coordinate (3-2) at (-0.309+2, 0.951);
                \coordinate (4-2) at (-0.809+2,0.588);
                \coordinate (5-2) at (-1+2,0);
                \coordinate (6-2) at (-0.809+2,-.588);
                \coordinate (7-2) at (-0.309+2,-.951);
                \coordinate (8-2) at (0.309+2,-.951);
                \coordinate (9-2) at (0.809+2,-.588);
                \coordinate (10-2) at (1+2,0);

                \draw (Center-1) -- (1-1);
                \draw (Center-1) -- (2-1);
                \draw (Center-1) -- (3-1);
                \draw (Center-1) -- (4-1);
                \draw (Center-1) -- (5-1);
                \draw (Center-1) -- (6-1);
                \draw (Center-1) -- (7-1);
                \draw (Center-1) -- (8-1);
                \draw (Center-1) -- (9-1);
                \draw (Center-1) -- (10-1);

                \draw (Center-2) -- (1-2);
                \draw (Center-2) -- (2-2);
                \draw (Center-2) -- (3-2);
                \draw (Center-2) -- (4-2);
                \draw (Center-2) -- (5-2);
                \draw (Center-2) -- (6-2);
                \draw (Center-2) -- (7-2);
                \draw (Center-2) -- (8-2);
                \draw (Center-2) -- (9-2);
                \draw (Center-2) -- (10-2);
                
                
                \node at (Center-1) [shift={(-0.35,0.5)}] {$\alpha$};
                \node at (Center-2) [shift={(0.35,0.5)}] {$\beta$};

                \draw (8-1) to [out=-72,in=-36] (9-1);
                \draw (10-1) to (5-2);
                
                \draw [dashed, red] (-0.2938-2, -0.4045) to [out=180+36+18, in = 300] (-1.175 - 2, -0.545) to [out=120, in=60+180] (-1.175 - 2, 0.4545) to [out = 60, in=180] (-0.1 - 2, 1.2) to [out=0, in=30](1 - 2, 0.5) to [out=180+30, in=18](0.475 - 2, 0.1545);
                
            
            \end{tikzpicture}
            
            } & $\to$  & \raisebox{-.5\height}{
            
            \begin{tikzpicture}[scale=0.9]
            
                \coordinate (Center-1) at (0-2, 0);
            
            \coordinate (1-1) at (0.809-2, 0.588);
            \coordinate (2-1) at (0.309-2, 0.951);
            \coordinate (3-1) at (-0.309-2, 0.951);
            \coordinate (4-1) at (-0.809-2,0.588);
            \coordinate (5-1) at (-1-2,0);
            \coordinate (6-1) at (-0.809-2,-.588);
            \coordinate (7-1) at (-0.309-2,-.951);
            \coordinate (8-1) at (0.309-2,-.951);
            \coordinate (9-1) at (0.809-2,-.588);
            \coordinate (10-1) at (1-2,0);
            
            \coordinate (Center-2) at (0+2, 0);
            
            \coordinate (1-2) at (0.809+2, 0.588);
            \coordinate (2-2) at (0.309+2, 0.951);
            \coordinate (3-2) at (-0.309+2, 0.951);
            \coordinate (4-2) at (-0.809+2,0.588);
            \coordinate (5-2) at (-1+2,0);
            \coordinate (6-2) at (-0.809+2,-.588);
            \coordinate (7-2) at (-0.309+2,-.951);
            \coordinate (8-2) at (0.309+2,-.951);
            \coordinate (9-2) at (0.809+2,-.588);
            \coordinate (10-2) at (1+2,0);

            \draw (Center-1) -- (1-1);
            \draw (Center-1) -- (2-1);
            \draw (Center-1) -- (3-1);
            \draw (Center-1) -- (4-1);
            \draw (Center-1) -- (5-1);
            \draw (Center-1) -- (8-1);
            \draw (Center-1) -- (9-1);
            \draw (Center-1) -- (10-1);

            \draw (Center-2) -- (1-2);
            \draw (Center-2) -- (2-2);
            \draw (Center-2) -- (3-2);
            \draw (Center-2) -- (4-2);
            \draw (Center-2) -- (5-2);
            \draw (Center-2) -- (6-2);
            \draw (Center-2) -- (7-2);
            \draw (Center-2) -- (8-2);
            \draw (Center-2) -- (9-2);
            \draw (Center-2) -- (10-2);
            
            \node at (Center-1) [shift={(-0.35,0.5)}] {$\alpha$};
            \node at (Center-2) [shift={(0.35,0.5)}] {$\beta$};

            \draw (8-1) to [out=-72, in=-36] (9-1);
            \draw (10-1) to (5-2);

            \draw (6-1) to [out=36, in=0] (3*-0.2938-2, 3*-0.4045) to [out=180, in=60+180] (-1.175 - 2, 0.4545) to [out = 60, in=180] (-0.2 - 2, 1.2) to [out=0, in=30](1 - 2, 0.5) to [out=180+30, in=24](Center-1) to [out=24+180, in=-45] (-2-.1, -0.2) to [out=-45+180, in=180+36] (Center-1);
            
            \draw (7-1) to [out=36*2, in=0] (3*-0.3138-2, 3*-0.4545) to [out=180, in=60+180] (-1.375 - 2, 0.4545) to [out = 60, in=180] (0 - 2, 1.3) to [out=0, in=30] (1.2 - 2, 0.5) to [out=180+30, in=12](Center-1) to [out=12+180, in=180-45] (-2-.1 * 1.7, -0.2 * 1.7) to [out=-45, in=180+72] (Center-1);

            \draw[dashed, red] (-0.475 + 2, 0.1545) to [out=180-18, in=6] (Center-1) to [out=180+6, in=140] (-1.9, -1.2) to [out=-40, in=180+18] (-0.475 + 2, -0.1545);
            
            \end{tikzpicture}}
            \end{tabular}

            }
        \caption{\label{357crossingconnected} Transforming 5-crossings with a crossing segment between face 1 and faces 5 \& 7 into 7-crossings with an ACC.}
\end{figure}

Though we will not show the explicit cases, the same method from Lemma~\ref{first13salvage} can be used to prove two more lemmas: 

\begin{lemma}
Let $D$ be a minimal 5-crossing diagram of knot $K$. Suppose $D$ has crossing $\alpha$ containing at least one monogon. If there exists a crossing segment connecting face~$2$ and any face other than $4$ via $\alpha$, $c_{13}(K)<c_5(K)$.
\end{lemma}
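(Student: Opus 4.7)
The plan is to mirror the case analysis in Lemma~\ref{first13salvage}, now with face~$2$ playing the role that face~$1$ did there. Using the labeling of Figure~\ref{5onemonogonlabel}, face~$2$ shares strand~$7$ with face~$1$, shares strand~$6$ with face~$3$, and sits directly opposite face~$7$ across $\alpha$. Accordingly, if the given crossing segment connects face~$2$ to face~$1$ or face~$3$, the two faces are adjacent crossing connected via $\alpha$; if it connects face~$2$ to face~$7$, they are opposite crossing connected via $\alpha$. In either situation, since $D$ is a minimal $5$-crossing diagram, Corollary~\ref{accoccremove} applied with $n=5$ yields $c_9(K) < c_5(K)$, and combined with $c_{n+2}(K) \leq c_n(K)$ we obtain $c_{13}(K) \leq c_9(K) < c_5(K)$.

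The remaining cases are those in which the crossing segment ends at face~$5$, face~$6$, or face~$8$; face~$4$ is excluded by hypothesis, and face~$9$ is the monogon, which no crossing segment that avoids $\alpha$ can reach. For each of these three cases, I would perform the pull-strand construction used in Lemma~\ref{first13salvage} (depicted in Figures~\ref{onethreecrossingconnected} and~\ref{357crossingconnected}), applied now to the two strands of $\alpha$ that bound face~$2$, namely strands~$6$ and~$7$. Pulling these strands around $\alpha$ along the crossing segment turns $\alpha$ into a $7$-crossing and creates two faces that are ACC via some adjacent crossing~$\beta$. Applying the move of Figure~\ref{increasebytwofirsttime} to every remaining $5$-crossing then produces a $7$-crossing projection of $K$ with $c_5(K)$ crossings containing an ACC via $\beta$, so Theorem~\ref{accocctheorem} at $n=7$ yields a $13$-crossing diagram of $K$ with $c_5(K)-1$ crossings, whence $c_{13}(K) \leq c_5(K)-1 < c_5(K)$.

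The main obstacle is verifying the pull-strand construction in each of the three transform subcases. The routing of strands~$6$ and~$7$ along the crossing segment depends on the position of the target face ($5$, $6$, or~$8$), so in principle each subcase requires its own figure analogous to Figure~\ref{onethreecrossingconnected} or~\ref{357crossingconnected} in order to confirm that no spurious intersections are introduced and that the advertised ACC via~$\beta$ genuinely appears. However, because face~$2$ is simply face~$1$ shifted by one position around $\alpha$ away from the monogon, the three configurations here are formally symmetric to those already treated in Lemma~\ref{first13salvage}; no new ideas are needed, and in keeping with the paper's stated convention of not showing the explicit cases, the pictures may be omitted.
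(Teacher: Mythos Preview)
Your proposal is correct and follows essentially the same approach the paper indicates. The paper gives no detailed proof for this lemma, stating only that ``the same method from Lemma~\ref{first13salvage} can be used''; your case split into ACC (faces~1,~3), OCC (face~7), and the three strand-pull cases (faces~5,~6,~8) is exactly the expected instantiation of that method, and your invocation of Theorem~\ref{accocctheorem} rather than Corollary~\ref{accoccremove} for the constructed (non-minimal) 7-crossing diagram is in fact slightly more careful than the paper's own phrasing in the face~1 case.
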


\begin{lemma}
Let $D$ be a minimal 5-crossing diagram of knot $K$. Suppose $D$ has crossing $\alpha$ containing at least one monogon. If there exists a crossing segment connecting face~$3$ and any face other than $5$ via $\alpha$, $c_{13}(K)<c_5(K)$.
\end{lemma}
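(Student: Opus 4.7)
The plan is to mirror the proof of Lemma~\ref{first13salvage}, now with face~$3$ playing the role of face~$1$. Using the labeling of Figure~\ref{5onemonogonlabel}, face~$3$ is bounded by strands~$5$ and~$6$ of $\alpha$, so it shares strand~$5$ with face~$4$ and strand~$6$ with face~$2$, while it sits diametrically across $\alpha$ from face~$8$ (face~$3$ is at angle~$162^\circ$ and face~$8$ at angle~$342^\circ$).

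First I would dispose of the targets yielding direct ACC or OCC relations. A crossing segment from face~$3$ to face~$2$ or to face~$4$ witnesses an ACC via $\alpha$, and a crossing segment from face~$3$ to face~$8$ witnesses an OCC via $\alpha$. In each of these cases Corollary~\ref{accoccremove} gives $c_9(K)<c_5(K)$, and combined with the inequality $c_{n+2}(K)\leq c_n(K)$ this already yields $c_{13}(K)<c_5(K)$.

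For each remaining target---faces~$1$, $6$, $7$, and the unlabeled face lying between strands~$8$ and~$9$ (adjacent to the monogon)---I would adapt the strand-wrapping construction used in Figures~\ref{onethreecrossingconnected} and~\ref{357crossingconnected}. Given the crossing segment from face~$3$ to the target, I would pull the strands of $\alpha$ lying on the monogon side of the segment, together with the monogon loop, around $\alpha$ along the segment. This converts $\alpha$ into a $7$-crossing while leaving the diagram a projection of $K$, and it arranges the result so that two faces touching the transformed $\alpha$ are ACC via an adjacent crossing~$\beta$. Applying the move of Figure~\ref{increasebytwofirsttime} to every other $5$-crossing then gives a $7$-crossing projection of~$K$ with $c_5(K)$ crossings that contains an ACC, so Corollary~\ref{accoccremove} delivers $c_{13}(K)<c_7(K)\leq c_5(K)$. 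Note that the monogon face~$9$ itself is never a valid target, since a crossing segment ending inside it would have to cross the monogon's arc away from any crossing.

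The main obstacle will be the geometric bookkeeping in each of these diagonal cases: one must verify that the chosen bundle of strands wraps consistently without self-intersection and that the transformed $\alpha$ actually produces the required ACC at some neighboring crossing. Face~$5$ is omitted from the statement because the segment from face~$3$ to face~$5$ separates $\alpha$'s strands together with the monogon in such a way that no consistent choice of bundle yields an ACC in the resulting $7$-crossing, exactly paralleling the exclusion of face~$4$ in Lemma~\ref{first13salvage}.
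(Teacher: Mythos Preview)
Your plan is exactly the approach the paper intends: it states this lemma without a separate proof, saying only that ``the same method from Lemma~\ref{first13salvage} can be used,'' and your case split (ACC for $3$--$2$ and $3$--$4$, OCC for $3$--$8$, strand-wrap to a $7$-crossing with an induced ACC for the remaining diagonal targets) is precisely that method. Your observations that the monogon face is never a valid endpoint for a crossing segment via~$\alpha$, and that the sector between face~$1$ and the monogon must also be handled as a diagonal case, are correct refinements that the paper leaves implicit.
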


\begin{remark}
By symmetry, the above lemmas hold for faces 5, 6, and 7. Of the 28 possible crossing segments via $\alpha$, Table~\ref{cstable} categorizes the 6 that do not directly imply $c_{13}(K)<c_5(K)$.
\end{remark}

\begin{table}[! h]
\centering
\caption{Crossing segments via a crossing with a monogon that don't directly imply $c_{13}(K)<c_5(K)$}\label{cstable}
\vspace{4mm}
\begin{tabular}{l|l}
\textbf{First Face} & \textbf{Second Face} \\ \hline

3                   & 5                                       \\ \hline
4                   & 1                                 \\ \hline
4                   & 2                                    \\ \hline
4                   & 6                                   \\ \hline
4                   & 7                                     \\ \hline
4                   & 8                                                \\ 
\end{tabular}
\end{table}

The criteria are less strict for crossings with two almost opposite monogons. There are two ways to notate the faces around a crossing, each with respect to one of the monogons. The only crossing segment that fails using both notations is the crossing segement connecting face 4 and 8. So if any other pair of faces are crossing connected via a crossing with two monogons, $c_{13}(K)<c_5(K)$.\\

It is important to note that aside from face 4, the existence of a crossing segment between faces 1, 2, 6, 7, or 8 and any other face implies $c_{13}(K)<c_5(K)$. Future techniques to prove Conjecture \ref{13ineqconj} could attempt to ``cross out" faces that force a path to travel to a different crossing's face 4. It can already be proved that every 5-crossing diagram must contain two crossing connected faces. So, improving this counting technique to find two crossing connected faces where one of the faces is face 1, 2, 6, 7, or 8 via a crossing with a monogon is satisfactory to prove Conjecture \ref{13ineqconj} for all knots $K$ that contain a minimal 5-crossing projection with at least one monogon. Lastly, extensive work has been put towards finding a counter-example, yet none has been found. All minimal 5-crossing diagrams examined either have a monogon and fall into one of the above categories or contain two OCC faces. It seems likely that Conjecture $\ref{13ineqconj}$ is true. 

\section*{Acknowledgments}
I would like to thank Neel Kolhe, Chengze Li, Sophia Liao, and Eric Tang for helping to initiate this research. Many thanks to Justin Wu for initiating my study of knot theory. I am especially grateful to Colin Adams for advising this research project and first introducing me to $n$-crossing knot diagrams.

\end{document}